\tikzstyle{bag} = [align=center]
\tikzset{cross/.style={cross out, draw=black, minimum size=2*(#1-\pgflinewidth), inner sep=0pt, outer sep=0pt},
	%default radius will be 1pt. 
	cross/.default={1pt}}
\pgfplotsset{compat = newest}
\newcommand{\logLogSlopeTriangleUpsideDown}[6]
{
	% #1. Relative offset in x direction.
	% #2. Width in x direction, so xA-xB.
	% #3. Relative offset in y direction.
	% #4. Slope d(y)/d(log10(x)).
	% #5. Plot options.
	
	\pgfplotsextra
	{
		\pgfkeysgetvalue{/pgfplots/xmin}{\xmin}
		\pgfkeysgetvalue{/pgfplots/xmax}{\xmax}
		\pgfkeysgetvalue{/pgfplots/ymin}{\ymin}
		\pgfkeysgetvalue{/pgfplots/ymax}{\ymax}
		
		% Calculate auxilliary quantities, in relative sense.
		\pgfmathsetmacro{\xArel}{#1}
		\pgfmathsetmacro{\yArel}{#3}
		\pgfmathsetmacro{\xBrel}{#1-#2}
		\pgfmathsetmacro{\yBrel}{\yArel}
		\pgfmathsetmacro{\xCrel}{\xArel}
		%\pgfmathsetmacro{\yCrel}{ln(\yC/exp(\ymin))/ln(exp(\ymax)/exp(\ymin))} % REPLACE THIS EXPRESSION WITH AN EXPRESSION INDEPENDENT OF \yC TO PREVENT THE 'DIMENSION TOO LARGE' ERROR.
		
		\pgfmathsetmacro{\lnxB}{\xmin*(1-(#1-#2))+\xmax*(#1-#2)} % in [xmin,xmax].
		\pgfmathsetmacro{\lnxA}{\xmin*(1-#1)+\xmax*#1} % in [xmin,xmax].
		\pgfmathsetmacro{\lnyA}{\ymin*(1-#3)+\ymax*#3} % in [ymin,ymax].
		\pgfmathsetmacro{\lnyC}{\lnyA-#4*(\lnxA-\lnxB)}
		\pgfmathsetmacro{\yCrel}{\lnyC-\ymin)/(\ymax-\ymin)} % THE IMPROVED EXPRESSION WITHOUT 'DIMENSION TOO LARGE' ERROR.
		
		% Define coordinates for \draw. MIND THE 'rel axis cs' as opposed to the 'axis cs'.
		\coordinate (A) at (rel axis cs:\xArel,\yArel);
		\coordinate (B) at (rel axis cs:\xBrel,\yBrel);
		\coordinate (C) at (rel axis cs:\xCrel,\yCrel);
		
		% Draw slope triangle.
		\draw[#5]   (A)-- node[pos=0.5,anchor=south] {1}
		(B)-- 
		(C)-- node[pos=0.5,anchor=west] {#6}
		cycle;
	}
}
\newcommand{\logLogSlopeTriangle}[6]
{
	% #1. Relative offset in x direction.
	% #2. Width in x direction, so xA-xB.
	% #3. Relative offset in y direction.
	% #4. Slope d(y)/d(log10(x)).
	% #5. Plot options.
	
	\pgfplotsextra
	{
		\pgfkeysgetvalue{/pgfplots/xmin}{\xmin}
		\pgfkeysgetvalue{/pgfplots/xmax}{\xmax}
		\pgfkeysgetvalue{/pgfplots/ymin}{\ymin}
		\pgfkeysgetvalue{/pgfplots/ymax}{\ymax}
		
		% Calculate auxilliary quantities, in relative sense.
		\pgfmathsetmacro{\xArel}{#1}
		\pgfmathsetmacro{\yArel}{#3}
		\pgfmathsetmacro{\xBrel}{#1+#2}
		\pgfmathsetmacro{\yBrel}{\yArel}
		\pgfmathsetmacro{\xCrel}{\xArel}
		%\pgfmathsetmacro{\yCrel}{ln(\yC/exp(\ymin))/ln(exp(\ymax)/exp(\ymin))} % REPLACE THIS EXPRESSION WITH AN EXPRESSION INDEPENDENT OF \yC TO PREVENT THE 'DIMENSION TOO LARGE' ERROR.
		
		\pgfmathsetmacro{\lnxB}{\xmin*(1-(#1-#2))+\xmax*(#1-#2)} % in [xmin,xmax].
		\pgfmathsetmacro{\lnxA}{\xmin*(1-#1)+\xmax*#1} % in [xmin,xmax].
		\pgfmathsetmacro{\lnyA}{\ymin*(1-#3)+\ymax*#3} % in [ymin,ymax].
		\pgfmathsetmacro{\lnyC}{\lnyA+#4*(\lnxA-\lnxB)}
		\pgfmathsetmacro{\yCrel}{\lnyC-\ymin)/(\ymax-\ymin)} % THE IMPROVED EXPRESSION WITHOUT 'DIMENSION TOO LARGE' ERROR.
		
		% Define coordinates for \draw. MIND THE 'rel axis cs' as opposed to the 'axis cs'.
		\coordinate (A) at (rel axis cs:\xArel,\yArel);
		\coordinate (B) at (rel axis cs:\xBrel,\yBrel);
		\coordinate (C) at (rel axis cs:\xCrel,\yCrel);
		
		% Draw slope triangle.
		\draw[#5]   (A)-- node[pos=0.5,anchor=north] {1}
		(B)-- 
		(C)-- node[pos=0.5,anchor=east] {#6}
		cycle;
	}
}
\newcommand{\N}{{\mathbb N}}
\newcommand{\Q}{{\mathbb Q}}
\newcommand{\R}{{\mathbb R}}
\newcommand{\uproman}[1]{\uppercase\expandafter{\romannumeral#1}}
\newcommand{\ba}[1]{\begin{array}{#1}}     %Array
\newcommand{\ea}[0]{\end{array}}     
\newcommand{\bma}[1]{\left[\begin{array}{#1}}     %Matrix
\newcommand{\ema}[0]{\end{array}\right]}     
\newcommand{\beq}[0]{\begin{equation}}     %eine Zeile nummeriert
\newcommand{\eeq}[0]{\end{equation}}
\newcommand{\beqn}[0]{\begin{eqnarray}}    %nummeriertes   Gl.syst.
\newcommand{\eeqn}[0]{\end{eqnarray}}
\newcommand{\beqk}[0]{\begin{displaymath}} %eine Zeile unnummeriert
\newcommand{\eeqk}[0]{\end{displaymath}}
\newcommand{\beqnk}[0]{\begin{eqnarray*}}  %unnummeriertes Gl.syst.
\newcommand{\eeqnk}[0]{\end{eqnarray*}}
\newcommand{\bseqn}[0]{\begin{subequations}}   % amsmath - subequations
\newcommand{\eseqn}[0]{\end{subequations}}     
\newcommand{\bsplit}[0]{\begin{split}}   % amsmath - split; endsplit geht nicht
\def\.{\cdot}
\newcommand{\dd}[2]{\frac{\partial #1}{\partial #2}}
\newcommand{\ddd}[2]{\frac{\partial^2 #1}{\partial #2 ^2}}
\newcommand{\DD}[2]{\frac{d #1}{d #2}}
\newcommand{\DDD}[2]{\frac{d^2 #1}{d #2 ^2}}
\newcommand{\bs}{\boldsymbol}
\newcommand{\abs}[1]{\left\lvert#1\right\rvert}
\newcommand{\norm}[1]{\left\lVert#1\right\rVert}
\newcommand{\vphi}{\varphi}
\newtheorem{lemma}{Lemma}
\theoremstyle{remark}
\newtheorem{remark}{Remark}
\theoremstyle{definition}
\theoremstyle{definition}
\newtheorem{example}{Example}[section]
\numberwithin{equation}{section}
\journal{Elsevier}
\begin{document}

\begin{frontmatter}

%% Title, authors and addresses

%% use the tnoteref command within \title for footnotes;
%% use the tnotetext command for theassociated footnote;
%% use the fnref command within \author or \address for footnotes;
%% use the fntext command for theassociated footnote;
%% use the corref command within \author for corresponding author footnotes;
%% use the cortext command for theassociated footnote;
%% use the ead command for the email address,
%% and the form \ead[url] for the home page:
%% \title{Title\tnoteref{label1}}
%% \tnotetext[label1]{}
%% \author{Name\corref{cor1}\fnref{label2}}
%% \ead{email address}
%% \ead[url]{home page}
%% \fntext[label2]{}
%% \cortext[cor1]{}
%% \affiliation{organization={},
%%             addressline={},
%%             city={},
%%             postcode={},
%%             state={},
%%             country={}}
%% \fntext[label3]{}

\title{Isogeometric analysis of the Laplace eigenvalue problem on circular sectors: Regularity properties, graded meshes \& variational crimes}

% use optional labels to link authors explicitly to addresses:
\author[unibw]{Thomas Apel}
\ead{thomas.apel@unibw.de}
 
\author[unibw]{Philipp Zilk\corref{cor1}}
\ead{philipp.zilk@unibw.de}
 
\affiliation[unibw]{organization={Institute for Mathematics and Computer-Based Simulation, Universität der Bundeswehr München},
             addressline={\\Werner-Heisenberg-Weg 39},
             city={Neubiberg},
             postcode={85577},
%             state={},
             country={Germany}}

\cortext[cor1]{Corresponding author}

% \affiliation[label2]{organization={},
%             addressline={},
%             city={},
%             postcode={},
%             state={},
%             country={}}
%
%\affiliation{organization={},%Department and Organization
%            addressline={}, 
%            city={},
%            postcode={}, 
%            state={},
%            country={}}

\begin{abstract}
The Laplace eigenvalue problem on circular sectors has eigenfunctions with corner singularities. Standard methods may produce suboptimal approximation results. To address this issue, a novel numerical algorithm that enhances standard isogeometric analysis is proposed in this paper by using a single-patch graded mesh refinement scheme. Numerical tests demonstrate optimal convergence rates for both the eigenvalues and eigenfunctions. Furthermore, the results show that smooth splines possess a superior approximation constant compared to their $C^0$-continuous counterparts for the lower part of the Laplace spectrum. This is an extension of previous findings about excellent spectral approximation properties of smooth splines on rectangular domains to circular sectors. In addition, graded meshes prove to be particularly advantageous for an accurate approximation of a limited number of eigenvalues. The novel algorithm applied here has a drawback in the singularity of the isogeometric parameterization. It results in some basis functions not belonging to the solution space of the corresponding weak problem, which is considered a variational crime. However, the approach proves to be robust. Finally, a hierarchical mesh structure is presented to avoid anisotropic elements, omit redundant degrees of freedom and keep the number of basis functions contributing to the variational crime constant, independent of the mesh size. Numerical results validate the effectiveness of hierarchical mesh grading for the simulation of eigenfunctions with and without corner singularities.
\end{abstract}

%%Graphical abstract
%\begin{graphicalabstract}
%%\includegraphics{grabs}
%\end{graphicalabstract}

%%Research highlights
%\begin{highlights}
%\item Research highlight 1
%\item Research highlight 2
%\end{highlights}

\begin{keyword}
Isogeometric analysis \sep Eigenvalue problems \sep Spectral approximation properties \sep Circular sectors \sep Corner singularities \sep Local mesh refinement \sep Graded meshes \sep Hierarchical refinement \sep Singular parameterizations \sep Variational crimes
%% keywords here, in the form: keyword \sep keyword

%% PACS codes here, in the form: \PACS code \sep code

%% MSC codes here, in the form: 
\MSC[2010] 65N25 \sep 65D07 \sep 65N50 \sep 35A21 \sep 35C05
%% or \MSC[2008] code \sep code (2000 is the default)

\end{keyword}

\end{frontmatter}

%Führe aus mit pdflatex, biblatex, pdflatex, siehe https://tex.stackexchange.com/questions/632150/error-cannot-find-elsarticle-template-bcf-when-compiling-the-elsevier-templa

%TODO Bearbeite .bib - Datei mit nur nötigen Informationen	

%% \linenumbers

%% main text
\section{Introduction}
\label{sec: introduction}
The Laplace eigenvalue problem
\begin{align*}
- \Delta u = \lambda u \quad \text{ in } \Omega
\end{align*}
on arbitrary domains $\Omega \subset \R^d$ is of great significance in numerous areas of applied mathematics. Not only is the Laplace spectrum itself crucial for many applications, e.g., due to the geometric information it contains \cite{Kac1966,ReuterWolterPeinecke2006}, but it also appears during the analysis of other partial differential equations which describe everyday physical processes. For instance, the wave equation in two spatial dimensions describes the vibration of a membrane and its analytical solution can be derived by applying separation of variables and then solving the resulting Laplace eigenvalue problem \cite{BabuskaOsborn1991}. For some one-, two- or three-dimensional model domains such as lines, rectangles, circular sectors or balls, the exact Laplace eigenvalues and eigenfunctions have been determined in the literature \cite{Clebsch1862,Rayleigh1945,Strauss2008}. Yet, for more general domains, an analytical solution is usually not known and thus numerical methods are needed.

Isogeometric analysis (IGA) has been proven to provide powerful tools for an excellent approximation of the Laplace eigenvalues, especially in one dimension and for two-dimensional domains of rectangular nature \cite{CottrellRealiBazilevsHughes2006,HughesRealiSangalli2008,CottrellHughesBazilevs2009,CottrellHughesReali2007,HughesEvansReali2014,SandeManniSpeleers2019,SandeManniSpeleers2020,HiemstraHughesRealiSchillinger2021,ManniSandeSpeleers2022}.
However, the Laplace eigenfunctions of the underlying domains are always smooth and the question arises whether the spectrum of more complex domains, particularly in the presence of singularities and non-smooth eigenfunctions, can also be approximated well with IGA. For this purpose, circular sectors serve as a perfect model domain as they contain a typical corner singularity in the center and exact solutions of the Laplace eigenvalue problem are known such that spectral approximation properties can be verified numerically. In addition, one of the initial motivations of IGA can be explored during the discretization process: the exact representation of the computational domain geometry through the use of NURBS basis functions \cite{HughesCottrellBazilevs2005}. In this way, circular sectors can be parameterized exactly, which is not possible with classical finite elements. 

%For all these reasons, we consider the Laplace eigenvalue problem on circular sectors and its isogeometric approximation in extensive detail in this contribution.

Certainly, the corner singularity of circular sectors needs to be addressed during the numerical solution process to obtain a proper approximation of the Laplace spectrum. Many different approaches have been proposed in the literature and can be grouped into two categories. Either the mesh is locally refined towards the singularity or the approximation space is enriched by singular functions \cite{LiLu2000}. Sometimes, both ideas are combined. Within the framework of IGA, a method of the first category has been contributed where the parametric mapping is modified to obtain a grading of the mesh \cite{JeongOhKangKim2013}. In a related work, which belongs to the first and second category, this idea has been combined with an enrichment of the basis by singular functions \cite{OhKimJeong2014}. Another method of the first category is graded CutIGA where computational background meshes are cut arbitrarily to represent the singularity while retaining optimal approximation results \cite{JonssonLarsonLarsson2019}. Further possibilities include adaptive approaches or trimmed methods, both of which have gained significant attention in the field of isogeometric analysis recently, see for instance the overviews works \cite{BuffaGantnerGiannelliPraetoriusVazquez2022,MarussigHughes2018}. 

The existing results about powerful spectral approximation properties have been achieved using standard single-patch IGA or very closely related methods. Therefore, we aim for an approach that is as close as possible to the standard method to maintain comparability between our results and the existing ones. In this context, we decide to use a method of the first category and we prefer to work with an exact representation of circular sectors instead of a trimmed approach. Since we consider a model problem where the location of the singularity is known, the local refinement can be set up a priori and there is no need for adaptive strategies which require more computational effort.

From finite element methods, it is well known that mesh grading is a simple and powerful tool for local a priori refinement towards corner singularities \cite{ApelSaendigWhiteman1996,ApelNicaise1998,Babuska1970,OganesjanRukhovets1968}. This idea has already been transferred to IGA by using a multi-patch approach \cite{LangerMantzaflarisMooreToulopoulos2015}. However, the authors require the isogeometric mapping to be smooth, at least in the points towards which the mesh is locally refined. We contribute a new approach to overcome this restriction by using the single-patch polar-like parameterization of circular sectors which is singular at the conical point of the circular sector. Due to the singular parameterization, a few basis functions that span the standard isogeometric approximation space are singular \cite{TakacsJuettler2011,TakacsJuettler2012,Takacs2015}. Approximation properties in such cases have so far only been shown for smooth functions on singularly parameterized triangles in \cite{Takacs2015preprint}. Our contribution presents a numerical extension of this work to singularly parameterized circular sectors for a specific class of functions. Precisely, we consider numerical approximation properties for the set of eigenfunctions which possibly contains both smooth and singular functions depending on the angle of the circular sector.

The outline of this paper is as follows. In Section \ref{section: laplace evp on circular sectors} we introduce the model problem, derive an analytical solution and investigate crucial regularity properties of the eigenfunctions. Section \ref{section: preliminaries and notation} contains a short overview about the basics of splines and NURBS and the corresponding notation which is then used in Section \ref{section: Isogeometric mesh grading} to explain single-patch isogeometric mesh grading for circular sectors. In Section \ref{section: Numerical results} we provide numerical results showing that the proposed method guarantees optimal convergence rates for the Laplace eigenpairs and hence is a powerful approach to solve the considered model problem. Moreover, we demonstrate that smooth splines are particularly useful on graded meshes for computing multiple eigenvalues and consider a combination with a hierarchical refinement scheme. In Section \ref{section: Conclusion}, we finally conclude our main findings and list some ideas for further research.

In the sequel, the symbol $C$ is used for a generic positive constant, which may be different at each occurrence.

\section{The Laplace eigenvalue problem on circular sectors}
\label{section: laplace evp on circular sectors}
The main subject of this paper is the Laplace eigenvalue problem on circular sectors. Therefore, we provide the fundamental equations and boundary conditions, explain how an analytical solution can be derived and point out key regularity properties of the resulting eigenfunctions in this section. Furthermore, we introduce the weak form and the discrete form of the model problem which are essential for the numerical solution process with IGA.

\subsection{The model problem}
We consider the Laplace eigenvalue problem
\begin{alignat}{2}
-\Delta u &= \lambda u \qquad &&\text{in\ }\Omega\, , \nonumber \\
u &= 0 \qquad &&\text{on\ }\Gamma_D\, ,  \label{eq: model problem}\\
\frac{\partial u}{\partial n}&=0  \qquad && \text{on\ }\Gamma_N\,,  \nonumber
\end{alignat}
where the model domain $\Omega = \left\{(r\cos\varphi, r\sin\varphi) \in \R^2: 0<r<1, 0<\varphi<\omega\right\}$ is a circular sector of angle $\omega \in(0, 2\pi]$. The Dirichlet boundary  $\Gamma_D = \{(\cos \varphi, \sin \varphi): 0 \leq \varphi \leq \omega\}$ consists of the circular edge and the Neumann boundary is given by the angle legs, $\Gamma_N = \{(r \cos \varphi, r \sin \varphi): 0 \leq r < 1, \varphi \in \{0,\omega\}\}$. We illustrate the model domain and its boundary for $\omega = \frac{3}{2}\pi$ and $\omega = 2\pi$ in Figure \ref{fig: sketch of model domain}. 

From a physical point of view, the model problem \eqref{eq: model problem} describes the vibrations of a membrane stretched over a circular sector. The membrane is assumed to be fixed at the circular Dirichlet boundary $\Gamma_D$. For $\omega = 2\pi$, we can think of a circular drum with a straight crack which is represented by the two angle legs. In this context, it is natural to choose Neumann boundary conditions on $\Gamma_N$. For the sake of simplicity, we stick to this choice of boundary conditions and do not discuss further combinations although our choice is not essential for the main findings of this paper.

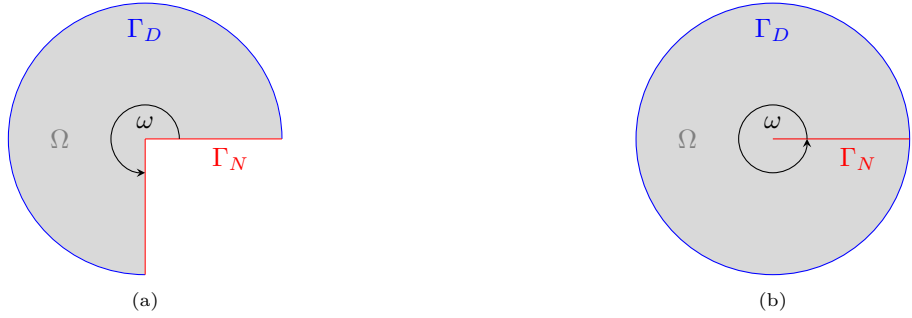
\begin{figure}
	\begin{center}
		\begin{subfigure}{0.495\textwidth}
			\begin{center}
				\begin{tikzpicture}[scale=0.9]	
				\filldraw[fill = gray, fill opacity = 0.3, draw = white] (2,0) arc (0:270:2) -- (0,0) -- cycle;
				\draw[blue] (2,0) arc (0:270:2);	
				\draw[red] (0,0) -- (2, 0);	
				\draw[red] (0,0) -- (0, -2);
				
				\draw[-stealth] (0.1:0.5) arc (0:270:0.5);
				\node at (90:0.25) {$\omega$};	
				\node[gray] at (180:1.25) {$\Omega$};
				\node[blue] at (90:1.6) {$\Gamma_D $};
				\node[red] at (1.25,-0.3) {$\Gamma_N $};
				\end{tikzpicture}
				\caption{}	
				\label{fig: sketch of model domain a}
			\end{center}
		\end{subfigure}
		\begin{subfigure}{0.495\textwidth}
			\begin{center}
				\begin{tikzpicture}[scale=0.9]	
				\draw[blue, fill = gray, fill opacity = 0.3] (0,0) circle (2);	
				\draw[red] (0,0) -- (2, 0);	
				\draw[-stealth] (0.1:0.5) arc (0:360:0.5);
				\node at (90:0.25) {$\omega$};	
				\node[gray] at (180:1.25) {$\Omega$};
				\node[blue] at (90:1.6) {$\Gamma_D $};
				\node[red] at (1.25,-0.3) {$\Gamma_N $};
				\end{tikzpicture}
				\caption{}	
				\label{fig: sketch of model domain b}
			\end{center}
		\end{subfigure}
	\end{center}
	\vspace{-5mm}
	\caption{Circular sectors $\Omega$ with angle $\omega$ and the corresponding boundaries. (a) $\omega = \frac{3}{2} \pi$. (b) $\omega=2\pi$.}
	\label{fig: sketch of model domain}
\end{figure}

\subsection{Analytical solution}
\label{subsec: analytical solution}
The analytical solution of our model problem \eqref{eq: model problem} is well known and there is a vast amount of literature about it. According to Rayleigh, the theory of vibrations of a circular membrane has first been introduced by Clebsch \cite{Clebsch1862,Rayleigh1945}. Later, Rayleigh himself deduced the eigenmodes of circular sectors. We can not provide an exhaustive list here, but mention a few other works in which the problem has also been discussed \cite{KuttlerSigillito1984,GrebenkovNguyen2013}. In this section, we recapitulate the main ideas to derive the exact Laplace eigenfunctions and eigenvalues of circular sectors. We adopt the notation used in the book \cite[Chapter 10.2]{Strauss2008}, where the vibrations of a circular membrane are considered in detail. First, we write down the equations \eqref{eq: model problem} in polar coordinates,
\begin{alignat*}{2}
-\frac{1}{r}\dd{u}{r} - \ddd{u}{r} - \frac{1}{r^2} \ddd{u}{\varphi} &= \lambda u \qquad &&\text{in } \Omega\, , \\
u &= 0 \qquad && \text{on } \Gamma_D \, , \\
\frac{\partial u}{\partial \varphi}&=0  \qquad && \text{on\ }\Gamma_N\, .
\end{alignat*}
Introducing the separation ansatz $u(r, \varphi) = R(r)\Phi(\vphi)$, we obtain
\begin{align*}
\frac{r^2 R''(r) +r R'(r)}{R(r)} + r^2 \lambda = - \frac{\Phi''(\varphi)}{\Phi(\varphi)} 
\end{align*}
and thus there is a $\sigma \in \R$ such that, by also considering the boundary conditions,
\begin{align}
r^2 R''(r) +r R'(r) + (\lambda r^2 - \sigma) R(r) = 0\, , \qquad R(1) = 0 
\label{EquationForR}
\end{align}
and
\begin{align}
\Phi''(\varphi) + \sigma \, \Phi(\varphi) = 0 \, , \qquad \Phi'(0) = \Phi'(\omega) = 0 \, .
\label{EquationForPhi}
\end{align}
The eigenvalue problem \eqref{EquationForPhi} has the solutions
\begin{align*}
\sigma_k = \nu_k^2 \, \quad \text{ and } \quad
\Phi_k(\varphi) = a_k \cos( \nu_k \, \varphi)  \quad \text{ with } \nu_k = k \, \frac{\pi}{\omega} \text{ and } a_k \in \R \text{ for } k \in \N_0 \, .
\end{align*}  
Solving the boundary value problem \eqref{EquationForR} for $R$ is a bit more technical. We know that the model problem \eqref{eq: model problem} only has positive eigenvalues, see for instance \cite[Chapter 10.1]{Strauss2008}. We substitute $\rho(r):= r \sqrt{\lambda}$, define $\tilde{R} (\rho(r)) : = R\left( \frac{\rho(r)}{\sqrt{\lambda}} \right) = R(r)$ and compute the relations
\begin{align*}
\DD{}{r} R(r) = \DD{}{r} \tilde{R} (\rho(r)) = \DD{}{\rho} \tilde{R} (\rho(r)) \. \sqrt{\lambda} \, ,  \\
\DDD{}{r}R(r) = \DDD{}{r}\tilde{R} (\rho(r)) = \DDD{}{\rho} \tilde{R} (\rho(r)) \. \lambda \, . 
\end{align*}
Inserting both expressions above into equation \eqref{EquationForR} leads to the Bessel differential equation
\begin{align}
\rho^2 \DDD{\tilde{R}}{\rho} + \rho \DD{\tilde{R}}{\rho} + (\rho^2-\nu_k^2) \tilde{R} = 0 \, .
\label{eq: Bessel's equation}
\end{align}
In general, the Bessel equation has solutions of the form $\tilde{R}(\rho) = \alpha J_{\nu_k}(\rho) + \beta Y_{\nu_k}(\rho)$, where $J_{\nu_k}$ and $Y_{\nu_k}$ are Bessel functions to the order $\nu_k$ of the first and second kind, respectively, and $\alpha,\beta \in \R$. However, the Bessel functions of the second kind have a pole at $\rho = 0$ and therefore result in unphysical solutions, i.e., it is $\beta=0$. Without loss of generality, we set $\alpha = 1$ and obtain the relevant solution of equation \eqref{eq: Bessel's equation} by
\begin{align*}
\tilde{R} (\rho(r)) = J_{\nu_k}(\rho(r)) = \sum_{j=0}^\infty \frac{(-1)^j}{j! \, \Gamma(\nu_k+j+1)} \left(\frac{\rho(r)}{2}\right)^{2j+ \nu_k} \,,
\end{align*} 
where $\Gamma$ denotes the Gamma function. Hence, the function
\begin{align*}
R(r) =  \tilde{R} (\rho(r)) = J_{\nu_k}(\rho(r)) = J_{\nu_k}(\mu \, r)  \quad \text{ with } \mu = \sqrt{\lambda} \text{ and } \nu_k =  k \frac{\pi}{\omega} \text{ for } k \in \N_0  
\end{align*}
solves the equation initially considered in problem \eqref{EquationForR}. Since we also require homogeneous Dirichlet boundary conditions on the circular boundary, that is, $R(1) = 0$, we obtain the condition $J_{\nu_k}(\mu) = 0$. Thus, $\mu$ has to be a root of the Bessel function $J_{\nu_k}$. We denote the $m$-th root of $J_{\nu_k}$ by $\mu_{\nu_k,m}$ for $m\in \N$ which enables us to express the exact eigenfunctions in polar coordinates as
\begin{align}
u_{\nu_k,m}(r, \varphi) = a_{\nu_k,m} \, J_{\nu_k}(\mu_{\nu_k,m} \, r) \cos(\nu_k \, \varphi)
\label{eq: formula eigenfunctions}
\end{align}
with $a_{\nu_k,m} \in \R $ and $\nu_k = k \frac{\pi}{\omega}$ for $k \in \N_0$. Throughout the paper, we set $a_{\nu_k,m}=1$ for all $k \in \N_0$ and $m \in \N$ for the sake of simplicity and we call the functions \eqref{eq: formula eigenfunctions} the Laplace eigenfunctions or eigenmodes of circular sectors. The corresponding exact eigenvalues and eigenfrequencies are then given by
\begin{align}
\lambda_{\nu_k,m} = \mu_{\nu_k,m}^2\, \quad \text{ and } \quad \omega_{\nu_k,m} = \sqrt{\lambda_{\nu_k,m}} = \mu_{\nu_k,m}
\label{eq: formula eigenvalues}
\end{align}
and will be called the Laplace eigenvalues and eigenfrequencies of circular sectors, respectively. Ordering the eigenvalues and eigenfrequencies in ascending order, we denote them by $\lambda_n$ and $\omega_n = \sqrt{\lambda_n} $ for $n = 1,2, \dots$, respectively. The zeros of Bessel functions and their asymptotic behavior have been investigated extensively in the literature. For example, from \cite[Chapter 9.5]{AbramowitzStegun1988}, we know that
\begin{align*}
\mu_{\nu_k,1} < \mu_{\nu_k+1,1} < \mu_{\nu_k,2} < \mu_{\nu_k+1,2} < \mu_{\nu_k,3} < \ldots  \quad \text{ for any } k \in \N_0 \,.
\end{align*}
In \cite{GrebenkovNguyen2013}, the authors provide a compact summary of two important asymptotic formulas. For fixed $\nu_k$ and large $m$, McMahon's expansion, $\mu_{\nu_k,m} \sim \pi (m+\nu_k/2-1/4)) + \mathcal{O}(m^{-1})$, holds \cite{Watson1995}. For fixed $m$ and large $\nu_k$, Olver's expansion, $\mu_{\nu_k,m} \sim \nu_k + \delta_m \nu_k^{1/3} + \mathcal{O}(\nu_k^{-1/3})$, holds with known coefficients $\delta_m$ \cite{Elbert2001,Olver1951,Olver1952}. Hence, it is
\begin{align*}
\mu_{\nu_k,m} \to \infty \text{ for } k \to \infty \quad \text{ and } \quad \mu_{\nu_k,m} \to \infty \text{ for } m \to \infty 
\end{align*}
and we obtain infinite sequences of eigenvalues $(\lambda_n)_{n\in\N}$ and eigenfrequencies $(\omega_n)_{n\in\N}$ with 
\begin{align}
0 < \lambda_1 \leq \lambda_2 \leq \ldots \to \infty  \quad \text{ and } \quad  0 < \omega_1 \leq \omega_2 \leq \ldots \to \infty \, .
\label{eq: sequence of eigenvalues}
\end{align}

\subsection{Regularity of the eigenfunctions}
\label{section: regularity of the eigenfunctions}
In this section, we delve into the regularity analysis of the Laplace eigenfunctions of circular sectors. To begin, we point out a useful representation of the considered eigenmodes.
\begin{remark}
	\label{remark: product form of eigenfunctions}
	The eigenfunctions \eqref{eq: formula eigenfunctions} can be written in separated variables as a product
	\begin{align}
	u_{\nu_k,m}(r, \varphi) = \, R(r) \, \Phi(\varphi)
	\label{eq: product representation of the eigenfunctions}
	\end{align}
	of a smooth function $\Phi(\varphi) = \cos(\nu_k \, \varphi)$ satisfying $\norm{\Phi^{(l)}}_{L^\infty(\Omega)} \leq \nu_k^l$ for all $l\in\N_0$, and a Bessel function $R(r) = J_{\nu_k}(\mu_{\nu_k,m} \, r)$, which contains all the information about the regularity of the eigenfunctions. Typically, the solutions of boundary value problems on circular sectors are given by a sum of such products. Thus, the product form \eqref{eq: product representation of the eigenfunctions} is a particular property of the considered eigenvalue problem \eqref{eq: model problem}.
\end{remark}

Motivated by Remark \ref{remark: product form of eigenfunctions}, we introduce some general properties of Bessel functions to investigate the regularity of the Laplace eigenfunctions \eqref{eq: formula eigenfunctions}. A useful asymptotic representation for small arguments of Bessel functions of the first kind to any order $\nu \notin \{-1,-2,-3, \dots\}$ is given by Abramowitz and Stegun \cite[Formula 9.1.7, p. 360]{AbramowitzStegun1988},
\begin{align}
J_{\nu}(z) \sim \frac{1}{\Gamma (\nu_k+1)} \left(\frac{z}{2}\right)^{\nu} \quad \text{ for } z \to 0 \, .
\label{eq: asymptotic representation of Bessel funtion to the first kind}
\end{align}
As $\nu_k = k \, \frac{\pi}{\omega}$ for $k \in \N_0$, the formula \eqref{eq: asymptotic representation of Bessel funtion to the first kind} yields that every eigenfunction $u_{\nu_k,m}$ for $k \in \N_0$, $m \in \N$ satisfies
\begin{align}
\label{eq: asymptotic representation of u}
u_{\nu_k,m}(r,\varphi) 
= J_{\nu_k}(\mu_{\nu_k,m} \, r) \cos(\nu_k \, \varphi)
\sim \frac{1}{\Gamma (\nu_k+1)} \left(\frac{\mu_{\nu_k,m} \, r}{2}\right)^{\nu_k}  \cos(\nu_k \, \varphi) = C r^{\nu_k} \quad \text{ for } r \to 0 \, .
\end{align}
Hence, the Laplace eigenfunctions of circular sectors behave like the functions $ r \mapsto r^{\nu_k}$ close to the conical point, which is the crucial observation to assess their regularity.

Furthermore, we have an integral representation of Bessel functions of the first kind to any order $\nu$ with $\text{Re}(\nu)>-\frac12$ \cite[Chapter 9.1]{AbramowitzStegun1988},
\begin{align*}
J_{\nu}(z) = \frac{\left(\frac{z}{2}\right)^\nu}{\pi^{1/2}\Gamma(\nu + \frac12)} \int_0^\pi \cos(z \cos \theta) \sin^{2\nu} \theta \, \mathrm{d} \theta \, .
\end{align*}
Thus, for $z \in \R$ with $z>0$, it is
\begin{align}
\abs{J_{\nu}(z)} & \leq \frac{\left(\frac{z}{2}\right)^\nu}{\pi^{1/2}\Gamma(\nu + \frac12)} \int_0^\pi \abs{\cos(z \cos \theta) \sin^{2\nu} \theta} \, \mathrm{d} \theta \nonumber \\
& \leq \frac{\left(\frac{z}{2}\right)^\nu}{\pi^{1/2}\Gamma(\nu + \frac12)} \int_0^\pi \abs{\sin^{2\nu} \theta} \, \mathrm{d} \theta \nonumber \\
& = C(\nu) \, z^{\nu}
\label{eq: estimate of Bessel functions}
\end{align}
with $C(\nu):= \displaystyle \frac{\left(\frac{1}{2}\right)^\nu}{\pi^{1/2}\Gamma(\nu + \frac12)} \int_0^\pi \abs{\sin^{2\nu} \theta} \, \mathrm{d} \theta $. Combined with the asymptotic behavior \eqref{eq: asymptotic representation of u}, we obtain
\begin{align}
\abs{u_{\nu_k,m}(r,\varphi)} 
= \abs{J_{\nu_k}(\mu_{\nu_k,m} \, r) \cos(\nu_k \, \varphi)} \leq C(\nu_k) \, (\mu_{\nu_k,m} \, r)^{\nu_k} \abs{\cos(\nu_k \, \varphi)}
\leq C \, r^{\nu_k}
\label{eq: estimate of eigenfunctions}
\end{align} 
in $\Omega$ for all $k \in \N_0$ and $m \in \N$.

% Alte Rechnung:
%Every eigenfunction $u = u_{\nu_k,m}$ for $k,m \in \N$ can be decomposed into an alternating series,
%\begin{align}
%u(r,\varphi) & = J_{\nu_k}(\mu_{\nu_k,m} \, r) \cos(\nu_k \, \varphi) \label{eq: Formula for u} \\ 
%& = \cos(\nu_k \, \varphi)  \sum_{j=0}^\infty \frac{(-1)^j}{j! \, \Gamma(\nu_k+j+1)} \left(\frac{\mu_{\nu_k,m} \, r}{2}\right)^{\nu_k+2j} \nonumber \\
%& =  \cos(\nu_k \, \varphi) \, r^{\nu_k} \sum_{j=0}^\infty (-1)^j \, C_j \, r^{2j} \, ,
%\label{eq: Alternating series}
%\end{align}
%where $\{C_j\}_{j \in \N}$ is a monotonously decreasing sequence of positive constants, i.e., $ 0 < C_{j+1} \leq C_j $ for all $j \in \N$, with $C_j \to 0$ for $j \to \infty$. Furthermore, for all $r \in (0,1]$ and $x,y  \in \R$ with $x\leq y$ it is $r^x \geq r^y$. Thus, the alternating series test, also known as Leibniz test, guarantees the convergence of the considered alternating series \eqref{eq: Alternating series}. Hence, for all $(r, \varphi) \in \Omega$, we have
%\todo{Gehe auf C ein, siehe Feedback Thomas. Nutze Formel 9.1.20 aus \cite{AbramowitzStegun1964}.}
%\begin{align*}
%\abs{u(r,\varphi)} & = \abs{\cos(\nu_k \, \varphi) \,  r^{\nu_k} \sum_{j=0}^\infty (-1)^j \, C_j \, r^{2j}} \nonumber\\
%& \leq r^{\nu_k} \abs{ \sum_{j=0}^\infty (-1)^j \, C_j \, r^{2j}} \nonumber\\
%& \leq C \, r^{\nu_k} \, .
%\end{align*}

Next, we consider the gradient $\nabla u = \left(\frac{\partial u}{\partial x_1}, \frac{\partial u}{\partial x_2}\right)^T $ of an eigenfunction $u = u_{\nu_k,m}$ for $k \in \N_0$, $m \in \N$ with $x_1$ and $x_2$ being the Cartesian coordinates in $\R^2$. The eigenmodes can be differentiated using the recursion relation for Bessel functions \cite[Chapter 10.5]{Strauss2008},
\begin{align}
J_\nu'(z) = \frac{\nu}{z} J_\nu(z)- J_{\nu+1}(z) \, .
\label{eq: differentiation of Bessel function}
\end{align}
By changing to polar coordinates and setting $\mu = \mu_{\nu_k,m}$, it follows
\begin{align}
\label{eq: estimate of gradient}
\abs{\nabla u} &\leq C \left(\abs{\dd{u}{r}} + \frac1r \abs{\dd{u}{\varphi}}\right) \nonumber \\
&=  C \left( \abs{ \dd{(J_{\nu_k}(\mu \, r))}{r} \cos(\nu_k \, \varphi)} +  \frac1r \abs{J_{\nu_k}(\mu \, r) \, \nu_k \, \sin(\nu_k \, \varphi)} \right) \nonumber \\
&=  C \left( \abs{\left(\frac{\nu_k}{r} \, J_{\nu_k}(\mu \, r) - \mu \, J_{\nu_k+1}(\mu \, r) \right) \cos(\nu_k \, \varphi)} + \frac1r \abs{J_{\nu_k}(\mu \, r)  \, \nu_k \, \sin(\nu_k \, \varphi)} \right) \\
& \leq C \left( \frac{2\nu_k}{r}  \abs{\, J_{\nu_k}(\mu \, r)} +\mu \abs{ J_{\nu_k+1}(\mu \, r)} \right) \nonumber \\
& \leq C \left( \frac{2\nu_k}{r} \,  r^{\nu_k} +\mu \, r^{\nu_k+1} \right) \nonumber \\
& \leq C \, r^{\nu_k-1} \, , \label{eq: final estimate of gradient}
\end{align}
where we use the boundedness of the Bessel functions \eqref{eq: estimate of Bessel functions}. We repeat the same argumentation for the higher derivatives
\begin{align*}
D^\alpha u = \frac{\partial ^{|\alpha|} u}{ \partial x_1 ^{\alpha_1} \partial x_2 ^{\alpha_2}}  \quad \text{ for multi-indices } \alpha = \begin{pmatrix}
\alpha_1 \\ \alpha_2\end{pmatrix}\in \N_0^2 
\end{align*}
and obtain
\begin{align}
\abs{D^\alpha u_{\nu_k,m}} \leq C r^{\nu_k - \abs{\alpha}} \quad \text{ in } \Omega
\label{eq: estimate for eigenfunctions and its derivatives with k}
\end{align}
for all $k \in \N_0, m \in \N$ and $\alpha \in \N_0^2$, where $|\alpha| = \alpha_1+\alpha_2$.

We are now able to determine the Sobolev regularity of the eigenmodes in the following lemma. Let $H^{s}(\Omega)$ for $s \in \N_0$ be the classical Sobolev spaces and $L^2(\Omega) = H^0(\Omega)$. For general $s\geq 0$ with $s \not \in \N_0$ the notation $H^{s}(\Omega)$ is used for the Sobolev-Slobodeckij spaces.

\begin{lemma}
	\label{lemma: regularity of the eigenfunctions}
	Let $k \in \N_0$ and $m \in \N$. Then, the Laplace eigenfunctions of circular sectors \eqref{eq: formula eigenfunctions} satisfy
	\begin{align*}
	u_{\nu_k,m} \in H^s(\Omega) \quad \text{ for all } s \geq 0 \text{ with } s<\nu_k + 1 \, .
	\end{align*}
\end{lemma}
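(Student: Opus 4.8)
The plan is to reduce the regularity question to an explicit integrability computation using the pointwise derivative bounds \eqref{eq: estimate for eigenfunctions and its derivatives with k} together with the product structure \eqref{eq: product representation of the eigenfunctions}. The key observation is that near the conical point $r\to 0$ the eigenfunction behaves like $r^{\nu_k}$, and the worst-case derivative of order $|\alpha|$ behaves like $r^{\nu_k-|\alpha|}$; since the domain is two-dimensional and polar coordinates contribute a weight $r$ in the area element, a monomial $r^{\beta}$ lies in $L^2(\Omega)$ precisely when $2\beta + 1 > -1$, i.e. $\beta > -1$. Everything should follow from tracking this exponent.

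First I would treat the integer case $s = \ell \in \N_0$ with $\ell \le \nu_k$ (so that $\ell < \nu_k+1$ is automatic and the bound is not yet sharp): here $|D^\alpha u_{\nu_k,m}| \le C r^{\nu_k - |\alpha|} \le C r^{\nu_k - \ell}$ for all $|\alpha| \le \ell$, and since $\nu_k - \ell \ge 0 > -1$ the function and all its derivatives up to order $\ell$ are in $L^2(\Omega)$ — note the integral $\int_0^1 r^{2(\nu_k-\ell)+1}\,\mathrm{d}r$ converges, and the angular part is bounded because $\|\Phi^{(l)}\|_{L^\infty(\Omega)} \le \nu_k^l$. This already gives $u_{\nu_k,m} \in H^{\lfloor \nu_k \rfloor}(\Omega)$ (also $H^{\lceil\nu_k\rceil}(\Omega)$ if $\nu_k\in\N$). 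The remaining and essential part is to capture the fractional regularity up to the threshold $\nu_k+1$. For this, write $\nu_k + 1 = \ell + 1 + t$ with $\ell = \lfloor \nu_k \rfloor$ and $t = \nu_k - \ell \in [0,1)$; I then need to show $u_{\nu_k,m} \in H^{\ell+s'}(\Omega)$ for every $s' \in [0,t)$, i.e. that the Sobolev–Slobodeckij seminorm
\begin{align*}
|D^\alpha u_{\nu_k,m}|_{H^{s'}(\Omega)}^2 = \int_\Omega \int_\Omega \frac{|D^\alpha u_{\nu_k,m}(x) - D^\alpha u_{\nu_k,m}(y)|^2}{|x-y|^{2+2s'}}\,\mathrm{d}x\,\mathrm{d}y
\end{align*}
is finite for all $|\alpha| = \ell$.

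The main obstacle is this last double integral. The standard and cleanest route is to invoke the known characterization of weighted regularity: the estimate \eqref{eq: estimate for eigenfunctions and its derivatives with k} says precisely that $u_{\nu_k,m}$ lies in a Kondrat'ev-type weighted space $V^{\infty}_{\beta}$ with weight exponent matching $\nu_k$, and there are standard embedding results (e.g. in the spirit of Grisvard, or Kondrat'ev, or the Babuška–Kondrat'ev spaces literature) that translate such pointwise weighted bounds near an isolated corner into membership in $H^s$ for $s < \nu_k+1$. Concretely, I would cite or reproduce the elementary lemma: if $v$ is smooth away from the origin in a planar sector and $|D^\alpha v(x)| \le C|x|^{\gamma-|\alpha|}$ for all $|\alpha| \le \ell+1$ with $\gamma \notin \N_0$, then $v \in H^{s}$ for all $s < \gamma+1$ with $s \le \ell+1$; the proof of that lemma estimates the Slobodeckij integral by splitting the domain into dyadic annuli $\{2^{-j-1} \le |x| \le 2^{-j}\}$, using the $|\alpha|=\ell+1$ bound to control the numerator via the mean value theorem on each annulus and a scaling argument, and summing the resulting geometric series — which converges exactly under the condition $s < \gamma + 1$. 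Alternatively, since here $\Phi$ is globally smooth and bounded with all derivatives, one can separate variables and reduce to a one-dimensional weighted Sobolev statement about $r \mapsto J_{\nu_k}(\mu r)$ on $(0,1)$ with the weight $r\,\mathrm{d}r$, which is more elementary but requires care in relating the one-dimensional fractional norm of the radial factor to the two-dimensional fractional norm on $\Omega$. I expect the write-up to follow the dyadic-annulus argument, with the convergence of $\sum_j 2^{-j(2\gamma + 2 - 2s)}$ being the one inequality where the hypothesis $s < \nu_k+1$ is actually used.
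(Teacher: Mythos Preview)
Your approach is correct and matches the paper's route exactly: the paper's proof is two sentences, observing that the estimate $|D^\alpha u_{\nu_k,m}| \le C\, r^{\nu_k-|\alpha|}$ shows the eigenfunction and its derivatives are controlled by the model function $r\mapsto r^{\nu_k}$ and its derivatives, and then citing as ``well known'' that $r^{\nu_k}\in H^s(\Omega)$ precisely for $s<\nu_k+1$. You have essentially supplied the argument behind that citation --- integer orders by a direct $L^2$ computation, fractional orders by a dyadic-annulus Slobodeckij estimate (equivalently, a Kondrat'ev-space embedding).

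One small indexing slip to fix: your stated target $u_{\nu_k,m}\in H^{\ell+s'}$ for $s'\in[0,t)$ only reaches $s<\ell+t=\nu_k$, not $s<\nu_k+1$. The correct range is $s'\in[0,1+t)$; in particular, when $t>0$ you must first check $u_{\nu_k,m}\in H^{\ell+1}$ (which holds since $r^{\,t-1}\in L^2(\Omega)$) and then bound the Slobodeckij seminorm of $D^\alpha u_{\nu_k,m}$ for $|\alpha|=\ell+1$, not $|\alpha|=\ell$. Your ``elementary lemma'' as stated is correct and, applied with $\gamma=\nu_k$ and $\ell$ ranging over all integers, already delivers the full conclusion, so this is an inconsistency in the setup sentence rather than a genuine gap.
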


\begin{proof}
	Estimate \eqref{eq: estimate for eigenfunctions and its derivatives with k} shows that the eigenfunction $u_{\nu_k,m}$ and its derivatives can be controlled by the function $r \mapsto r^{\nu_k}$ and its derivatives. It is well known that $r \mapsto r^{\nu_k}$ is in $H^s(\Omega)$ for all $s \geq 0$ with $\nu_k-s>-1$, that is, for $s<\nu_k+1$.		
\end{proof}

In general, the regularity result in Lemma \ref{lemma: regularity of the eigenfunctions} is not sharp. For instance, consider $k=0$. Then, it is $\nu_0=0$ and thus Lemma \ref{lemma: regularity of the eigenfunctions} only yields that $u_{\nu_0,m} \in H^s(\Omega)$ for all $s<1$. Indeed, the eigenfunctions $u_{\nu_0,m}$ are of a different nature in the sense that
\begin{align*}
u_{\nu_k,m}(0,0) = \begin{cases}
0 \quad & \text{ if } k>0  \, ,\\
1 \quad & \text{ if } k=0  \, .
\end{cases}
\end{align*}
Nevertheless, it is clear that the functions $u_{\nu_0,m}$ are at least in $H^1(\Omega)$, since they are solutions of the weak form of our model problem \eqref{eq: model problem}, which will be discussed more detailed in Section \ref{subsec: numerical solution process}. In fact, Lemma \ref{lemma: regularity of the eigenfunctions} is sharp for all eigenmodes $u_{\nu_k,m}$ with $\nu_k \not \in \N_0$, as the following lemma shows.

\begin{lemma}
	\label{lemma: smooth eigenfunctions}
	Let $k \in \N_0$ and $m \in \N$. The Laplace eigenfunctions of circular sectors \eqref{eq: formula eigenfunctions} satisfy
	\begin{align*}
	u_{\nu_k,m} \in H^s(\Omega) \quad \text{ for all } s \geq 0
	\end{align*}
	if and only if $\nu_k = k \frac{\pi}{\omega} \in \N_0$. For $\nu_k \not \in \N_0$, the regularity result shown in Lemma \ref{lemma: regularity of the eigenfunctions} is sharp.
\end{lemma}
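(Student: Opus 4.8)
The plan is to prove the two implications separately, establishing the ``only if'' part in the sharper, quantitative form that also gives the stated sharpness of Lemma~\ref{lemma: regularity of the eigenfunctions}. For the direction $\nu_k=k\pi/\omega\in\N_0$, write $n=\nu_k$ and insert the power series of the Bessel function into \eqref{eq: formula eigenfunctions}, using $r^{2j}=(x_1^2+x_2^2)^j$ and $r^{n}\cos(n\varphi)=\operatorname{Re}\big((x_1+\mathrm{i}x_2)^{n}\big)$:
\begin{align*}
u_{n,m}(r,\varphi)=\sum_{j=0}^{\infty}\frac{(-1)^{j}}{j!\,(n+j)!}\left(\frac{\mu_{n,m}}{2}\right)^{2j+n}(x_1^2+x_2^2)^{j}\,\operatorname{Re}\big((x_1+\mathrm{i}x_2)^{n}\big).
\end{align*}
Each summand is a polynomial in $(x_1,x_2)$, and since $J_n$ is entire the series converges locally uniformly on $\R^2$ together with all its termwise derivatives. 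Hence $u_{n,m}$ is the restriction to $\overline{\Omega}$ of a real-analytic function on $\R^2$, so $u_{n,m}\in C^{\infty}(\overline{\Omega})\subseteq H^{s}(\Omega)$ for every $s\geq 0$ because $\Omega$ is bounded; the case $n=0$, where $u_{0,m}=J_0(\mu_{0,m}\sqrt{x_1^2+x_2^2})$, is covered as well.

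For the converse, assume $\nu_k\notin\N_0$; I must show $u_{\nu_k,m}\notin H^{s}(\Omega)$ for every $s\geq\nu_k+1$, and since $H^{s}(\Omega)\subseteq H^{\nu_k+1}(\Omega)$ it suffices to prove $u_{\nu_k,m}\notin H^{\nu_k+1}(\Omega)$. First I would peel off the leading singular term: from the Bessel series,
\begin{align*}
u_{\nu_k,m}=C_0\,r^{\nu_k}\cos(\nu_k\varphi)+w,\qquad C_0=\frac{1}{\Gamma(\nu_k+1)}\left(\frac{\mu_{\nu_k,m}}{2}\right)^{\nu_k}\neq 0,
\end{align*}
where $w=C_0\operatorname{Re}(z^{\nu_k})\,\psi(x_1^2+x_2^2)$ with $z=x_1+\mathrm{i}x_2$ and $\psi$ an entire function satisfying $\psi(0)=0$, i.e.\ $w$ is the product of $\operatorname{Re}(z^{\nu_k})$ with a smooth function vanishing to second order at the origin. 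Arguing exactly as in the derivation of \eqref{eq: estimate for eigenfunctions and its derivatives with k} (Leibniz rule together with \eqref{eq: estimate of Bessel functions}) gives $\abs{D^{\alpha}w}\leq C\,r^{\nu_k+2-\abs{\alpha}}$ in $\Omega$, so the proof of Lemma~\ref{lemma: regularity of the eigenfunctions} yields $w\in H^{\nu_k+1}(\Omega)$. Therefore $u_{\nu_k,m}\in H^{\nu_k+1}(\Omega)$ if and only if $v:=r^{\nu_k}\cos(\nu_k\varphi)\in H^{\nu_k+1}(\Omega)$, and it remains to rule this out.

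To show $v\notin H^{\nu_k+1}(\Omega)$ I would use that $v=\operatorname{Re}(z^{\nu_k})$ is harmonic on $\Omega$, the branch of $z^{\nu_k}$ with argument in $(0,\omega)\subseteq(0,2\pi)$ being holomorphic there. Writing $\nu_k+1=\ell+\tau$ with $\ell=\lfloor\nu_k\rfloor+1\geq1$ and $\tau=\nu_k+1-\ell\in(0,1)$, harmonicity implies that the Cartesian derivatives of order $\ell$ of $v$ are the real and imaginary parts of $\nu_k(\nu_k-1)\cdots(\nu_k-\ell+1)\,z^{\nu_k-\ell}$, and the prefactor is nonzero precisely because $\nu_k\notin\{0,1,\dots,\ell-1\}$. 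Hence $\sum_{\abs{\alpha}=\ell}\abs{D^{\alpha}v}\geq c\,r^{\nu_k-\ell}$ pointwise in $\Omega$, with exponent $\nu_k-\ell\in(-1,0)$. A scaling estimate over the dyadic annuli $\{2^{-j-1}<r<2^{-j}\}$ around the conical point then shows that the contribution of the $j$-th annulus to the squared Sobolev--Slobodeckij seminorm $\int_{\Omega}\int_{\Omega}\abs{D^{\alpha}v(x)-D^{\alpha}v(y)}^{2}\abs{x-y}^{-2-2\tau}\,\mathrm{d}x\,\mathrm{d}y$ stays bounded below by a positive constant, as it scales like $2^{-2j(\nu_k-\ell+1-\tau)}=2^{0}=1$; hence the seminorm diverges and $v\notin H^{\nu_k+1}(\Omega)$. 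This gives $u_{\nu_k,m}\notin H^{\nu_k+1}(\Omega)$, so combined with Lemma~\ref{lemma: regularity of the eigenfunctions} it is exactly the asserted sharpness, and it also proves the implication ``$u_{\nu_k,m}\in H^{s}(\Omega)$ for all $s\geq0\ \Rightarrow\ \nu_k\in\N_0$''. Alternatively, the non-membership of $r^{\nu_k}\cos(\nu_k\varphi)$ in $H^{\nu_k+1}$ for non-integer $\nu_k$ is a classical fact from the theory of corner singularities and could simply be cited.

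I expect this last step --- the genuine \emph{lower} bound forcing the fractional seminorm to diverge --- to be the main obstacle: the estimates \eqref{eq: estimate of Bessel functions}--\eqref{eq: estimate for eigenfunctions and its derivatives with k} only deliver membership, so one must produce a pointwise lower bound on the high-order derivatives and carry out the (standard but somewhat delicate) Slobodeckij computation near the conical point. Reducing everything to the single harmonic model term $v$ via its holomorphic structure is what keeps that computation tractable; the remaining arguments are routine.
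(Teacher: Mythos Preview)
Your proof is correct and takes a genuinely different route from the paper's. For the direction $\nu_k\in\N_0\Rightarrow u_{\nu_k,m}\in H^s(\Omega)$ for all $s$, the paper does not recognize the Bessel series as a convergent Cartesian power series; instead it bounds $|u_{n,m}|$ and $|\nabla u_{n,m}|$ via the estimate \eqref{eq: estimate of eigenfunctions} and then expands $r^n\cos(n\varphi)$ and $r^{n-1}\sin(n\varphi)$ through multiple-angle trigonometric formulas to exhibit explicit polynomial bounds in $|x|,|y|$, with the higher-order case stated by analogy. Your use of the identity $r^n\cos(n\varphi)=\operatorname{Re}\big((x_1+\mathrm{i}x_2)^n\big)$ together with $r^{2j}=(x_1^2+x_2^2)^j$ gives real-analyticity in one stroke and is considerably cleaner. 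For the converse and the sharpness claim, the paper simply invokes the asymptotic representation \eqref{eq: asymptotic representation of u} and asserts that since $u_{\nu_k,m}$ behaves like $r^{\nu_k}$ near the origin it fails to lie in $H^s(\Omega)$ for $s\geq\nu_k+1$; no decomposition into leading singular term plus regular remainder and no Slobodeckij computation is carried out. Your argument is thus more self-contained, supplying exactly the lower-bound step that the paper treats as known, while the paper's version has the virtue of brevity by deferring to the classical fact you yourself mention could be cited.
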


\begin{proof}
	First, let $\nu_k = k \frac{\pi}{\omega} \not \in \N_0$. Exploiting the asymptotic representation \eqref{eq: asymptotic representation of u}, we observe that $u_{\nu_k,m}$ behaves like $r \mapsto r^{\nu_k}$ for $r \to 0$. Consequently, it is $u_{\nu_k,m} \not \in H^s(\Omega)$ for all $s\geq \nu_k+1$ and Lemma \ref{lemma: regularity of the eigenfunctions} is sharp.
	
	Now, let $n:= \nu_k = k \frac{\pi}{\omega} \in \N_0$. Estimate \eqref{eq: estimate of eigenfunctions} yields that
	\begin{align*}
	\abs{u_{n,m}(r,\varphi)} = \abs{J_{n}(\mu_{n,m} \, r) \cos(n \, \varphi) } \leq C r^n \abs{\cos(n \, \varphi)} \, .
	\end{align*}
	With the relations between Cartesian and polar coordinates, $(x,y) = (r \cos \varphi, r \sin \varphi)$ and $r^2=x^2 + y^2$, it holds
	\begin{align*}
	\abs{u_{n,m}(r,\varphi)} &  \leq C r^n \abs{\cos(n \, \varphi)} \\
	&= C r^n \abs{\sum_{j=0}^{ \lfloor \frac{n}{2} \rfloor} (-1)^j \binom{n}{2j} (\sin \varphi)^{2j}  (\cos \varphi)^{n-2j}} \\
	&= C \abs{\sum_{j=0}^{ \lfloor \frac{n}{2} \rfloor} (-1)^j \binom{n}{2j} \, r^{2j} (\sin \varphi)^{2j} \, r^{n-2j} (\cos \varphi)^{n-2j}} \\
	& = C \abs{\sum_{j=0}^{ \lfloor \frac{n}{2} \rfloor} (-1)^j \binom{n}{2j} \, y^{2j} \, x^{n-2j}} \\
	& = C \abs{P_0(x,y)}
	\end{align*}
	with the polynomial $P_0(x,y):= \displaystyle \sum_{j=0}^{ \lfloor \frac{n}{2} \rfloor} (-1)^j \binom{n}{2j} \, y^{2j} \, x^{n-2j}$, where we used a multiple-angle formula for the cosinus \cite[Chapter 2.6.2]{BronsteinSemendjajewMusiolMuhlig2001}. Hence, it is clear that $u \in L^2(\Omega)$. Next, we recall estimate \eqref{eq: estimate of gradient} and also apply a multiple-angle formula for the sinus \cite[Chapter 2.6.2]{BronsteinSemendjajewMusiolMuhlig2001} to obtain
	\begin{align*}
	|\nabla u_{n,m}| &
	\leq  C \left( \abs{\left(\frac{n}{r} \, J_{n}(\mu_{n,m} \, r) - \mu_{n,m} \, J_{n+1}(\mu_{n,m} \, r) \right) \cos(n \, \varphi)} + \frac1r \abs{J_{n}(\mu_{n,m} \, r)  \, n \, \sin(n \, \varphi)} \right) \\
	& \leq C \left( \left( r^{n-1} - r^{n+1} \right) \abs{\cos(n \, \varphi)} + r^{n-1} \abs{\sin(n \, \varphi)} \right) \\
	& \leq C r^{n-1} \left(\abs{\cos(n \, \varphi)} +  \abs{\sin(n \, \varphi)} \right) \\
	& = C r^{n-1} \left(\abs{\sum_{j=0}^{ \lfloor \frac{n}{2} \rfloor} (-1)^j \binom{n}{2j} (\sin \varphi)^{2j}  (\cos \varphi)^{n-2j}} + \abs{\sum_{j=0}^{ \lfloor \frac{n-1}{2} \rfloor} (-1)^j \binom{n}{2j+1} (\sin \varphi)^{2j+1}  (\cos \varphi)^{n-2j-1}} \right) \\
	& \leq C r^{n-1} \left(\sum_{j=0}^{ \lfloor \frac{n}{2} \rfloor} \binom{n}{2j} \abs{\sin \varphi}^{2j}  \abs{\cos \varphi}^{n-2j} + \sum_{j=0}^{ \lfloor \frac{n-1}{2} \rfloor} \binom{n}{2j+1} \abs{\sin \varphi}^{2j+1} \abs{\cos \varphi}^{n-2j-1} \right) \\
	& \leq C r^{n-1} \left(\sum_{j=0}^{ \lfloor \frac{n}{2} \rfloor} \binom{n}{2j} \abs{\sin \varphi}^{2j-1}  \abs{\cos \varphi}^{n-2j} + \sum_{j=0}^{ \lfloor \frac{n-1}{2} \rfloor} \binom{n}{2j+1} \abs{\sin \varphi}^{2j} \abs{\cos \varphi}^{n-2j-1} \right) \\
	& = C \left(\sum_{j=0}^{ \lfloor \frac{n}{2} \rfloor} \binom{n}{2j} \abs{r \sin \varphi}^{2j-1}  \abs{r \cos \varphi}^{n-2j} + \sum_{j=0}^{ \lfloor \frac{n-1}{2} \rfloor} \binom{n}{2j+1} \abs{r \sin \varphi}^{2j} \abs{r \cos \varphi}^{n-2j-1} \right) \\
	& = C \left(\sum_{j=0}^{ \lfloor \frac{n}{2} \rfloor} \binom{n}{2j} \abs{x}^{2j-1}  \abs{y}^{n-2j} + \sum_{j=0}^{ \lfloor \frac{n-1}{2} \rfloor} \binom{n}{2j+1} \abs{x}^{2j} \abs{y}^{n-2j-1} \right) \\
	& \leq C P_1(\abs{x}, \abs{y})
	\end{align*}
	with the polynomial $P_1(x,y)= \displaystyle \sum_{j=0}^{ \lfloor \frac{n}{2} \rfloor} \binom{n}{2j} x^{2j-1}  y^{n-2j} + \displaystyle \sum_{j=0}^{ \lfloor \frac{n-1}{2} \rfloor} \binom{n}{2j+1} x^{2j} y^{n-2j-1}$. Note that for some of the inequalities above it is essential that $r \in [0,1]$ and $\abs{\sin \varphi} \in [0,1]$. Thus, it follows that $\nabla u_{n,m} \in L^2(\Omega)$ and $u_{n,m}\in H^1(\Omega)$. Similarly, for higher derivatives $D^\alpha u_{n,m}$ with any multi-index $\alpha \in \R^2$, there exists a polynomial $P_{\abs{\alpha}}$ such that
	\begin{align*}
	\abs{D^\alpha u_{n,m}} \leq C P_{\abs{\alpha}}(\abs{x},\abs{y}) \, .
	\end{align*}
	Therefore, it is $D^\alpha u_{n,m} \in L^2(\Omega)$ for all $\alpha \in \R^2$ and we obtain
	\begin{align*}
	u_{n,m} \in H^s(\Omega) \quad \text{ for all } s \in \N_0  \, . 
	\end{align*}
	Using an interpolation argument for the Sobolev-Slobodeckij spaces, it immediately follows that $u_{n,m} \in H^s(\Omega)$ for all $s \geq 0$ and the lemma is proven.
\end{proof}

Throughout this paper, we will call the eigenfunctions $u_{\nu_k,m}$ with $\nu_k \in \N_0$ and $m\in \N$ smooth in the sense of Lemma \ref{lemma: smooth eigenfunctions}. To conclude the regularity analysis, we derive some simple but illustrative conclusions from the two lemmata above.

\begin{remark} 
	\
	\begin{enumerate}
		\item For all $k \in \N_0$ and $m \in \N$ it is $u_{\nu_k,m} \in H^1(\Omega)$.
		\item The eigenfunctions $u_{\nu_0,m}$ of circular sectors with arbitrary angles $\omega \in (0,2\pi]$ are smooth for all $m \in \N$.
		\item Consider a circular sector whose angle $\omega$ satisfies $\frac{\pi}{\omega} \in \N$, that is, $\omega = \frac{\pi}{N}$ for some $N \in \N$. Then all its Laplace eigenfunctions $u_{\nu_k,m}$ are smooth as $\nu_k = k \frac{\pi}{\omega} \in \N$ for all $k \in \N_0$.
		\item If the angle of a circular sector can not be expressed as a product of a rational number and $\pi$, i.e., $\omega \neq q \pi$ for all $q \in \Q$, it is  $\nu_k = k \frac{\pi}{\omega} \notin \N$ for all $k \in \N$, that is, none of the eigenfunctions $u_{\nu_k,m}$ for $k>0$ is smooth.
		\item The eigenfunctions $u_{\nu_1,m}$ of circular sectors with angles $\omega\in(\pi,2\pi]$ do not belong to $H^2(\Omega)$ for all $m \in \N$. They contain the strongest singularity among all eigenfunctions, which is of type $r^{\nu_1}$ near the conical point of the circular sector, where $\nu_1 = \frac{\pi}{\omega} \in [\frac12,1)$.
	\end{enumerate}
\end{remark}

\subsection{Eigenfunctions and eigenfrequencies of the unit disk with crack}
\label{subsection: regularity of eigenfunctions for omega = 2pi}
To gain a deeper understanding of the model problem \eqref{eq: model problem} and its exact solutions, we will now discuss a specific example: the circular sector with angle $\omega = 2 \pi$, which represents the unit disk with a crack located along the positive $x$-axis, as presented in Figure \ref{fig: sketch of model domain b}. All other circular sectors with smaller angles $\omega < 2 \pi$ can be interpreted as analogs or even simplifications of the case $\omega = 2 \pi$.

The Laplace eigenfunctions of the unit disk with crack are given by the formula \eqref{eq: formula eigenfunctions} with $\nu_k = \frac{k}{2}$ for $k \in \N_0$. Utilizing the regularity results outlined in Section \ref{section: regularity of the eigenfunctions}, we categorize the eigenmodes into two types: 
\begin{enumerate}[label = (\Alph*)]
	\item $\nu_k \in \N_0$, i.e., $k = 2n$ for $n \in \N_0$. As indicated in Lemma \ref{lemma: smooth eigenfunctions}, the corresponding eigenfunctions $u_{n,m}$ are smooth for all $m \in \N$. Exemplary eigenfunctions, namely $u_{\nu_0,m}$ and $u_{\nu_2,m}$ for $m \in \{1,2,3\}$, are showcased in the first and third columns of Figure \ref{fig:12eigenfunctions}. It is noteworthy that the crack is not visible in graphical representations of these eigenfunctions. This absence is due to the product form \eqref{eq: product representation of the eigenfunctions} of the eigenmodes and the $2\pi$-periodicity of the angular functions $\varphi \mapsto \cos(\nu_k \, \varphi) = \cos(n \, \varphi)$. 
	\item $\nu_k \notin \N_0$, i.e., $k = 2n+1$ for $n \in \N_0$. The corresponding eigenfunctions $u_{n + \frac12,m}$ have a singularity of type $r^{n + \frac12}$ and are only contained in $H^{s}(\Omega)$ for all $s<n + \frac32$, recall Lemma \ref{lemma: regularity of the eigenfunctions}. As an example, the eigenmodes $u_{\nu_1,m}$ and $u_{\nu_3,m}$ for $m \in \{1,2,3\}$ have singularities of type $r^{1/2}$ and $r^{3/2}$, respectively, and are presented in the second and fourth columns of Figure \ref{fig:12eigenfunctions}. Here, the crack is clearly visible since the functions $\varphi \mapsto \cos(\nu_k \varphi) = \cos((n+\frac12) \varphi)$ are not $2\pi$-periodic.
\end{enumerate}

\begin{figure}[t]
	\includegraphics[width=0.245\linewidth, trim=3cm 2cm 3cm 3cm, clip]{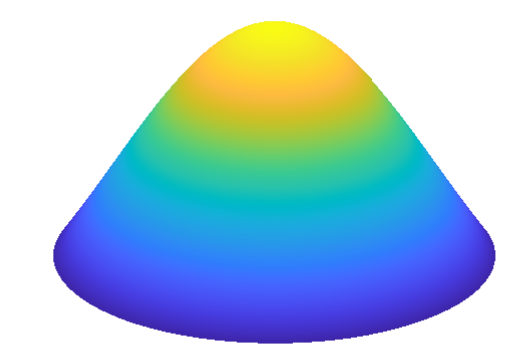}
	\includegraphics[width=0.245\linewidth, trim=3cm 2cm 3cm 3cm, clip]{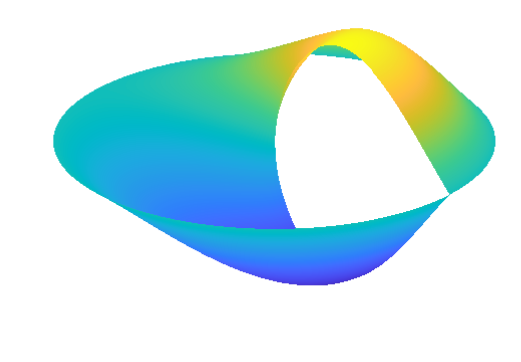}
	\includegraphics[width=0.245\linewidth, trim=3cm 2cm 3cm 3cm, clip]{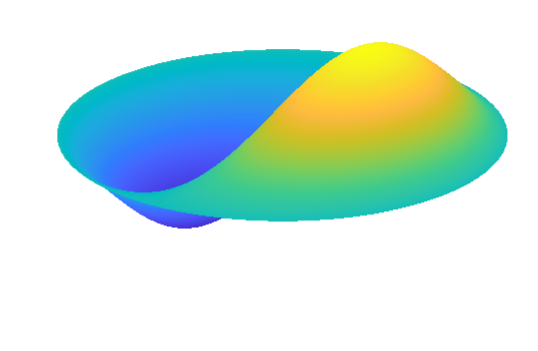}
	\includegraphics[width=0.245\linewidth, trim=3cm 2cm 3cm 3cm, clip]{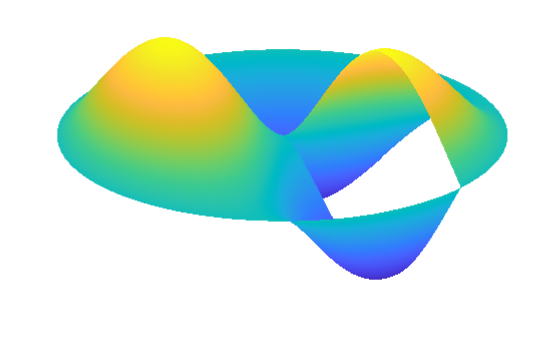}
	
	\includegraphics[width=0.245\linewidth, trim=3cm 2cm 3cm 3cm, clip]{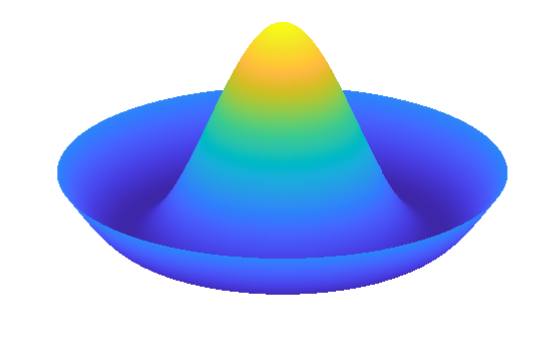}
	\includegraphics[width=0.245\linewidth, trim=3cm 2cm 3cm 3cm, clip]{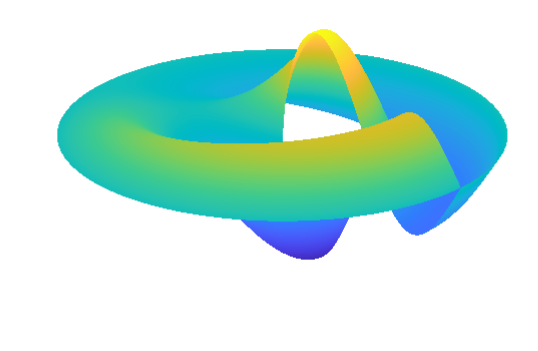}
	\includegraphics[width=0.245\linewidth, trim=3cm 2cm 3cm 3cm, clip]{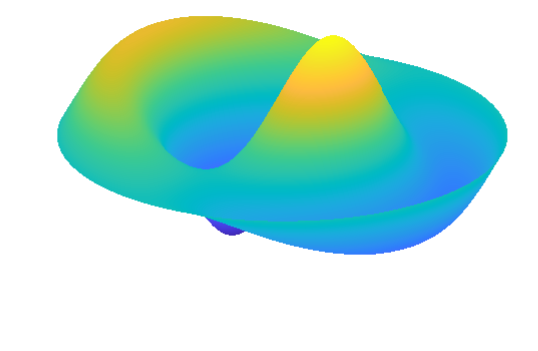}
	\includegraphics[width=0.245\linewidth, trim=3cm 2cm 3cm 3cm, clip]{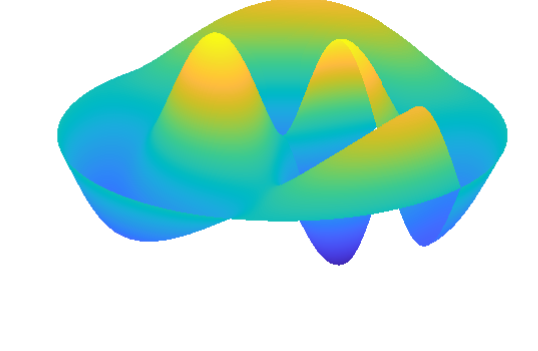}
	
	\includegraphics[width=0.245\linewidth, trim=3cm 2cm 3cm 3cm, clip]{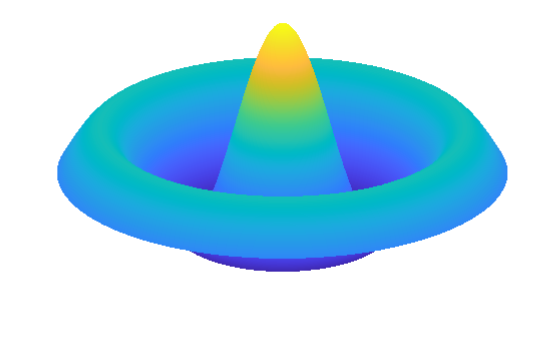}
	\includegraphics[width=0.245\linewidth, trim=3cm 2cm 3cm 3cm, clip]{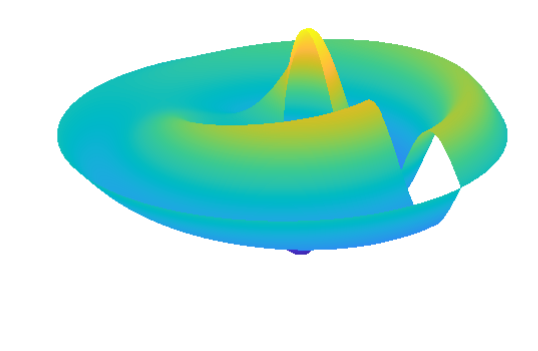}
	\includegraphics[width=0.245\linewidth, trim=3cm 2cm 3cm 3cm, clip]{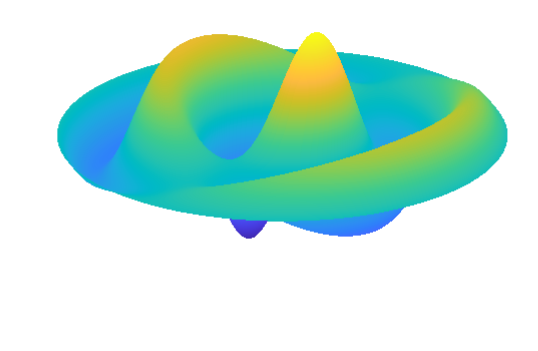}
	\includegraphics[width=0.245\linewidth, trim=3cm 2cm 3cm 3cm, clip]{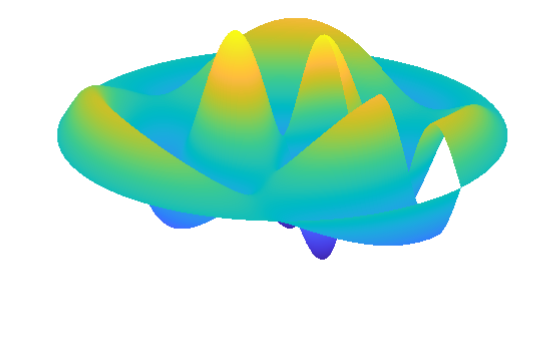}
	
	\vspace{-2mm}
	\caption{Illustration of $12$ Laplace eigenfunctions $u_{\nu_k,m}$ of the unit disk with crack. Columns from left to right: $\nu_k =0, \nu_k=\frac12, \nu_k =1, \nu_k=\frac32$. Rows from top to bottom: $m=1,m=2,m=3$.}
	\label{fig:12eigenfunctions}
\end{figure}

As indicated in Remark \ref{remark: product form of eigenfunctions}, the regularity properties of the Laplace eigenmodes of circular sectors are based on the corresponding Bessel functions. In Figure \ref{fig:besselfunszeros}, we visualize the Bessel functions $J_{\nu_k}$ to the orders $\nu_k \in \{0,\frac12,1, \frac32, \dots, \frac{11}{2}, 6\}$, which are all the half-integer and integer Bessel functions with zeros in the interval $[0,10]$. The asymptotic behavior \eqref{eq: asymptotic representation of Bessel funtion to the first kind} can be clearly observed, i.e., $J_{\nu_k}(z) \sim z^{\nu_k}$ for $z \to 0$, and $J_0$ differs from the other Bessel functions in the sense that $J_0(0)=1$ and $J_{\nu_k}(0)=0$ for $\nu_k \neq 0$. Bessel functions of half-integer order can be expressed explicitly \cite[Chapter 10.5]{Strauss2008}. For instance, it holds 
\begin{align*}
J_{\frac12}(z)=\sqrt{\frac{2}{\pi z}} \sin z \, ,
\end{align*}
which once again highlights the presence of a singularity of type $z^{1/2}$ in the Bessel function $J_{\frac12}$.

\begin{figure}[t]
	\centering
	\includegraphics[width=0.75\linewidth, trim=3cm 0.5cm 3cm 1.3cm, clip]{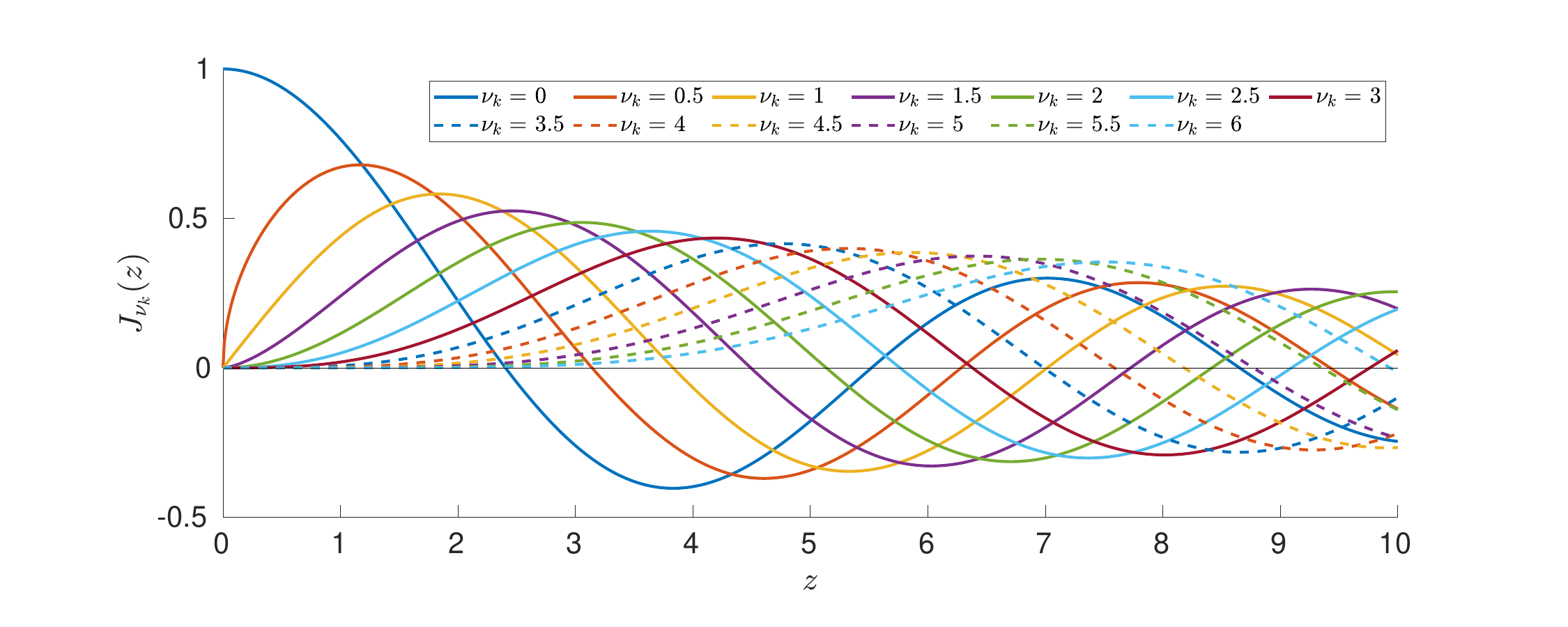}
	\vspace{-2mm}
	\caption{Visualization of the Bessel functions $J_{\nu_k}$ to the orders $\nu_k \in \{0,0.5,1, \dots, 5.5,6\}$ in the interval $[0,10]$.}
	\label{fig:besselfunszeros}
\end{figure}

Table \ref{tab:ZerosOfBesselfunctions} provides the eigenfrequencies of the unit disk with crack within the interval $[0,10]$, listed in ascending order. They are given by the zeros of the functions depicted in Figure \ref{fig:besselfunszeros}. Each eigenfrequency is associated with a specific root of a Bessel function, indicating the corresponding eigenfunction and its regularity. The eigenfrequencies $\omega_2$, $\omega_8$ and $\omega_{20}$, highlighted in dark orange, correspond to eigenfunctions with the strongest singularity, $u_{1/2,1}$, $u_{1/2,2}$ and $u_{1/2,3}$, respectively. Likewise, the frequencies $\omega_4$ and $\omega_{13}$, marked in light orange, belong to eigenfunctions with the second strongest singularity, $u_{3/2,1}$ and $u_{3/2,2}$, respectively, and so forth. Note that the occurrence of eigenfunctions characterized by identical singularities becomes rare as the frequencies increase. Loosely speaking, as $N \to \infty$, we observe that out of $N^2$ eigenfunctions, approximately $N$ contain a singularity of type $r^{1/2}$, $r^{3/2}$, and so on.

\begin{table}[t]
	\begin{center}
		\begin{tabular}{ c|c|c|c|c|c } 
			Eigenfrequency & Value &  $\nu_k = k/2$ & $m$ & Eigenfunction & Regularity \\ 
			\hline
			$\omega_1$ & $2.40$ & $0$ & $1$ & $u_{0,1}$ & smooth \\ 
			\rowcolor{orange!60}
			$\omega_2$ & $3.14$ & $0.5$ & $1$ & $u_{0.5,1}$ & $H^1(\Omega)$\\ 
			$\omega_3$ & $3.83$ & $1$ & $1$ & $u_{1,1}$ & smooth\\ 
			\rowcolor{orange!30}
			$\omega_4$ & $4.49$ & $1.5$ & $1$ & $u_{1.5,1}$ & $H^2(\Omega)$ \\ 
			$\omega_5$ & $5.14$ & $2$ & $1$ & $u_{2,1}$ & smooth \\ 
			$\omega_6$ & $5.52$ & $0$ & $2$ & $u_{0,2}$ & smooth \\ 
			$\omega_7$ & $5.76$ & $2.5$ & $1$ & $u_{2.5,1}$ & $H^3(\Omega)$ \\
			\rowcolor{orange!60} 
			$\omega_8$ & $6.28$ & $0.5$ & $2$ & $u_{0.5,2}$ & $H^1(\Omega)$\\ 
			$\omega_9$ & $6.38$ & $3$ & $1$ & $u_{3,1}$ & smooth \\ 
			$\omega_{10}$ & $6.99$ & $3.5$ & $1$ & $u_{3.5,1}$ & $H^4(\Omega)$ \\
			$\omega_{11}$ & $7.02$ & $1$ & $2$ & $u_{1,2}$ & smooth \\
			$\omega_{12}$ & $7.59$ & $4$ & $1$ & $u_{4,1}$ & smooth \\ 
			\rowcolor{orange!30}
			$\omega_{13}$ & $7.73$ & $1.5$ & $2$ & $u_{1.5,2}$ & $H^2(\Omega)$ \\
			$\omega_{14}$ & $8.18$ & $4.5$ & $1$ & $u_{4.5,1}$ & $H^5(\Omega)$\\  
			$\omega_{15}$ & $8.42$ & $2$ & $2$ & $u_{2,2}$ & smooth \\  
			$\omega_{16}$ & $8.65$ & $0$ & $3$ & $u_{0,3}$ & smooth \\  
			$\omega_{17}$ & $8.77$ & $5$ & $1$ & $u_{5,1}$ & smooth \\  
			$\omega_{18}$ & $9.10$ & $2.5$ & $2$ & $u_{2.5,2}$ & $H^3(\Omega)$ \\  
			$\omega_{19}$ & $9.36$ & $5.5$ & $1$ & $u_{5.5,1}$ & $H^6(\Omega)$\\  
			\rowcolor{orange!60}
			$\omega_{20}$ & $9.42$ & $0.5$ & $3$ & $u_{0.5,3}$ & $H^1(\Omega)$\\  
			$\omega_{21}$ & $9.76$ & $3$ & $2$ & $u_{3,2}$ & smooth \\  
			$\omega_{22}$ & $9.94$ & $6$ & $1$ & $u_{6,1}$ & smooth \\ 
			%			$\omega_{23}$ & $10.1735$ & $1$ & $3$ & $u_{1,3}$ \\ 
			%			$\omega_{24}$ & $10.4171$ & $3.5$ & $2$ & $u_{7/2,2}$ \\ 
			%			$\omega_{25}$ & $10.5128$ & $6.5$ & $1$ & $u_{13/2,1}$ \\ 
			%			$\omega_{26}$ & $10.9041$ & $1.5$ & $3$ & $u_{3/2,3}$ \\     
		\end{tabular}
	\end{center}
	\vspace{-2mm}
	\caption{The eigenfrequencies \eqref{eq: sequence of eigenvalues} of the unit disk with crack are listed in ascending order within the interval $[0,10]$ . As per equation \eqref{eq: formula eigenvalues}, each eigenfrequency corresponds to the $m$-th root of a Bessel function $J_{\nu_k}$, denoted by $\omega_n = \mu_{\nu_k,m}$ for some $\nu_k \in \{0, \frac12, 1, \frac32, \dots \}$ and $m \in \N$. Additionally, the corresponding eigenfunction and its regularity are provided. Rows containing eigenfunctions with the strongest and second strongest singularity are highlighted in dark orange and light orange, respectively.}
	\label{tab:ZerosOfBesselfunctions}
\end{table}

\subsection{Numerical solution process}
\label{subsec: numerical solution process}
Having explored the analytical solution of the Laplace eigenvalue problem on circular sectors, we now describe how it can be approached numerically. Therefore, we introduce the corresponding variational form, also known as weak form, of the model problem \eqref{eq: model problem}, see \cite[Chapter I.3]{BabuskaOsborn1991} for more details. We choose
\begin{align*}
V:= H^1(\Omega) 
\end{align*}
as the underlying Hilbert space for the test and basis functions. Taking into account the boundary conditions, we further define
\begin{align*}
V_0 := \{v \in  H^1(\Omega) : v = 0 \text{ on } \Gamma_D \} \, .
\end{align*} 
Multiplying the eigenvalue equation with test functions $v \in V_0$ and integrating over the domain $\Omega$ leads to the equation
\begin{align*}
\int_\Omega \nabla u \cdot \nabla v \, \mathrm{d} \bs x = \lambda \int_\Omega u \, v \, \mathrm{d} \bs x \, .
\end{align*}
Hence, we define the bilinear forms
\begin{align*}
a : V \times V \to \R, \quad & a( u,  v)  := \int_\Omega \nabla u \cdot \nabla v \, \mathrm{d} \bs x\, ,  \\
m : V \times V \to \R, \quad & m( u,  v)  := \int_\Omega u \, v \,  \mathrm{d} \bs x
\end{align*}
and obtain the weak formulation of the eigenvalue problem: Find $(\lambda, u) \in \R^+ \times V_0 \setminus \{0\}$ such that
\begin{align}
a( u,  v)	= \lambda \,  m( u,  v)  \quad \forall v \in V_0 \, .
\label{eq: weak formulation}
\end{align}
Knowing that both bilinear forms are bounded,
\begin{align*}
a( u,  v) &\leq C_1 \norm{u}_V  \norm{v}_V \,  \quad \forall u,v \in V \, , \\
m( u,  v) &\leq C_2 \norm{u}_V  \norm{v}_V \,  \quad \forall u,v \in V
\end{align*}
and $a$ is coercive on $V_0$,
\begin{align*}
a( u,  u) \geq \alpha \norm{u}^2_V  \quad \forall u \in V_0 \, ,
\end{align*}
it can be shown \cite[Chapter 4]{BabuskaOsborn1991} that the eigenvalue problem \eqref{eq: weak formulation} has an infinite sequence of eigenvalues
\begin{align*}
0<\lambda_1 \leq \lambda_2 \leq \cdots \ \nearrow + \infty 
\end{align*}
and corresponding eigenfunctions $u_1, u_2, \ldots \in V_0$.

Numerical approaches like finite element methods (FEM) and isogeometric analysis (IGA) are based on Galerkin's principle. Let $V_{0h} \subset V_0$ be a $l$-dimensional subspace. Instead of the infinite-dimensional problem \eqref{eq: weak formulation}, we solve the following discrete eigenvalue problem: Find $u_h \in V_{0h}$ such that
\begin{align}
\label{eq: discrete weak formulation}
a( u_h,  v_h)	= \lambda_h \,  m( u_h,  v_h)  \quad \forall v_h \in V_{0h} \, ,
\end{align}
which yields a finite sequence of eigenvalues \cite[Chapter 5]{BabuskaOsborn1991}
\begin{align*}
0<\lambda_{1,h} \leq \lambda_{2,h}  \leq \cdots \leq  \lambda_{k,h} 
\end{align*}
and corresponding eigenfunctions $u_{1,h}, u_{2,h}, \dots, u_{k,h} \in V_{0h}$. Further, let $\{\varphi_1, \dots, \varphi_l\}$ be a basis of $V_{0h}$. Then, each eigenfunction $u_h$ can be represented as a linear combination of the basis functions, $u_h = \sum_{j=1}^l u_h^j \, \varphi_j$,
with coefficients $u_h^j \in \R$ for all $j \in \{1,...,l\}$. Inserting this into the discrete weak formulation \eqref{eq: discrete weak formulation} leads to the discrete matrix eigenvalue problem
\begin{align}
A \, \vec{u}_h  = \lambda_h \, M  \, \vec{u}_h  \,, 
\label{eq: matrix eigenvalue problem}
\end{align}
where 
\begin{align*}
A = \left(a(\varphi_j, \varphi_i)\right)_{i,j \in  \{1,2, \dots, l \} }, \ 
\vec{u}_h  = \left( u_h^j \right)_{j \in  \{1,2, \dots, l \} } \ \text{ and } 
M = \left(m(\varphi_j, \varphi_i) \right)_{i,j \in  \{1,2, \dots, l \} } \, ,
\end{align*}
which can be solved numerically. The approximated eigenfunctions and eigenvalues converge to the exact ones for $h \to 0$ if the spaces $V_{0h}$ and basis functions $\{\varphi_1, \dots, \varphi_l\}$ are chosen appropriately. This choice varies throughout different numerical methods. While FEM typically employ continuous piecewise polynomials, IGA makes use of B-splines and NURBS, a class of functions that will be introduced in the following section.

\section{Basics of isogeometric analysis}
\label{section: preliminaries and notation}
We present the essential basics of isogeometric analysis and establish necessary notation, largely following the framework presented in the review paper \cite{BeiraodaVeigaBuffaSangalliVazquez2014}. A more detailed introduction can be found in the books \cite{CottrellHughesBazilevs2009} on isogeometric analysis and \cite{Schumaker2007} on spline theory.

\subsection{Univariate B-splines and NURBS}
First, we introduce the concept of B-splines and NURBS in the univariate case and explain their application for the modeling of curves. Additionally, we provide a brief overview of common refinement procedures.

\subsubsection{Definition and properties}
\label{subsec: univariate splines}
Let $p \in \N_0$ and $n \in \N$. We call $\Xi := \{\xi_1, \xi_2, \dots, \xi_{n+p+1}\}$ a $p$-open knot vector if
\begin{align*}
\xi_1 = \xi_2 = \dots = \xi_{p+1} < \xi_{p+2} \leq \xi_{p+3} \leq \dots \leq \xi_{n-1} \leq \xi_{n} < \xi_{n+1} = \xi_{n+2} = \xi_{n+p+1} \, ,
\end{align*}
where $\xi_i \in \R$ for $i =1, \dots, n+p+1$ is called the $i$-th knot which is allowed to occur repeatedly. Without loss of generality, we assume that $\xi_1=0$ and $\xi_{n+p+1}=1$ and hence all the knots are contained in the unit interval $[0,1]$. Furthermore, we define the vector $Z = \{\zeta_1, \dots, \zeta_N\}$ of knots without repetitions, also called breakpoints, with
\begin{align*}
\Xi = \{\underbrace{\zeta_1, \dots, \zeta_1}_{m_1}, \underbrace{\zeta_2, \dots, \zeta_2}_{m_2}, \dots, \underbrace{\zeta_N, \dots, \zeta_N}_{m_N}\} \, ,
\end{align*}
where $N \in \N$ is the total number of pairwise different knots and $m_j \in \N$ for $j =1, \dots, N$ denotes the multiplicity of the breakpoint $\zeta_j$ such that $\sum_{j=1}^N m_j = n + p + 1$. For $p$-open knot vectors, $m_1=m_N=p+1$ always holds. Further, we assume $m_j \leq p+1$, $j=2,3,\dots,N-1$, for the multiplicities of the internal knots. 

The entries of $Z$ define a mesh on the unit interval $[0,1]$ and the local mesh size of the element $I_i = (\zeta_i, \zeta_{i+1})$ is called $h_i= \zeta_{i+1}- \zeta_i$ for $i=1,\dots,N-1$.
The B-Spline functions of degree $p$ are denoted by
\begin{align*}
\widehat{B}_{i,p} \colon [0,1] \to \R , \quad \zeta \mapsto \widehat{B}_{i,p}(\zeta) \, , \quad  i= 1, 2, \dots, n \, ,
\end{align*}
and can be defined recursively from the knot vector $\Xi$ using the Cox--de Boor formula, 
\begin{align*}
&\widehat{B}_{i,0}(\zeta) = \begin{cases} 1 \quad &\text{ if } \xi_i \leq \zeta < \xi_{i+1}\, , \\
0 \quad &\text{ else}  \, ,
\end{cases}
\end{align*}
and for $p \geq 1$,
\begin{align*}
&\widehat{B}_{i,p}( \zeta) = \frac{\zeta - \xi_i}{\xi_{i+p}-\xi_i} \widehat{B}_{i,p-1}(\zeta) + \frac{\xi_{i+p+1} - \zeta}{\xi_{i+p+1} - \xi_{i+1}}  \widehat{B}_{i+1,p-1}(\zeta) \, ,
\end{align*}
with $0/0:= 0$. The B-spline functions, besides other characteristics, are non-negative and form a partition of unity. Furthermore, they build a basis of the space of splines, that is, piecewise polynomials of degree $p$ with $k_j : = p-m_j$ continuous derivatives at the breakpoints $\zeta_j$, $j=1, \dots, N$. As $m_j \leq p+1$, we obtain $-1 \leq k_j \leq p-1$, where $k_j=-1$ occurs for the maximum knot multiplicity of $m_j=p+1$ and stands for a discontinuity at $\zeta_j$. We define the regularity vector $\bs k = \{k_1, \dots, k_N\}$ to collect the regularities at the breakpoints, remarking that $k_1=k_N=-1$ always holds for open knot vectors.

In Figure \ref{fig: basisfunp3k2}, we illustrate some cubic B-spline functions to showcase their possibly varying regularity properties at internal knots. The B-splines in Figure \ref{fig: basisfunp3k2 a} are constructed with the open knot vector $\Xi=\{0,0,0,0,0.25,0.5, \allowbreak 0.75,1,1,1,1\}$ and have $C^2$-regularity at the breakpoints $0.25$, $0.5$ and $0.75$. In contrast, Figure \ref{fig: basisfunp3k2 b} demonstrates B-spline functions defined by the open knot vector $\Xi=\{0,0,0,0,\allowbreak 0.25,0.25,0.25,\allowbreak 0.5,0.5,0.75,\allowbreak 1,1,1,1\}$, where repeated interior knot values lead to $C^0$, $C^1$ and $C^2$-regularity at the breakpoints $0.25$, $0.5$ and $0.75$, respectively.

\begin{figure}
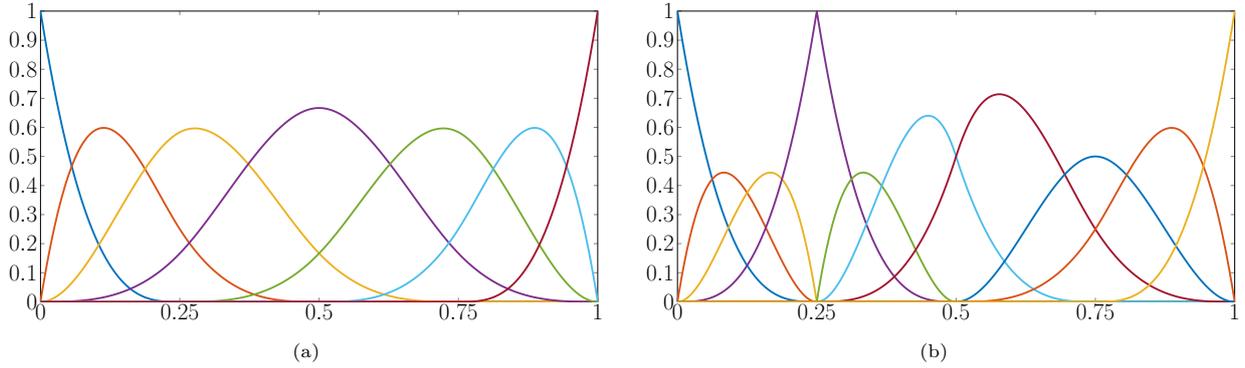

	\begin{center}
		\begin{subfigure}{0.495\textwidth}
			\begin{center}
				\resizebox{0.97\linewidth}{!}{\input{basisfun_1D_k=2_thicklines.tikz}}
			\end{center}
			\vspace{-3mm}
			\caption{}
			\label{fig: basisfunp3k2 a}
		\end{subfigure}
		\begin{subfigure}{0.495\textwidth}
			\begin{center}
				\hfill
				\resizebox{0.97\linewidth}{!}{\input{basisfun_1D_k=0-2_thicklines.tikz}}
			\end{center}
			\vspace{-3mm}
			\caption{}
			\label{fig: basisfunp3k2 b}
			\label{fig: test}
		\end{subfigure}
		\vspace{-2mm}
		\caption{Examples of cubic B-spline functions with distinct regularity properties, defined by different knot vectors $\Xi$. (a): $\Xi=\{0,0,0,0,0.25,0.5,0.75,1,1,1,1\}$. (b): $\Xi=\{0,0,0,0,0.25,0.25,0.25,0.5,0.5,0.75,1,1,1,1\}$.}
		\label{fig: basisfunp3k2}
	\end{center}
\end{figure}

Finally, we denote the univariate spline space spanned by the B-spline functions corresponding to $\Xi$ by
\begin{align*}
S_p(\Xi) = \text{span}\left\{\widehat{B}_{i,p} : i=1,2,\dots,n \right\} \, .
\end{align*}
The definition of each B-spline $\widehat{B}_{i,p}$ depends only on $p+2$ knots, which motivates the definition of a local knot vector $\Xi_{i,p} := \{\xi_i, \dots, \xi_{i+p+1}\}$. In this context, the notation will sometimes be adapted to $\widehat{B}_{i,p} (\zeta) = \widehat{B}[\Xi_{i,p}](\zeta)$. 

\subsubsection{B-spline and NURBS curves}
By associating each basis function $\widehat{B}_{i,p}$, $i=1,\dots,n$ with a control point $\bs c_i \in \R^m$, $m=2,3$, we define a spline curve as the linear combination of B-splines,
\begin{align*}
\bs C(\zeta) = \sum_{i=1}^n \bs c_i \, \widehat{B}_{i,p}(\zeta) \, .
\end{align*}
However, classical splines face restrictions in representing important geometries like conic sections. To overcome this limitation, non-uniform rational B-splines (NURBS) are introduced, see \cite{PieglTiller1995} for more details. Therefore, the weight function
\begin{align*}
W(\zeta) = \sum_{l=1}^{n} w_l \, \widehat{B}_{l,p} (\zeta)
\end{align*}
is determined by choosing positive constants $w_l>0$, $l=1,\dots, n$, which are called weights. The NURBS basis functions are then defined by
\begin{align*}
\widehat{N}_{i,p} (\zeta) = \frac{w_i  \widehat{B}_{i,p} (\zeta)}{\sum_{l=1}^n w_l \, \widehat{B}_{l,p} (\zeta)} = \frac{w_i  \widehat{B}_{i,p} (\zeta)}{W(\zeta)} \, , \quad i = 1, \dots, n \, .
\end{align*}
We denote the corresponding NURBS space by
\begin{align*}
N_p(\Xi, W) = \text{span}\left\{ \widehat{N}_{i,p} : i = 1, \dots, n \right \} \, .
\end{align*}
A NURBS curve can now be introduced in the same way as a spline curve,
\begin{align*}
\bs C(\zeta) = \sum_{i=1}^n \bs c_i \, \widehat{N}_{i,p}(\zeta) \, ,
\end{align*}
where again $\{\bs c_i\}_{i=1, \dots,n}$ is the set of control points $ \bs c_i \in \R^m$, $m=2,3$, that are chosen depending on the geometry to be parameterized.

\subsubsection{Refinement procedures}
\label{sec: refinement procedures}
It is a major principle of isogeometric analysis that the parameterization $\bs C(\zeta)$ does not change during refinement. Therefore, the refined spaces have to contain the space of the parameterization. We say that $S_p(\Xi)$ is a refinement of a given spline space $S_{p^0}(\Xi^0)$ if 
\begin{align}
\label{eq: refined spline spaces}
S_{p^0}(\Xi^0) \subset S_p(\Xi) \, .
\end{align}
When NURBS are used, it is also required that the weight function $W(\zeta)$ remains unchanged. We say that $N_p(\Xi, W)$ is a refinement of a given NURBS space $N_{p^0}(\Xi^0,W)$ if
\begin{align}
\label{eq: refined NURBS spaces}
N_{p^0}(\Xi^0,W) \subset N_p(\Xi,W) \, ,
\end{align}
which, as $W$ does not change, comes down to the condition \eqref{eq: refined spline spaces} for the corresponding spline spaces.

Spline and NURBS curves can be refined through knot insertion and degree elevation. Both procedures offer formulas to recalculate control points and weights such that the parameterization does not change. In total, three refinement types can be constructed by combining the algorithms. Knot insertion results in the classical mesh refinement known as $h$-refinement, while degree elevation enables $p$-refinement. Consecutive application of degree elevation and knot insertion is called $k$-refinement. This approach allows to maintain the regularity at the internal knots of $\Xi_0$ while enhancing differentiability at all other knots. For a more detailed description of these refinement procedures we refer the interested reader to \cite{HughesCottrellBazilevs2005}, where they were initially introduced.

\subsection{Bivariate B-splines and NURBS}
Multivariate B-splines and NURBS are constructed as tensor products of their univariate counterparts. This section provides a summary of the main concepts and notation. The focus of our paper is on the analysis of circular sectors, so we set the spatial dimension $d=2$ and specifically consider bivariate B-splines and NURBS. However, all definitions and notations can be easily extended to higher dimensions.

\subsubsection{Definition and properties}
\label{sec: Definition multivariate splines}
Let $n_l \in \N$, the degrees $p_l\in \N$ and the $p_l$-open knot vectors $\Xi_l = \{\xi_{l,1}, \xi_{l,2} \dots , \xi_{l,n_l+p+1}\}$ be given for $l=1,2$. We define the polynomial degree vector $\bs p = (p_1, p_2)$ and the bivariate knot vector $\bs \Xi = \Xi_1 \times \Xi_2$. Further, let $N_l \in \N$ be the number of knots without repetition for each direction such that the corresponding univariate knot vectors of breakpoints are given by $Z_l = \{\zeta_{l,1}, \zeta_{l,2} \dots , \zeta_{l,N_l}\}$ for $l=1,2$. They form a Cartesian grid in the parametric domain $\widehat{\Omega} = (0,1)^2$ which defines the parametric B\'ezier mesh $\widehat{\mathcal{M}}$, 
\begin{align}
\label{eq: parametric Bezier mesh}
\widehat{\mathcal{M}} := \left \{Q_{\bs j} \subset \widehat{\Omega} : Q_{\bs j} = Q_{(j_1,j_2)} = (\zeta_{1,j_1} , \zeta_{1,j_1 + 1}) \times (\zeta_{2,j_2} , \zeta_{2,j_2 + 1}), \bs j \in \bs J \right \} \, , 
\end{align}
where we introduce the set of multi-indices $\bs J= \{\bs j = (j_1, j_2) : 1 \leq j_l \leq N_l - 1,  l=1,2\}$. Moreover, we set $\bs I = \{\bs i = (i_1, i_2) : 1 \leq i_l \leq n_l,  l=1,2\}$ and denote the local bivariate knot vector for each multi-index $\bs i = (i_1,i_2)$ by
\begin{align*}
\bs \Xi_{\bs i, \bs p} = \Xi_{i_1,p_1} \times \Xi_{i_2,p_2} \, .
\end{align*}
Bivariate B-spline functions are then defined for every $\bs i \in \bs I$ via
\begin{align*}
\widehat{B}_{\bs i, \bs p} : [0,1]^2 \to \R , \quad \widehat{B}_{\bs i, \bs p} (\bs \zeta) = \widehat{B}[\Xi_{i_1,p_1}](\zeta_1) \,  \widehat{B}[\Xi_{i_2,p_2}](\zeta_2) 
\end{align*}
and the tensor product spline space in the parametric domain $\widehat{\Omega}$ is denoted by
\begin{align*}
S_{\bs p} (\bs \Xi ) 
=  \text{span} \left \{\widehat{B}_{\bs i, \bs p} (\bs \zeta), \bs i \in \bs I \right \} 
= S_{p_1} (\Xi_1) \times S_{p_2} (\Xi_2) \, .
\end{align*}
Further, we define multivariate NURBS basis functions
\begin{align*}
\widehat{N}_{\bs i, \bs p} (\bs \zeta) = \frac{w_{\bs i} \, \widehat{B}_{\bs i, \bs p} (\bs \zeta)} {W(\bs \zeta)}
\end{align*}
using the weight function
\begin{align}
W(\bs \zeta) = \sum_{\bs l \in\bs I}  w_{\bs l} \, \widehat{B}_{\bs l, \bs p} (\bs \zeta)\, ,
\label{eq: weight function}
\end{align}
where we choose weights $w_{\bs l} > 0$ for all $\bs l \in \bs I$. The space of NURBS on the parametric domain is finally denoted by
\begin{align*}
N_{\bs p}(\bs \Xi, W) = \text{span} \left \{\widehat{N}_{\bs i, \bs p} (\bs \zeta) , \bs i \in \bs I \right \} \, .
\end{align*}
Moreover, the refinement algorithms introduced in Section \ref{sec: refinement procedures} can be generalized to bivariate spline and NURBS spaces. 

\subsubsection{B-spline and NURBS surfaces}
\label{subsec: B-spline and NURBS surfaces}
B-spline and NURBS surfaces in $\R^m$, $m=2,3$, are defined as linear combinations of the tensor product functions introduced above,
\begin{align}
\bs F(\bs \zeta) = \sum_{\bs i \in \bs I} \bs c_{\bs i} \widehat{B}_{\bs i, \bs p} (\zeta) \quad \text{ and } \quad
\bs F(\bs \zeta) = \sum_{\bs i \in \bs I} \bs c_{\bs i} \widehat{N}_{\bs i, \bs p} (\zeta) \, ,
\label{eq: formula parameterization}
\end{align}
respectively, where, similar to the univariate case, each basis function is associated with a control point $\bs c_{\bs i} \in \R^m$, $\bs i \in \bs I$. The $\bs F$-image $\Omega = \bs F(\hat{\Omega})$ of the parametric domain $\hat{\Omega} = (0,1)^2$ is commonly referred to as the physical domain. Using this construction, exact parameterizations
\[\bs F \colon  \hat{\Omega} \to \Omega \]
of various types of domains $\Omega \subset \R^2$, including circular sectors, can be obtained. For further details, the interested reader may consult the books \cite{Farin1995,PieglTiller1995,CottrellHughesBazilevs2009}. 

To define a mesh in $\Omega$, we consider the image under $\bs F$ of the partition given by the knot vectors without repetitions, i.e., each element $Q_{\bs j} \in \widehat{\mathcal{M}}$ of the parametric B\'ezier mesh from \eqref{eq: parametric Bezier mesh} is mapped to an element $K_{\bs j} =\bs F(Q_{\bs j} )$ in the physical domain. We set
\begin{align*}
\mathcal{M}:= \left \{ K_{\bs j}  \subset \Omega : K_{\bs j}  = \bs F(Q_{\bs j}), \bs j \in \bs J \right\} \, ,
\end{align*}
which is commonly known as the physical B\'ezier mesh, or simply B\'ezier mesh. The meshes for the coarsest knot vector $\Xi^0$ will be denoted by $\widehat{\mathcal{M}_0}$ and $\mathcal{M}_0$.

\section{Isogeometric mesh grading for circular sectors}
\label{section: Isogeometric mesh grading}
In the following, we demonstrate explicitly how the constructions presented in Section \ref{section: preliminaries and notation} can be used to address the numerical solution of our model problem \eqref{eq: model problem} with isogeometric analysis. We begin by describing the single-patch polar-like NURBS parameterization of circular sectors and discuss the resulting spline spaces on the computational domain. These spaces go beyond the solution space of the weak problem \eqref{eq: weak formulation} due to the low regularity of the parameterization which is considered a variational crime. Finally, we propose a graded mesh refinement scheme to tackle the singularities of the eigenfunctions observed in Section \ref{section: regularity of the eigenfunctions} and explore the choice of the associated grading parameter.

Here, we concentrate again on the circular sector $\Omega$ with angle $\omega = 2\pi$, as illustrated in Figure \ref{fig: sketch of model domain b}, which represents a unit disk with a crack on the positive $x$-axis. Just as in Section \ref{subsection: regularity of eigenfunctions for omega = 2pi}, it serves as a prototype for all other circular sectors with smaller angles $\omega < 2\pi$, where the same procedure can be carried out in an analogous or even simplified manner.

\subsection{Single-patch parameterization and coarse meshes}
\label{subsection: Parameterization}

First, we construct the single-patch polar-like isogeometric parameterization $\bs F \colon \hat{\Omega} \to \Omega$ of the unit disk with crack. The approach can be described intuitively by the deformations shown in Figure \ref{fig: deformations} and has been employed similarly in \cite{OhKimJeong2014}. We combine the typical $9$-point NURBS discretization of circles with a linear component in radial direction. Various other possibilities for parameterizing circular sectors and disks have been studied in the literature \cite{Lu2009,TakacsJuettler2011,Takacs2012,NguyenKarciauskasPeters2014,HughesSangalli2017}. In particular, we use the open knot vectors
\[\Xi^0_{1} = \left\{0,0,1,1\right\} \quad  \text{ and } \quad \Xi^0_2 = \left\{0,0, 0,\frac14,\frac14, \frac12, \frac12, \frac34, \frac34,1,1,1 \right\} \] 
along with the control points $\bs c_{\bs i}$ and weights $w_{\bs i}$, $\bs i \in \bs I_0 = \{(i_1,i_2): 1 \leq i_1 \leq 2, 1\leq i_2 \leq 9\}$, as defined in Table \ref{tab: Control points and weights for unit disk with crack} and illustrated in Figure \ref{fig: control points}. Note that all the control points $\bs c_{(1,i_2)}$ for $i_2=1,\dots,9$ lie at the conical point $\bs S=(0,0)$ of the circular sector, causing the parametric edge $\{(0,\zeta_2): \zeta_2 \in [0,1]\}$ to collapse to this point in the physical domain. The corresponding edge is depicted in cyan in Figure \ref{fig: deformations} and will be referred to as the singular edge of $\widehat{\Omega}$ in this paper. We further direct the interested reader to \ref{sec: appendix explicit representation} for an explicit representation of the parameterization $\bs F$ and to \ref{sec: appendix comparison} for a comparison of $\bs F$ to the classic transformation between Cartesian and polar coordinates.

\begin{figure}[t]
	\begin{center}
		\begin{subfigure}{0.7\linewidth}
			\begin{center}
				\begin{tikzpicture}[decoration = {snake, pre length=3pt, post length=3pt,},thick, scale = 0.8]
				
				\fill[gray!30] (0,0) rectangle (3,3);
				
				\draw[orange] (0,3) -- (3, 3);
				\draw[cyan] (0,0) -- (0, 3);
				\draw[green] (0,0) -- (3, 0);
				\draw[blue] (3,0) -- (3, 3);
				\draw[dashed, line width=0.05mm] (0,0.75) -- (3, 0.75);
				\draw[dashed, line width=0.05mm] (0,1.5) -- (3, 1.5);
				\draw[dashed, line width=0.05mm] (0, 2.25) -- (3, 2.25);
				
				\node at (1.5,3.5) {\large $\hat{\Omega}$};
				\node[left,yshift=2] at(0,0) {\small $0$};
				\node[left] at (0, 0.75) {\small $\frac{1}{4}$};
				\node[left] at (0 ,1.5) {\small $\frac{1}{2}$};
				\node[left] at (0, 2.25) {\small $\frac{3}{4}$};
				\node[left] at(0, 3) {\small $1$};
				
				\node at (0,-0.3) {\small $0$};
				\node at (3,-0.3) {\small $1$};
				
				\draw[-stealth] (0.6, 0.15) -- (1.05,0.15) node[above, yshift = -1] {\small $\zeta_1$};
				\draw[-stealth] (0.6, 0.15) -- (0.6, 0.6) node[left, yshift = -4] {\small $\zeta_2$};
				
				\draw [-stealth, decorate](3.3,1.5) -- (4.2,1.5);
				
				\fill[gray!30] (4.5,1.5) -- (6,1.5) -- (6,3);
				
				\draw[orange] (4.5,1.5) -- (6, 3);
				\draw[blue] (6,1.5) -- (6, 3);
				\draw[green] (4.5,1.5) -- (6, 01.5);
				\fill[cyan] (4.5,1.5) circle (2pt);
				\draw[dashed, line width=0.05mm] (4.5,1.5) -- (6,1.875);
				\draw[dashed, line width=0.05mm] (4.5,1.5) -- (6,2.25);
				\draw[dashed, line width=0.05mm] (4.5,1.5) -- (6,2.625);
				
				\draw [-stealth, decorate](6.3,1.5) -- (7.2,1.5);
				
				\fill[gray!30] (8,1.5) -- (9.5,1.5) arc (0:90:1.5);
				
				\draw [->, very thick] (8,2) arc (90:360:0.5);
				
				\draw[orange] (8,1.5) -- (8, 3);
				\draw[green] (8,1.5) -- (9.5, 1.5);
				\draw[blue] (9.5,1.5) arc (0:90:1.5);
				\fill[cyan] (8,1.5) circle (2pt);
				\draw[dashed, line width=0.05mm] (8,1.5) [pin=330:a] {} -- ++ (22.5:1.5);
				\draw[dashed, line width=0.05mm] (8,1.5) [pin=330:a] {} -- ++ (45:1.5);
				\draw[dashed, line width=0.05mm] (8,1.5) [pin=330:a] {} -- ++ (67.5:1.5);
				
				\draw [-stealth, decorate] (9.8,1.5) -- (10.7,1.5);
				
				\fill[gray!30] (12.5,1.5) circle (1.5cm);
				
				\draw [blue] (12.5,1.5) circle (1.5cm);
				\draw[-,green] (12.5,1.52) -- (14,1.52);
				\draw[-,orange] (12.5,1.48) -- (14,1.48);		
				\fill [cyan] (12.5,1.5) circle (2pt);
				\draw[dashed, line width=0.05mm] (12.5,1.5) -- (12.5,3);
				\draw[dashed, line width=0.05mm] (11,1.5) -- (12.5,1.5);
				\draw[dashed, line width=0.05mm] (12.5,1.5) -- (12.5,0);
				
				\node at (12.5,3.5) {\large $\Omega$};
				\end{tikzpicture}
			\end{center}
			\vspace{-5mm}
			\subcaption{}
			\label{fig: deformations}
		\end{subfigure}
		\begin{subfigure}{0.29 \linewidth}
			\begin{center}
				\begin{tikzpicture}[scale = 0.8]
				\draw[blue, thick, fill = gray, fill opacity = 0.3] (1.5, 1.5) circle (1.5cm);
				\draw[orange,thick] (1.5,1.48) -- (3,1.48);
				\draw[green,thick] (1.5,1.52) -- (3,1.52);
				
				\draw[dashed] (1.5,1.5) -- (1.5,3);
				\draw[dashed] (1.5,1.5) -- (1.5,0);
				\draw[dashed] (1.5,1.5) -- (0,1.5);
				
				\fill[red] (3,1.5) circle (1.5pt) node[above right, black] {\small $\bs c_{(2,1)}$};
				\fill[red] (2.56066,2.56066) circle (1.5pt) node[above right, black] {\small $\bs c_{(2,2)}$} ;
				\fill[red] (1.5,3) circle (1.5pt) node[above, black] {\small $\bs c_{(2,3)}$};
				\fill[red] (0.43933,2.56066) circle (1.5pt)  node[above left, black] {\small $\bs c_{(2,4)}$};
				\fill[red] (0,1.5) circle (1.5pt) node[left,black] {\small $\bs c_{(2,5)}$};
				\fill[red] (0.43933,0.43933) circle (1.5pt) node[below left, black] {\small $\bs c_{(2,6)}$};
				\fill[red] (1.5,0) circle (1.5pt)  node[below, black] {\small $\bs c_{(2,7)}$};
				\fill[red] (2.56066,0.43933) circle (1.5pt) node[below right, black] {\small $\bs c_{(2,8)}$};
				\fill[red] (3,1.5) circle (1.5pt) node[below right, black] {\small $\bs c_{(2,9)}$};
				\fill[cyan] (1.5,1.5) circle (1.5pt);
				
				\node at (1.3,1.75) {\small $\bs c_{(1,i_2)}$ $\forall i_2$};
				\end{tikzpicture}
			\end{center}
			\vspace{-5mm}
			\subcaption{}
			\label{fig: control points}
		\end{subfigure}
	\end{center}
	\vspace{-5mm}
	\caption{(a) Deforming a unit square to a unit disk with crack. (b) Illustration of the control points $\bs c_{\bs i}$ for $\bs i \in \bs I_0 = \{\bs i = (i_1,i_2): 1 \leq i_1 \leq 2, 1\leq i_2 \leq 9\}$.}
	\label{fig: deformations and control points}
\end{figure}
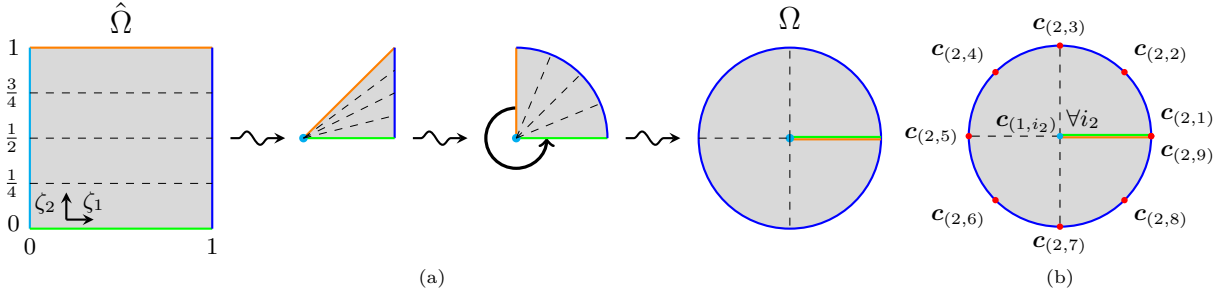

\begin{table}[t]
	\begin{center}
		\bgroup
		\def\arraystretch{1.1}%
		\setlength\tabcolsep{5pt}
		\begin{tabular}{ c | c c c c c c c c c } 
			$i_1$ & $1$ & $1$ & $1$ & $1$ & $1$ & $1$ & $1$ & $1$ & $1$ \\
			$i_2$ & $1$ & $2$ & $3$ & $4$ & $5$ & $6$ & $7$ & $8$ & $9$ \\ 
			$\bs c_{(i_1,i_2)}$ & $(0,0)$ & $(0,0)$ & $(0,0)$ & $(0,0)$ & $(0,0)$ & $(0,0)$ & $(0,0)$ & $(0,0)$ & $(0,0)$ \\
			$w_{(i_1,i_2)}$ & $1$ & $\frac{1}{\sqrt{2}}$ & $1$ & $\frac{1}{\sqrt{2}}$ & $1$ & $\frac{1}{\sqrt{2}}$ & $1$ & $\frac{1}{\sqrt{2}}$ & $1$ \\
			\newline \\
			\hline \\
			$i_1$ & $2$ & $2$ & $2$ & $2$ & $2$ & $2$ & $2$ & $2$ & $2$ \\ 
			$i_2$ & $1$ & $2$ & $3$ & $4$ & $5$ & $6$ & $7$ & $8$ & $9$ \\ 
			$\bs c_{(i_1,i_2)}$ & $(1,0)$ & $(\frac{1}{\sqrt{2}},\frac{1}{\sqrt{2}})$ & $(0,1)$ & $(-\frac{1}{\sqrt{2}},\frac{1}{\sqrt{2}})$ & $(-1,0)$ & $(-\frac{1}{\sqrt{2}},-\frac{1}{\sqrt{2}})$ & $(0,-1)$ & $(\frac{1}{\sqrt{2}},-\frac{1}{\sqrt{2}})$ & $(1,0)$ \\
			$w_{(i_1,i_2)}$ & $1$ & $\frac{1}{\sqrt{2}}$ & $1$ & $\frac{1}{\sqrt{2}}$ & $1$ & $\frac{1}{\sqrt{2}}$ & $1$ & $\frac{1}{\sqrt{2}}$ & $1$ 
		\end{tabular}
		\egroup
		\caption{Definition of control points $\bs c_{\bs i}$ and weights $w_{\bs i}$ for $\bs i \in \bs I_0 = \{\bs i = (i_1,i_2): 1 \leq i_1 \leq 2, 1\leq i_2 \leq 9\}$.}
		\label{tab: Control points and weights for unit disk with crack}
	\end{center}
\end{table}

The vectors of knots without repetitions,
\[
Z_1=\{0,1\} \quad  \text{ and } \quad Z_2 = \left \{0, \frac14, \frac12, \frac34, 1 \right \} \, ,
\]
form a Cartesian grid which defines the parametric B\'ezier mesh
\begin{align}
\label{eq: coarse parametric mesh}
\widehat{\mathcal{M}}_0 := \left\{Q_{(1,j_2)} =(0,1) \times \left(\frac{j_2-1}{4}, \frac{j_2}{4}\right) : 1 \leq j_2 \leq 4 \right\} \, .
\end{align}
With identity \eqref{eq: equality polar and NURBS coordinates} from \ref{sec: appendix comparison}, it follows that
\begin{align*}
K_{(1,j_2)} = \bs F(Q_{(1,j_2)}) = \left\{(r\cos\varphi, r\sin\varphi) \in \R^2: r \in (0,1), \varphi \in \left(\frac{j_2-1}{2} \, \pi , \frac{j_2}{2} \, \pi \right) \right \}
\end{align*}
for $j_2=1,\dots,4$. We obtain the physical B\'ezier mesh
\begin{align}
\label{eq: coarse physical mesh}
\mathcal{M}_0 := \left\{ K_{(1,j_2)} : 1 \leq j_2 \leq 4 \right\} \, .
\end{align}
Both meshes are illustrated in Figure \ref{fig: Coarse mesh}. Since they are constructed from the coarse knot vectors $\Xi_1^0$ and $\Xi_2^0$, we call them the coarse parametric and coarse physical B\'ezier meshes, respectively.

\begin{figure}
	\begin{center}
		\begin{tikzpicture}[baseline, scale = 0.65]
		
		\node at (2,4.5) {\large $\widehat{\mathcal{M}}_0 $};
		
		\fill[gray!30] (0,0) rectangle (4,4);
		
		\draw[-stealth] (4,0) -- (5, 0)  node[below, xshift = -2] {$\zeta_1$};
		\draw[-stealth] (0,4) -- (0, 5) node[left, yshift = -2] {$\zeta_2$};

		\draw[red] (0,0) -- (4, 0);
		\draw[red] (0,4) -- (4, 4);
		
		\draw[cyan] (0,0) -- (0, 4);
		\draw[blue] (4,0) -- (4, 4);
		
		\foreach \i in {1,...,3}{
			\draw[dashed] (0,\i) -- (4,\i);
		}
		
		\foreach \i in {1,...,4}{
			\node at (2,\i-0.5) {$Q_{(1,\i)}$};
		}
		
		\node[left, yshift = 2] at (0,0) {$0$};
		\node[left] at (0,1) {$\frac14$};
		\node[left] at (0,2) {$\frac24$};
		\node[left] at (0,3) {$\frac34$};
		\node[left] at (0,4) {$1$};
		
		\node[below, xshift = 2] at (0,0) {$0$};
		\node[below] at (4,0) {$1$};
		
		\draw [->, very thick] (6.3,2) to [out=30,in=150] (8.3,2) node[above] at (7.3,2.5) {\large $ \bs F$};
		
		%	\draw [->, very thick] (8.3,1.5) to [out=210,in=330] (6.3,1.5) node[below] at (7.3,1) {$F^{-1}$};
		\end{tikzpicture}
		\hspace*{12mm}
		\begin{tikzpicture}[baseline=-1.25cm, scale =0.725]
		
		\node at (0,2.5) {\large $\mathcal{M}_0$};
		
		\fill[gray!30] (0,0) circle (2);
		
		\foreach \i in {0,1,...,4}{
			\draw[blue] (\i*90:2) arc (\i*90:\i+90 + 90:2);
		}
		
		\foreach \i in {1,...,3}{
			\draw[dashed] (0:0) -- (\i*90:2);
		}
		\draw[red] (0,0) -- (2, 0);
		
		\foreach \i in {1,...,4}{
			\node at (\i*90-45:1) {$K_{(1,\i)}$};
		}	
		\fill[cyan] (0,0) circle (2pt);
		
		\end{tikzpicture}
		\caption{Illustration of the coarse parametric B\'ezier mesh $\widehat{\mathcal{M}}_0 = \left \{Q_{(1,j_2)}: j_2 = 1,2,3,4 \right \} $ and the physical B\'ezier mesh $\mathcal{M}_0 = \left \{K_{(1,j_2)} = \bs F(Q_{(1,j_2)}):j_2  = 1,2,3,4 \right \}$.}
		\label{fig: Coarse mesh}
	\end{center}
\end{figure}
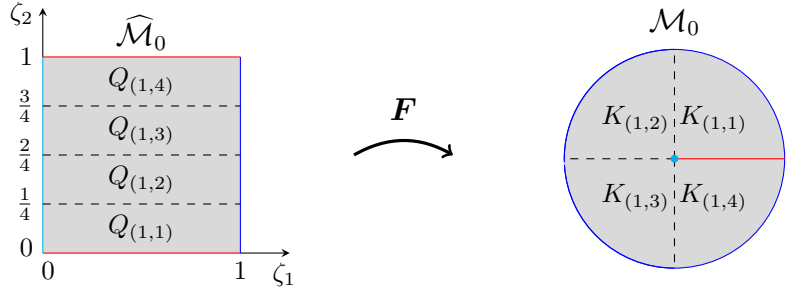

\subsection{Isogeometric approximation spaces}
\label{subsection: Approximation spaces in the physical domain}	

To establish a comprehensive framework for the numerical solution of model problem \eqref{eq: model problem}, we proceed to describe the isogeometric approximation spaces. Adopting an isoparametric approach, we use the discrete function space that defines the parameterization $\bs F$ also for the approximation space.

We begin with the bivariate spaces in the parametric domain, as defined in Section \ref{sec: Definition multivariate splines}. The tensor product spline space is given by
\begin{align*}
S_{\bs p^0} (\bs \Xi^0 ) 
=  \text{span} \left \{\widehat{B}_{\bs i, \bs p^0} (\bs \zeta), \bs i \in \bs I_0 \right \} \, , 
\end{align*}
where $\bs \Xi^0 = \Xi_1^0 \times \Xi_2^0$ and $\bs p^0 = (p_1^0,p_2^0)=(1,2)$. Likewise, the bivariate NURBS space reads
\begin{align*}
N_{\bs p^0}(\bs \Xi^0, W) = \text{span} \left \{\widehat{N}_{\bs i, \bs p^0} (\bs \zeta) , \bs i \in \bs I_0 \right \} \, .
\end{align*}
Further, we can apply the refinement algorithms from Section \ref{sec: refinement procedures} to obtain finer spline and NURBS spaces $S_{\bs p} (\bs \Xi )$ and $ N_{\bs p}(\bs \Xi, W)$ in the parametric domain which satisfy the relations \eqref{eq: refined spline spaces} and \eqref{eq: refined NURBS spaces}.

In standard literature \cite{BazilevsBeiraoDaVeigaCottrellHughesSangalli2006,BeiraodaVeigaChoSangalli2012,BeiraodaVeigaBuffaSangalliVazquez2014}, the space of NURBS in the physical domain is defined by the $\bs F$-image of the parametric spaces. More precisely, if $
\widehat{V}_h = N_{\bs p}(\bs \Xi, W) = \text{span} \{\widehat{N}_{\bs i, \bs p} (\bs \zeta) , \bs i \in \bs I \} $
is a refinement of the coarse parametric NURBS space $N_{\bs p^0}(\bs \Xi^0, W)$, the approximation space in the physical domain is defined by
\begin{align*}
V_h = \left \{f \circ \bs F^{-1} : f \in \widehat{V}_h \right \} \, .
\end{align*}
A basis of this space can be provided by a push-forward of the parametric basis, 
\begin{align}
\label{eq: basis functions regular case}
V_h = \text{span} \left \{N_{\bs i, \bs p} (\bs x) := \widehat{N}_{\bs i, \bs p} \circ \bs F^{-1}(\bs x) , \bs i \in \bs I \right \} \, .
\end{align}
However, this approach requires certain regularity assumptions on the parameterization, for more details see \cite{BeiraoDaVeigaBuffaRivasSangalli2011}. These requirements are not fulfilled by the polar-like mapping that we constructed in Section \ref{subsection: Parameterization}. Hence, using the approximation space \eqref{eq: basis functions regular case} leads to a variational crime which will be investigated in detail in the next section.

\subsection{Variational crimes}
\label{subsec: variational crime}	
It has been shown in \cite{TakacsJuettler2011,TakacsJuettler2012} that the lack of regularity in the isogeometric parameterization results in some of the basis functions \eqref{eq: basis functions regular case} not belonging to $H^1(\Omega)$, which implies $V_h \not \subset H^1(\Omega)$. As outlined in Section \ref{subsec: numerical solution process}, this constitutes a variational crime since the Galerkin principle requires the discrete space to be a subset of the solution space, i.e., $V_h \subset V = H^1(\Omega)$. However, the numerical results presented in Section \ref{section: Numerical results} are fully satisfying. Therefore, we intentionally do not take any steps to repair this variational crime and focus on demonstrating the numerical robustness of IGA with respect to the singular parameterization of circular sectors. Besides, we remark that no theoretical error estimates are developed in this contribution, but our results can still be validated as we consider a model problem with known exact solutions.

Nevertheless, it needs to be discussed why a reasonable computational method is achieved despite committing a variational crime. The matrix $A$ in the eigenvalue problem \eqref{eq: matrix eigenvalue problem} is not well-defined, since for all basis functions $N_{\bs i, \bs p}, N_{\bs j, \bs p} \in V_h \setminus V$, it holds
\begin{align}
\label{eq: full integral stiffness matrix}
a (N_{\bs i, \bs p}, N_{\bs j, \bs p}) = \int_\Omega \nabla N_{\bs i, \bs p} \nabla N_{\bs j, \bs p} \, \mathrm{d} \bs x = \infty \, .
\end{align}
However, in the computing package GeoPDEs \cite{Vazquez2016,deFalcoRealiVazquez2011} and other standard IGA or FEM software, the values of the bilinear form are not computed by exact integration but by numerical approximation with a suitable quadrature rule. Therefore, we assume that on each B\'ezier element $K_{\bs k} \in \mathcal{M}$, a quadrature rule is defined by a set of $n_{\bs k}$ quadrature points and weights,
\[ \bs x_{l,{\bs k}} \in K_{\bs k} \quad \text{and} \quad  w_{l,{\bs k}} \in \R\, , \quad  l=1, 2, \dots, n_{\bs k}\, ,\bs k \in \bs J \, .\]
An approximated bilinear form is then used to generate the entries of the stiffness matrix,
\begin{align}
\label{eq: approximate stiffness matrix entries}
\widetilde{a}(N_{\bs i,\bs p},  N_{\bs j,\bs p}) = \sum_{\bs k \in \bs J}\sum_{l=1}^{n_{\bs k}} \nabla N_{\bs i,\bs p} (\bs x_{l,\bs k}) \, \nabla  N_{\bs j,\bs p} (\bs x_{l,\bs k}) \, w_{l,\bs k} < \infty \quad \text{for all } \bs i, \bs j \in \bs I\, .
\end{align}
Thus, only the values of $ \nabla N_{\bs i,\bs p}$ are computed at a limited number of points and not the full integral \eqref{eq: full integral stiffness matrix}. Analogously, an approximate bilinear form $\widetilde{m}$ replaces the integral-based form $m$. Hence, instead of system \eqref{eq: matrix eigenvalue problem}, the matrix eigenvalue problem
\begin{align*}
\widetilde{A} \, \vec{u}_h  = \lambda_h \, \widetilde{M} \, \, \vec{u}_h 
\end{align*}
is solved, where
\begin{align*}
\widetilde{A} = \left(\widetilde{a}(N_{\bs i,\bs p}, N_{\bs j,\bs p})\right)_{\bs i, \bs j \in \bs I} \quad \text{ and } \quad
\widetilde{M}=  \left(\widetilde{m}(N_{\bs i,\bs p}, N_{\bs j,\bs p})\right)_{\bs i, \bs j \in \bs I} \, .
\end{align*}
However, the use of quadrature integrals is a very common variational crime in FEM or IGA. Hence, in some sense, the commonly committed variational crime hides the specific crime $V_h \not \subset V$, which is caused by the singular parameterization here.

\begin{remark}
	\label{remark: idea to treat singular basis functions}
	As mentioned in Section \ref{subsection: Parameterization}, there exist several other ways to parameterize circular sectors than using the polar-like mapping $\bs F$. In particular, a multi-patch approach based on biquadratic NURBS can be employed, which has been proposed in the numerical examples of \cite{LangerMantzaflarisMooreToulopoulos2015}. The multi-patch parameterization contains another type of singularities which are located on the circular boundary \cite{TakacsJuettler2011}. Hence, the singularity of the Laplace eigenfunctions does not coincide with the singularities of the isogeometric mapping, which may simplify the problem. However, we prefer the presented single-patch approach for the following reasons: 
	\begin{itemize}
		\item As indicated in Section \ref{sec: introduction}, previous results about spectral approximation properties of IGA in the literature have been shown for single-patch domains. 
		\item A polar-like discretization seems to be more natural and intuitive when considering circular sectors in an isogeometric context.
		\item The multi-patch approach leads to redundant control points at the patch junctions which are absent when using only one singularly mapped patch.
		\item Introducing a numerical framework becomes more complicated when multiple patches are used instead of a single one. Besides, standard approximation theory for IGA can not be applied. Even though the singularities in the multi-patch parameterization of circular sectors are weaker than the singularity in our single-patch mapping $\bs F$, the necessary regularity assumptions, as discussed in \cite{BazilevsBeiraoDaVeigaCottrellHughesSangalli2006,BeiraodaVeigaBuffaSangalliVazquez2014}, are not fulfilled. Up to our knowledge, the multi-patch discretization of circular sectors has only been considered numerically so far, but not in the error analysis \cite{LangerMantzaflarisMooreToulopoulos2015}.
	\end{itemize}
\end{remark}

\subsection{Single-patch mesh grading}
\label{subsection: Construction and properties of graded mesh}
After setting up the isogeometric framework for the numerical solution of our model problem \eqref{eq: model problem}, we now propose a modified refinement algorithm to tackle the singularities of the Laplace eigenfunctions discussed in Section \ref{section: regularity of the eigenfunctions}.

\subsubsection{Graded mesh refinement scheme}
\label{subsec: Mesh refinement scheme}
Isogeometric $h$-refinement is achieved through knot insertion, recall Section \ref{sec: refinement procedures}. In standard IGA, the univariate knot vectors are refined uniformly, with newly inserted knots being equidistantly distributed. We propose a more flexible knot insertion method that allows local mesh refinement towards known singularities. The key concept is to abandon uniformity in favor of knots concentrated around singularities, creating what we refer to as \textit{graded $h$-refinement}. The idea of graded meshes is not new; it has been proven to be a powerful tool for local a priori refinement of finite elements towards corner singularities \cite{ApelSaendigWhiteman1996,ApelNicaise1998,Babuska1970,OganesjanRukhovets1968}. More recently, a similar concept has been proposed for IGA in a multi-patch context, but the isogeometric parameterization is assumed to be smooth, at least in the points towards which the mesh is locally refined \cite{LangerMantzaflarisMooreToulopoulos2015}. However, this requirement is not satisfied by the single-patch parameterization $\bs F$ that we employ in this paper.

We begin with a detailed description of the standard uniform refinement procedure for single-patch circular sectors and subsequently introduce the proposed modification to achieve graded $h$-refinement. As a starting point, we recall the coarse parametric and physical B\'ezier meshes \eqref{eq: coarse parametric mesh} and \eqref{eq: coarse physical mesh}, respectively, both of which are illustrated in Figure \ref{fig: Coarse mesh}. It is crucial to ensure that the parameterization $\bs F$ and the weight function $W$ remain unchanged during refinement, as outlined in the conditions \eqref{eq: refined spline spaces} and \eqref{eq: refined NURBS spaces}. This can be done in practice by bisecting each interval of the coarse mesh a certain number of times. For instance, let $J_1, J_2 \in \N$ such that $J_2/4 \in \N$ and define $h_1:=\frac{1}{J_1}$ and $h_2:=\frac{1}{J_2}$. Then, the knot vectors
\begin{align}
\label{eq: uniform refinement knot vector 1}
\Xi_1^{h_1} &= \left \{ 0,0, \frac{1}{J_1}, \frac{2}{J_1}, \dots, \frac{J_1-1}{J_1},1,1 \right \} \, , \\
\Xi_2^{h_2} &= \left \{0,0,0, \frac{1}{J_2}, \frac{2}{J_2}, \dots, \frac{J_2/4-1}{J_2}, \frac{J_2/4}{J_2}, \frac{J_2/4}{J_2}, \frac{J_2/4+1}{J_2}, \frac{J_2/4+2}{J_2}, \dots, \frac{J_2/2-1}{J_2},\frac{J_2/2}{J_2}, \frac{J_2/2}{J_2}, \dots, 1,1,1 \right \} \nonumber
\end{align}
are uniform refinements of the coarse knot vectors $\Xi_1^0$ and $\Xi_2^0$, respectively. The vectors of knots without repetitions are simply given by
\begin{align*}
Z_1^{h_1} &= \left \{0, \frac{1}{J_1}, \frac{2}{J_1}, \dots, \frac{J_1-1}{J_1},1 \right \} 
= \{ \zeta_{1,j_1} := (j_1-1) h_1 : 1 \leq j_1 \leq J_1+1\}\, , \\
Z_2^{h_2} &= \left \{0, \frac{1}{J_2}, \frac{2}{J_2}, \dots, \frac{J_2-1}{J_2},1 \right \} 
= \{ \zeta_{2,j_2} := (j_2-1) h_2 : 1 \leq j_2 \leq J_2+1\} \, .
\end{align*}
We obtain the refined parametric B\'ezier mesh 
\begin{align}
\label{eq: uniform refined parametric mesh}
\widehat{\mathcal{M}}
& = \left\{Q_{\bs j} \subset \hat{\Omega}: Q_{\bs j} = Q_{(j_1,j_2)} =\left(\zeta_{1,j_1}, \zeta_{1,j_1+1}\right) \times \left(\zeta_{2,j_2}, \zeta_{2,j_2+1}\right) ,\bs j \in \bs J \right\}
\end{align}
with $\bs J := \{ \bs j = (j_1,j_2): 1 \leq j_1 \leq J_1, 1 \leq j_2 \leq J_2\}$. Each element of the coarse mesh $\widehat{\mathcal{M}_0}$, given by \eqref{eq: coarse parametric mesh}, is uniformly split up $J_1$ times in $\zeta_1$-direction and $J_2/4$ times in $\zeta_2$-direction. The same effect holds for the refined physical B\'ezier mesh 
\begin{align*}
\mathcal{M}
& =  \{ K_{\bs j} \subset \Omega: K_{\bs j} = \bs F(Q_{\bs j}) , \bs j \in \bs J\}\, .
\end{align*}
In total, the resulting meshes contain $J_1 \cdot J_2$ uniformly refined elements. An exemplary illustration of such parametric and physical meshes after uniform refinement with $J_1=6$ and $J_2 = 16$ is presented in Figure \ref{fig: refined meshes uniform}.

Now, our objective is to adjust the refinement process to achieve a physical B\'ezier mesh that is locally refined around the conical point of the circular sector. When the isogeometric mapping $\bs F$ is applied to the closure of the parametric domain, the singular edge $\{(0,\zeta_2) : \zeta_2 \in [0,1]\}$ is mapped onto the singular point $\bs S= (0,0)$. Hence, we need to grade the knots in the parametric domain towards the singular edge to obtain a finer mesh in the physical domain locally near the singularity. To this end, we introduce a \textit{grading parameter} $\mu \in (0,1]$ and modify the knot insertion process in $\zeta_1$-direction. Instead of the vector \eqref{eq: uniform refinement knot vector 1}, we use the graded knot vector
\begin{align}
\label{eq: knot vector graded mesh refinement}
\Xi_1^{h_1,\mu} = \{0,0, (h_1)^{\frac{1}{\mu}}, (2 h_1)^{\frac{1}{\mu}} , \dots, ((J_1-1)h_1)^{\frac{1}{\mu}},1,1\}
\end{align}
The graded vector of knots without repetitions is then given by
\begin{align}
\label{eq: graded vector of breakpoints}
Z_1^{h_1,\mu} = \{ \zeta_{1,j_1}^\mu := \left((j_1-1) h_1\right)^{\frac{1}{\mu}} : 1 \leq j_1 \leq J_1+1\} \, .
\end{align}
Note that the choice $\mu = 1$ leads to uniform refinement. Thus, the graded refinement scheme is a generalization of the uniform version. In $\zeta_2$-direction, no adjustments are needed, and we employ the same knot vector $\Xi_2^{h_2}$ and vector of breakpoints $Z_2^{h_2}$ as in the uniform refinement procedure. By combining the two directions, we obtain the graded parametric mesh
\begin{align*}
\widehat{\mathcal{M}}^\mu
= \left\{Q_{\bs j}^\mu \subset \hat{\Omega}: Q_{\bs j}^\mu =Q_{(j_1,j_2)}^\mu =\left(\zeta_{1,j_1}^\mu, \zeta_{1,j_1+1}^\mu\right) \times \left(\zeta_{2,j_2}, \zeta_{2,j_2+1}\right) , \bs j \in \bs J \right\} \, 
\end{align*}
which is locally refined towards the singular edge. The corresponding physical B\'ezier mesh 
\begin{align*}
\mathcal{M}^\mu
& = \left\{K_{\bs j}^\mu \subset \Omega : K_{\bs j}^\mu = \bs F (Q_{\bs j}^\mu), \bs j \in \bs J \right\}
\end{align*}
is locally refined towards the conical point of the circular sector. In Figure \ref{fig: refined meshes graded}, we illustrate this effect by depicting the graded parametric and physical B\'ezier meshes after refining with $J_1=6$ and $J_2=16$ and a grading parameter of $\mu=1/2$.

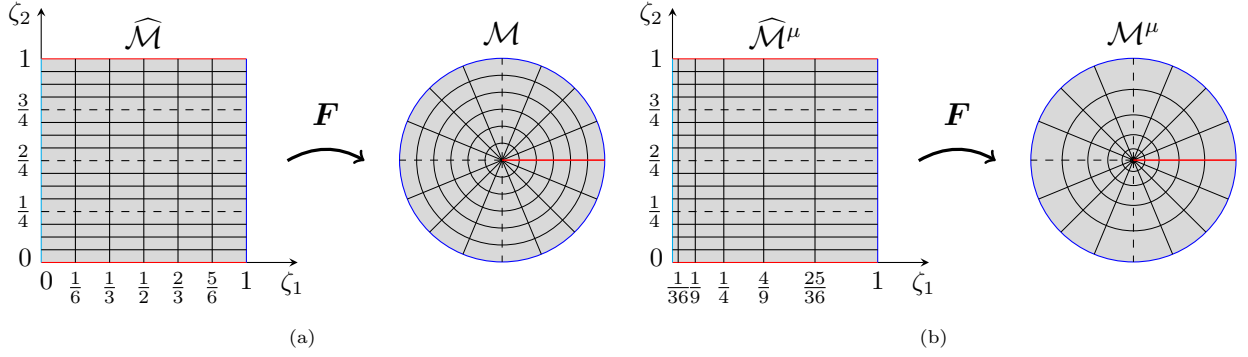
\begin{figure}[t]
	\begin{center}
		\begin{subfigure}{0.495\linewidth}
			\begin{center}
				\begin{tikzpicture}[baseline, scale = 0.675]			
				\def \N {5}			% Anzahl der Unterteilungen in xi-Richtung
				\def \hxi{1/6}		% Elementgröße in xi-Richtung
				
				\def \M {3} 		% Anzahl der Elemente in eta-Richtung
				\def \heta{1/4}		% Elementgröße in eta-Richtung
				
				\node at (2,4.5) {\large $\widehat{\mathcal{M}}$};
				
				\fill[gray!30] (0,0) rectangle (4,4);
				
				\draw[-stealth] (4,0) -- (5, 0)  node[below, xshift = -2] {$\zeta_1$};
				\draw[-stealth] (0,4) -- (0, 5) node[left, yshift = -2] {$\zeta_2$};
				
				\draw[red] (0,0) -- (4, 0);
				\draw[red] (0,4) -- (4, 4);
				
				\foreach \i in {1,...,\N}{
					\draw (\i*\hxi*4,0) -- (\i*\hxi*4,4);
				}
				
				\draw[cyan] (0,0) -- (0, 4);
				\draw[blue] (4,0) -- (4, 4);
				
				\foreach \i in {1,...,3}{
					\draw[dashed] (0, \i) -- (4, \i);
				}
				
				\foreach \j in {0,...,3}{
					\foreach \i in {1,...,\M}{
						\draw (0, \i*\heta + \j) -- (4, \i*\heta + \j);
					}
				}
				
				\node[left, yshift = 2] at (0,0) {$0$};
				\node[left] at (0,1) {$\frac14$};
				\node[left] at (0,2) {$\frac24$};
				\node[left] at (0,3) {$\frac34$};
				\node[left] at (0,4) {$1$};
				
				\node[below, xshift = 2] at (0,0) {$0$};
				\node[below] at (\hxi*1*4,0) {$\frac16$};
				\node[below] at (\hxi*2*4,0) {$\frac13$};
				\node[below] at (\hxi*3*4,0) {$\frac12$};
				\node[below] at (\hxi*4*4,0) {$\frac23$};
				\node[below] at (\hxi*5*4,0) {$\frac56$};
				\node[below] at (4,0) {$1$};
				
				\draw [->, very thick] (4.8,2) to [out=30,in=150] (6.3,2) node[above] at (5.55,2.5) {\large $\bs F$};
				\end{tikzpicture}
				\hspace*{2mm}
				\begin{tikzpicture}[scale = 1.35]		
				
				\def \N {5}			% Anzahl der Unterteilungen in xi-Richtung
				\def \hxi{1/6}		% Elementgröße in xi-Richtung
				
				\node at (0,1.25) {\large $\mathcal{M}$};
				
				\fill[gray!30] (0,0) circle (1);
				
				\foreach \i in {0,...,\N}{
					\draw [line width=0.05mm] (0:0) circle (\i*\hxi);
				}
				\draw [blue] (0:0) circle (1);
				
				% formula: \x = sin(\heta*j*pi/2)
				\foreach \x in {{0.382},0.707, 0.924}{
					\draw [line width=0.05mm] (0,0) -- (\x,{sqrt(1-\x*\x)});
					\draw [line width=0.05mm] (0,0) -- (-\x,{sqrt(1-\x*\x)});
					\draw [line width=0.05mm] (0,0) -- (-\x,-{sqrt(1-\x*\x)});
					\draw [line width=0.05mm] (0,0) -- (\x,-{sqrt(1-\x*\x)});
				}	
				
				\draw [line width=0.05mm, dashed] (0,0) -- (0,1);
				\draw [line width=0.05mm, dashed] (0,0) -- (-1,0);
				\draw [line width=0.05mm, dashed] (0,0) -- (0,-1);		
				\draw [-,red, line width=0.2mm] (0:0) -- (0:1);
				
				\end{tikzpicture}
				\subcaption{}
				\label{fig: refined meshes uniform}
			\end{center}
		\end{subfigure}
		\hfill		
		\begin{subfigure}{0.495\linewidth}
			\begin{center}		
				\begin{tikzpicture}[baseline, scale = 0.675]
				
				\def \N {5}			% Anzahl der Unterteilungen in xi-Richtung
				\def \hxi{1/6}		% Elementgröße in xi-Richtung
				
				\def \M {3} 		% Anzahl der Elemente in eta-Richtung
				\def \heta{1/4}		% Elementgröße in eta-Richtung
				
				\node at (2,4.5) {\large $\widehat{\mathcal{M}}^\mu$};
				
				\fill[gray!30] (0,0) rectangle (4,4);
				
				\draw[-stealth] (4,0) -- (5, 0)  node[below, xshift = -2] {$\zeta_1$};
				\draw[-stealth] (0,4) -- (0, 5) node[left, yshift = -2] {$\zeta_2$};

				\draw[red] (0,0) -- (4, 0);
				\draw[red] (0,4) -- (4, 4);
				
				\foreach \i in {1,...,\N}{
					\draw (\i*\i*\hxi*\hxi*4,0) -- (\i*\i*\hxi*\hxi*4,4);
				}
				
				\draw[cyan] (0,0) -- (0, 4);
				\draw[blue] (4,0) -- (4, 4);
				
				\foreach \i in {1,...,3}{
					\draw[dashed] (0, \i) -- (4, \i);
				}
				
				\foreach \j in {0,...,3}{
					\foreach \i in {1,...,\M}{
						\draw (0, \i*\heta + \j) -- (4, \i*\heta + \j);
					}
				}
				
				\node[left, yshift = 2] at (0,0) {$0$};
				\node[left] at (0,1) {$\frac14$};
				\node[left] at (0,2) {$\frac24$};
				\node[left] at (0,3) {$\frac34$};
				\node[left] at (0,4) {$1$};
				
				\node[below] at (\hxi*\hxi*1*1*4,0) {$\frac{1}{36}$};
				\node[below] at (\hxi*\hxi*2*2*4,0) {$\frac{1}{9}$};
				\node[below] at (\hxi*\hxi*3*3*4,0) {$\frac{1}{4}$};
				\node[below] at (\hxi*\hxi*4*4*4,0) {$\frac{4}{9}$};
				\node[below] at (\hxi*\hxi*5*5*4,0) {$\frac{25}{36}$};
				\node[below] at (4,0) {$1$};
				
				\draw [->, very thick] (4.8,2) to [out=30,in=150] (6.3,2) node[above] at (5.55,2.5) {\large $\bs F$};
				\end{tikzpicture}
				\hspace*{2mm}	
				\begin{tikzpicture}[scale = 1.35]
				
				\def \N {5}			% Anzahl der Unterteilungen in xi-Richtung
				\def \hxi{1/6}		% Elementgröße in xi-Richtung
				
				\node at (0,1.25) {\large $\mathcal{M}^\mu$};
				
				\fill[gray!30] (0,0) circle (1);
				
				\foreach \i in {0,...,\N}{
					\draw [line width=0.05mm] (0:0) circle (\i*\i*\hxi*\hxi);
				}
				\draw [blue] (0:0) circle (1);
				
				\foreach \x in {0.382,0.707, 0.924}{
					\draw [line width=0.05mm] (0,0) -- (\x,{sqrt(1-\x*\x)});
					\draw [line width=0.05mm] (0,0) -- (-\x,{sqrt(1-\x*\x)});
					\draw [line width=0.05mm] (0,0) -- (-\x,-{sqrt(1-\x*\x)});
					\draw [line width=0.05mm] (0,0) -- (\x,-{sqrt(1-\x*\x)});
				}	
				
				\draw [line width=0.05mm, dashed] (0,0) -- (0,1);
				\draw [line width=0.05mm, dashed] (0,0) -- (-1,0);
				\draw [line width=0.05mm, dashed] (0,0) -- (0,-1);		
				\draw [-,red,line width=0.2mm] (0:0) -- (0:1);
				
				\end{tikzpicture}
				\subcaption{}
				\label{fig: refined meshes graded}
			\end{center}
		\end{subfigure}	
		\vspace{-2mm}
		\caption{Parametric and physical B\'ezier meshes $\widehat{\mathcal{M}}$ and $\mathcal{M}$ after refining with $J_1=6$ and $J_2= 16$. Solid black lines are used for new subdivisions and dashed black lines for the initial coarse meshes $\widehat{\mathcal{M}_0}$ and $\mathcal{M}_0$. (a) Uniform refinement (b) Graded refinement with grading parameter $\mu =1/2$.}
		\label{fig: refined meshes}		
	\end{center}
\end{figure}

\subsubsection{Anisotropic elements in the graded meshes}
\label{subsec: anisotropic meshes}
Both the graded parametric and physical B\'ezier meshes contain anisotropic elements. The aspect ratio of the rectangular parametric elements $Q_{\bs j} \in \widehat{\mathcal{M}}^\mu$ close to the singular edge of $\widehat{\Omega}$ is given by
\begin{align}
\label{eq: aspect ratio parametric domain}
\frac{h_2}{h_1 ^{1/\mu}} = C h^{1-\frac{1}{\mu}} \to \infty \quad \text{ for } h_1=h_2=h \to 0  \text{ and } \mu <1 \, .
\end{align}
Thus, the elements are highly stretched in $\zeta_2$-direction. This effect is fully induced by the mesh grading and depends on the grading parameter $\mu$. For $\mu=1$ and $\frac{1}{C} h_2 \leq h_1 \leq C h_2$, that is, for evenly uniform refinement in both directions, the aspect ratio \eqref{eq: aspect ratio parametric domain} is constant and the mesh is isotropic.

In contrast, the latter does not hold for the physical B\'ezier mesh. We compute the aspect ratio of the elements $K_{\bs j} \in \mathcal{M}^\mu$ close to the singular point $\bs S$ using the lengths of their largest edges,
\begin{align}
\label{eq: aspect ratio physical domain}
\frac{h_1^{1/\mu}}{h_1^{1/\mu}h_2} = \frac{1}{h} \to \infty  \quad \text{ for } h_1=h_2=h \to 0 \, .
\end{align}
The elements are highly stretched in radial direction, corresponding to the $\zeta_1$-direction in the parametric domain. Hence, the stretching direction is reversed from the parametric to the physical mesh. Moreover, the physical mesh elements are anisotropic regardless of the mesh grading as the aspect ratio \eqref{eq: aspect ratio physical domain} does not depend on the grading parameter $\mu$. Consequently, the anisotropy in the physical mesh is exclusively induced by the singular parameterization $\bs F$. In Section \ref{subsec: hierarchical refinement}, we explore a slight modification of our refinement approach to avoid this effect.

\subsubsection{Choice of the grading parameter}
\label{subsec: choice of grading parameter}

It remains to be discussed how to choose the grading parameter $\mu$ such that optimal convergence orders for the singular solutions and corresponding eigenvalues are achieved. Here, we capitalize on existing knowledge about graded finite element meshes. While there is extensive literature on this topic, we only list a few important works here \cite{ApelNicaise1998,ApelSaendigWhiteman1996,ApelMelenk2017,Babuska1970,OganesjanRukhovets1968}.

According to the results from Section \ref{section: regularity of the eigenfunctions}, the strongest singularity that Laplace eigenfunctions of circular sectors can possibly have is of type $r^{\nu_1} $ with $\nu_1 = \frac{\pi}{\omega} \geq \frac12$. Therefore, if we consider an arbitrary eigenfunction $u = u_{\nu_k,m}$, where $k \in \N_0$ and $m \in \N$ are not known a priori, we can only expect that $u \in H^{s}(\Omega)$ for all $s<\nu_1+1$. If we restrict ourselves to classical Sobolev spaces, we can assume no more than $u \in H^1(\Omega)$ for circular sectors with angles $\omega > \pi$. Besides, it has been shown that the convergence of the discrete eigenvalues can be described in terms of the $H^1(\Omega)$-error of the corresponding Galerkin approximations of the eigenfunctions \cite{BabuskaAziz1972}. Hence, we incorporate a grading parameter that has been proven effective to achieve optimal convergence of the $H^1(\Omega)$-error, which is of order $p$ for finite elements of degree $p$. The required conditions are \cite{ApelMelenk2017}
\begin{align}
\label{eq: grading parameter simplified}
\mu < \frac{\nu_1}{p}  \quad \text{ and } \quad \mu \leq 1 \, .
\end{align}
We transfer these conditions to IGA for NURBS of degree $\bs p=(p_1,p_2)$ with $p =\min(p_1,p_2)$. However, there is still some freedom how to precisely choose $\mu$ for a numerical computation and several guidelines have been discussed in the literature, see for instance \cite{ApelMelenk2017}. Our experience has shown that
\begin{align}
\label{eq: grading parameter concrete}
\mu = 0.9 \cdot \frac{\nu_1}{p}
\end{align}
is a good choice, provided that $\frac{\nu_1}{p} < 1$.

In general, it is crucial to be aware of varying regularity properties of the Laplace eigenmodes of circular sectors. Eigenfunctions of low regularity are approximated more accurately by graded meshes due to the optimal convergence which is not achieved by uniform meshes. In contrast, smooth eigenfunctions are actually approximated better by uniform meshes. Unnecessary grading of the mesh has a negative effect on the approximation constant, although the optimal convergence rate is not affected. We will demonstrate this behavior later in the numerical examples. Of course, the choice of the grading parameter also depends on the degree of the basis functions and the objective of the computation. If the goal is to approximate a specific eigenfunction $u = u_{\nu_k,m}$ for a given $k \in \N_0$ and $m \in \N$, it is better to choose 
\begin{align}
\mu = \begin{cases}
1 \quad & \text{ if } \nu_k \in \N_0 \text{ or } \nu_k \geq p  \, , \\
0.9 \cdot \displaystyle \frac{\nu_k}{p} \quad & \text{ else}\, ,
\end{cases}
\label{eq: choice of grading parameter for known eigenfunction}
\end{align}
where we recall the regularity results of Lemma \ref{lemma: regularity of the eigenfunctions} and Lemma \ref{lemma: smooth eigenfunctions}. If multiple eigenfunctions have to be approximated at once with a guaranteed overall accuracy, the strongest grading parameter \eqref{eq: grading parameter concrete} should be chosen. In contrast, if poor approximations of the few eigenfunctions of very low regularity are not considered problematic, it may be advisable to use uniform meshes. To gain a better understanding, all the described effects of mesh grading will be illustrated numerically in the next section.

\section{Numerical results}
\label{section: Numerical results}
Finally, we perform some numerical tests showing the efficiency of our proposed approach to approximate the Laplace eigenvalue problem on circular sectors \eqref{eq: model problem}. First, we illustrate optimal convergence orders for the eigenpairs with respect to graded $h$-refinement for appropriately chosen grading parameters. In a second experiment, we demonstrate the advantages of our method for the computation of multiple eigenvalues, where we further examine spectral approximation properties of maximally smooth spline spaces on circular sectors. Finally, we combine our method with a hierarchical approach to avoid anisotropic elements in the physical B\'ezier mesh and save unnecessary degrees of freedom. In accordance with the main parts of the paper, we provide numerical results for the most complex circular sector with an inner angle of $2\pi$, which serves as a prototype for smaller angles.

All numerical experiments are carried out using the Matlab computing package GeoPDEs 3.2.2 \cite{Vazquez2016,deFalcoRealiVazquez2011} where our algorithm can be implemented easily by adapting one line of the standard knot refining routine \textit{kntrefine} that is provided in the package. 

\subsection{Approximation error quantities}
Before showing our computational results, we explain the quantities that are used to measure the error of the eigenfunctions and eigenvalues. Let $u=u_{\nu_k,m}$ be an eigenfunction for some $k \in \N_0$ and $m\in\N$ with corresponding eigenvalue $\lambda = \lambda_{\nu_k,m}$ and let $u_h$ and $\lambda_h$ be their approximations using the approach presented in this paper. The absolute and relative eigenvalue errors are given by $|\lambda- \lambda_h|$ and $|\lambda- \lambda_h|/ \lambda$, respectively. However, the eigenfunction error needs to be discussed in more detail. As outlined in Section \ref{subsec: variational crime}, the approximated eigenfunctions $u_h\in V_h$ are not in $H^1(\Omega)$ and thus the $H^1(\Omega)$-error $\norm{u-u_h}_{H^1(\Omega)}$ is not well defined. Instead, we work with quadrature based error quantities, which are commonly used in standard IGA or FEM software. Let $\bs x_{l,{\bs k}} \in K_{\bs k}$ and $w_{l,{\bs k}} \in \R$, $l=1, 2, \dots, n_{\bs k}\, ,\bs k \in \bs J$, be the quadrature points and weights for each element $K_{\bs k} \in \mathcal{M}$, as introduced in Section \ref{subsec: variational crime}. Then, for any $v\in V_h \cup H^1(\Omega)$, we define
\begin{align}
\label{eq: l2-norm quadrature}
\norm{v}_{L^2_h(\Omega)} := \left(\sum_{\bs k \in \bs J}\sum_{l=1}^{n_{\bs k}} v (\bs x_{l,\bs k})^2 \, w_{l,\bs k} \right)^{1/2}
\end{align}
and 
\begin{align}
\label{eq: h1-norm quadrature}
\norm{v}_{H^1_h(\Omega)} := \left( \abs{v}_{L^2_h(\Omega)} ^2 + \left(\sum_{\bs k \in \bs J}\sum_{l=1}^{n_{\bs k}} \nabla v (\bs x_{l,\bs k})^2 \, w_{l,\bs k} \right) \right) ^{1/2} \, .
\end{align}
GeoPDEs uses Gauss-Legendre quadrature for both parametric directions and combines them with a tensor product approach. Therefore, the quadrature weights $w_{l,{\bs k}} \in \R$, $l=1, 2, \dots, n_{\bs k}$, are non-negative \cite[Chapter 25.4]{AbramowitzStegun1988} and the expressions \eqref{eq: l2-norm quadrature} and \eqref{eq: h1-norm quadrature} define seminorms in $L^2(\Omega)$ and $H^1(\Omega)$, respectively. In the following numerical tests, the error quantities
\begin{align*}
\norm{u-u_h}_{L^2_h(\Omega)} \quad \text{ and } \quad \norm{u-u_h}_{H^1_h(\Omega)}
\end{align*}
will be called $L^2_h(\Omega)$- and $H^1_h(\Omega)$-error of the approximated eigenfunctions $u_h$, respectively. All the computations of this section are carried out using $n_{\bs k} = 36$ quadrature points for all $\bs k \in \bs J$.

\subsection{Optimal convergence to the analytical eigenfunctions and eigenvalues}
\label{subsec: optimal convergence}
In the first examples, we show that our method produces optimal convergence rates for the approximation of the Laplace eigenpairs of circulars sectors. As discussed in Section \ref{subsection: regularity of eigenfunctions for omega = 2pi}, the sequence of eigenmodes consists of both singular and smooth functions which are referred to as eigenfunctions of Type (A) and Type (B), respectively. We exemplarily pick one eigenfunction out of each group and evaluate the resulting approximation errors.

\begin{example}[Eigenfunction of Type (A)]
	\label{ex: approximation of singular eigenfunction}
	We approximate the eigenvalue $\lambda_{\nu_1,1}$ of the unit disk with crack and its corresponding eigenfunction $u_{\nu_1,1}$, which has the strongest possible singularity of type $r^{\nu_1}$, where $\nu_1=1/2$. In Figure \ref{fig: approximated second eigenfunction p=(1,2) with variational crime}, we illustrate the discrete eigenfunction throughout the gradual refinement process using NURBS of the lowest possible degree $\bs p=(1,2)$ and regularity $\bs k = (0,1)$. The solution is showcased for various graded mesh refinements, ranging from relatively coarse to further refined ones. The variational crime outlined in Section \ref{subsec: variational crime} can be observed in the coarse solutions as multiple function values are plotted at the conical point $\bs S=(0,0)$. However, these values all converge to zero with further refinement such that the finer solutions visually look correct. Figure \ref{fig: approximation errors for second eigenfunction variational crime} presents the $H^1_h(\Omega)$- and $L^2_h(\Omega)$-error of the eigenfunction and the absolute eigenvalue error using NURBS of polynomial degree $\bs p =(p,p)$, $p\in\{2,3,4\}$, and regularity $ \bs k=(p-1,p-1)$ with varying grading parameters $\mu$. Precisely, we set $\mu=1$ for uniform and $\mu = 0.9 \cdot \nu_1 / p$ for graded refinement, as proposed in formula \eqref{eq: grading parameter concrete}. On uniform meshes, the convergence rate with respect to the $H^1_h(\Omega)$- and $L^2_h(\Omega)$-error of the eigenfunction is $1/2$ and $1$, respectively, for all $\bs p$. In contrast, graded meshes yield optimal convergence orders $p$ and $p+1$, respectively. Consequently, the discrete eigenvalue converges to the exact one with order $1$ for any $\bs p$ during uniform refinement, whereas mesh grading recovers the optimal convergence rate of $2p$. 
	
	\begin{figure}
		\includegraphics[width=0.245\linewidth, trim=3cm 2.5cm 3cm 3cm, clip]{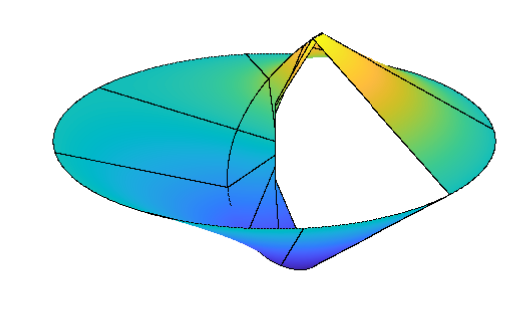}
		\includegraphics[width=0.245\linewidth, trim=3cm 2.5cm 3cm 3cm, clip]{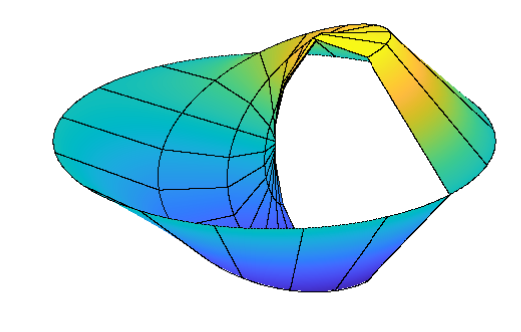}
		\includegraphics[width=0.245\linewidth, trim=3cm 2.5cm 3cm 3cm, clip]{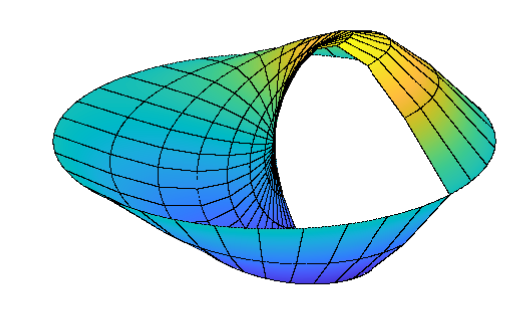}
		\includegraphics[width=0.245\linewidth, trim=3cm 2.5cm 3cm 3cm, clip]{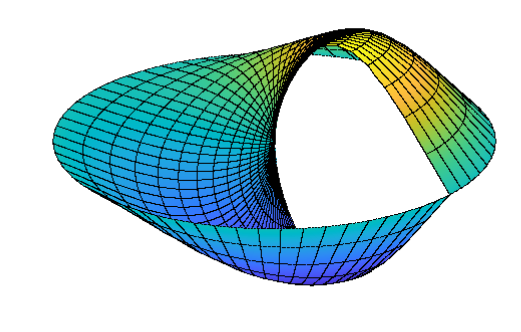}
		
		\caption{Approximation of the eigenfunction $u_{\nu_1,1}$ using NURBS of lowest possible degree $\bs p =(p_1,p_2) = (1,2)$ and regularity $\bs k = (0,1)$ while gradually refining $J_1=\frac{J_2}{4}=2,4,8,16$ times with grading parameter $\mu = 0.9 \cdot \frac{\nu_1}{p_1} = 0.45 $, recall \eqref{eq: grading parameter simplified}.}
		\label{fig: approximated second eigenfunction p=(1,2) with variational crime}
	\end{figure}
	
	\begin{figure}
		\resizebox{0.325\linewidth}{!}{% This file was created by matlab2tikz.
%
%The latest updates can be retrieved from
%  http://www.mathworks.com/matlabcentral/fileexchange/22022-matlab2tikz-matlab2tikz
%where you can also make suggestions and rate matlab2tikz.
%
\definecolor{mycolor1}{rgb}{0.00000,0.44700,0.74100}%
\definecolor{mycolor2}{rgb}{0.85000,0.32500,0.09800}%
\definecolor{mycolor3}{rgb}{0.92900,0.69400,0.12500}%
\definecolor{mycolor4}{rgb}{0.49400,0.18400,0.55600}%
\definecolor{mycolor5}{rgb}{0.46600,0.67400,0.18800}%
\definecolor{mycolor6}{rgb}{0.30100,0.74500,0.93300}%
\begin{tikzpicture}
\tikzstyle{every node}=[font=\Large]
\begin{axis}[%
width=4.521in,
height=3.566in,
at={(0.758in,0.481in)},
scale only axis,
xmode=log,
xmin=52,
xmax=69300,
xminorticks=true,
xlabel={ndof},
ymode=log,
ymin=1.54537479724358e-06,
ymax=0.759408641673929,
yminorticks=true,
ylabel={$||u-u_h||_{H^1_h(\Omega)}$},
axis background/.style={fill=white},
legend style={at={(0.03,0.03)}, anchor=south west, legend cell align=left, align=left, draw=white!15!black}
]
\addplot [color=mycolor1, line width=2.0pt, mark size=3.5pt, mark=square, mark options={solid, mycolor1}]
  table[row sep=crcr]{%
52	0.759408641673929\\
126	0.418408441906673\\
370	0.161190464295871\\
1242	0.0411407118390105\\
4522	0.0101208190548811\\
17226	0.00252345592204149\\
67210	0.000631554089597145\\
};
\addlegendentry{p=2, k=1, $\mu$=0.225}

\addplot [color=mycolor2, dotted, line width=2.0pt, mark size=3.3pt, mark=triangle, mark options={solid, mycolor2}]
  table[row sep=crcr]{%
52	0.324393354869122\\
126	0.208614434634026\\
370	0.148682416094953\\
1242	0.105657729932014\\
4522	0.0748311828716879\\
17226	0.0529377652603949\\
67210	0.0374372131662614\\
};
\addlegendentry{p=2, k=1, $\mu$=1}

\addplot [color=mycolor3, line width=2.0pt, mark size=3.5pt, mark=square, mark options={solid, mycolor3}]
  table[row sep=crcr]{%
85	0.184516750824273\\
175	0.078965430733476\\
451	0.0258736275994486\\
1387	0.00389999496165686\\
4795	0.000667575351627581\\
17755	8.313021856124e-05\\
68251	1.08574530062193e-05\\
};
\addlegendentry{p=3, k=2, $\mu$=0.15}

\addplot [color=mycolor4, dotted, line width=2.0pt, mark size=3.3pt, mark=triangle, mark options={solid, mycolor4}]
  table[row sep=crcr]{%
85	0.211154576294829\\
175	0.159492870773542\\
451	0.113145007365059\\
1387	0.0801080075474656\\
4795	0.0566649351371352\\
17755	0.0400720872849423\\
68251	0.0283360699345079\\
};
\addlegendentry{p=3, k=2, $\mu$=1}

\addplot [color=mycolor5, line width=2.0pt, mark size=3.5pt, mark=square, mark options={solid, mycolor5}]
  table[row sep=crcr]{%
126	0.229281678861689\\
232	0.0754342751360269\\
540	0.0200095072742913\\
1540	0.00335216807103452\\
5076	0.000252685487834078\\
18292	2.12849803913814e-05\\
69300	1.54537479724358e-06\\
};
\addlegendentry{p=4, k=3, $\mu$=0.1125}

\addplot [color=mycolor6, dotted, line width=2.0pt, mark size=3.3pt, mark=triangle, mark options={solid, mycolor6}]
  table[row sep=crcr]{%
126	0.185311637249168\\
232	0.134040818565191\\
540	0.0951088217395636\\
1540	0.0672848830138218\\
5076	0.0475819248954451\\
18292	0.0336459838833139\\
69300	0.0237912743643556\\
};
\addlegendentry{p=4, k=3, $\mu$=1}

\logLogSlopeTriangle{0.825}{0.15}{0.71}{0.25}{gray, line width = 2pt}{0.5};
\logLogSlopeTriangle{0.85}{0.1}{0.46}{01}{gray, line width = 2pt}{2};
\logLogSlopeTriangleUpsideDown{0.95}{0.1}{0.3}{1.5}{gray, line width = 2pt}{3};
\logLogSlopeTriangle{0.825}{0.1}{0.05}{2}{gray, line width = 2pt}{4};

\end{axis}
\end{tikzpicture}%}
		\hspace*{1mm}
		\resizebox{0.325\linewidth}{!}{% This file was created by matlab2tikz.
%
%The latest updates can be retrieved from
%  http://www.mathworks.com/matlabcentral/fileexchange/22022-matlab2tikz-matlab2tikz
%where you can also make suggestions and rate matlab2tikz.
%
\definecolor{mycolor1}{rgb}{0.00000,0.44700,0.74100}%
\definecolor{mycolor2}{rgb}{0.85000,0.32500,0.09800}%
\definecolor{mycolor3}{rgb}{0.92900,0.69400,0.12500}%
\definecolor{mycolor4}{rgb}{0.49400,0.18400,0.55600}%
\definecolor{mycolor5}{rgb}{0.46600,0.67400,0.18800}%
\definecolor{mycolor6}{rgb}{0.30100,0.74500,0.93300}%
\begin{tikzpicture}
\tikzstyle{every node}=[font=\Large]
\begin{axis}[%
width=4.521in,
height=3.566in,
at={(0.758in,0.481in)},
scale only axis,
xmode=log,
xmin=52,
xmax=69300,
xminorticks=true,
xlabel={ndof},
ymode=log,
ymin=1.44013005681052e-08,
ymax=0.112968781416064,
yminorticks=true,
ylabel={$||u-u_h||_{L^2_h(\Omega)}$},
axis background/.style={fill=white},
legend style={at={(0.03,0.03)}, anchor=south west, legend cell align=left, align=left, draw=white!15!black}
]
\addplot [color=mycolor1, line width=2.0pt, mark size=3.5pt, mark=square, mark options={solid, mycolor1}]
  table[row sep=crcr]{%
52	0.112968781416064\\
126	0.0451132264577812\\
370	0.014134718623586\\
1242	0.00168028486335575\\
4522	0.000191929328794319\\
17226	2.32039461175333e-05\\
67210	2.87768285201812e-06\\
};
\addlegendentry{p=2, k=1, $\mu$=0.225}

\addplot [color=mycolor2, dotted, line width=2.0pt, mark size=3.3pt, mark=triangle, mark options={solid, mycolor2}]
  table[row sep=crcr]{%
52	0.023394908508514\\
126	0.00470290793147184\\
370	0.00164328820859685\\
1242	0.000575606688527693\\
4522	0.000203876336182562\\
17226	7.22181828736764e-05\\
67210	2.56140086824138e-05\\
};
\addlegendentry{p=2, k=1, $\mu$=1}

\addplot [color=mycolor3, line width=2.0pt, mark size=3.5pt, mark=square, mark options={solid, mycolor3}]
  table[row sep=crcr]{%
85	0.0113398409947163\\
175	0.00389316098322747\\
451	0.00249558136509603\\
1387	0.000225403322530688\\
4795	1.61808466738888e-05\\
17755	8.23353433266986e-07\\
68251	5.5341288062453e-08\\
};
\addlegendentry{p=3, k=2, $\mu$=0.15}

\addplot [color=mycolor4, dotted, line width=2.0pt, mark size=3.3pt, mark=triangle, mark options={solid, mycolor4}]
  table[row sep=crcr]{%
85	0.009422486132242\\
175	0.00353227283917371\\
451	0.00116755949243138\\
1387	0.000502589337139104\\
4795	0.000227070570877388\\
17755	0.000107053243432372\\
68251	5.1831463393525e-05\\
};
\addlegendentry{p=3, k=2, $\mu$=1}

\addplot [color=mycolor5, line width=2.0pt, mark size=3.5pt, mark=square, mark options={solid, mycolor5}]
  table[row sep=crcr]{%
126	0.0132974477328271\\
232	0.00496870886157153\\
540	0.000848442967250195\\
1540	0.000187426872371374\\
5076	6.35051170223276e-06\\
18292	3.78230714594596e-07\\
69300	1.44013005681052e-08\\
};
\addlegendentry{p=4, k=3, $\mu$=0.1125}

\addplot [color=mycolor6, dotted, line width=2.0pt, mark size=3.3pt, mark=triangle, mark options={solid, mycolor6}]
  table[row sep=crcr]{%
126	0.00573813110459382\\
232	0.00250651384405356\\
540	0.0011513684682419\\
1540	0.000554650180114532\\
5076	0.000267484611753565\\
18292	0.00013171970727905\\
69300	6.53546576454968e-05\\
};
\addlegendentry{p=4, k=3, $\mu$=1}

\logLogSlopeTriangleUpsideDown{0.95}{0.15}{0.61}{0.5}{gray, line width = 2pt}{1};
\logLogSlopeTriangle{0.825}{0.1}{0.365}{1.5}{gray, line width = 2pt}{3};
\logLogSlopeTriangleUpsideDown{0.95}{0.1}{0.24}{2}{gray, line width = 2pt}{4};
\logLogSlopeTriangle{0.825}{0.1}{0.06}{2.5}{gray, line width = 2pt}{5};

\end{axis}
\end{tikzpicture}%}
		\hspace*{1mm}
		\resizebox{0.325\linewidth}{!}{% This file was created by matlab2tikz.
%
%The latest updates can be retrieved from
%  http://www.mathworks.com/matlabcentral/fileexchange/22022-matlab2tikz-matlab2tikz
%where you can also make suggestions and rate matlab2tikz.
%
\definecolor{mycolor1}{rgb}{0.00000,0.44700,0.74100}%
\definecolor{mycolor2}{rgb}{0.85000,0.32500,0.09800}%
\definecolor{mycolor3}{rgb}{0.92900,0.69400,0.12500}%
\definecolor{mycolor4}{rgb}{0.49400,0.18400,0.55600}%
\definecolor{mycolor5}{rgb}{0.46600,0.67400,0.18800}%
\definecolor{mycolor6}{rgb}{0.30100,0.74500,0.93300}%
\begin{tikzpicture}
\tikzstyle{every node}=[font=\Large]
\begin{axis}[%
width=4.521in,
height=3.566in,
at={(0.758in,0.481in)},
scale only axis,
xmode=log,
xmin=52,
xmax=69300,
xminorticks=true,
xlabel={ndof},
ymode=log,
ymin=3.5473846082823e-12,
ymax=0.521252513420217,
yminorticks=true,
ylabel={$|\lambda-\lambda_h|$},
axis background/.style={fill=white},
legend style={at={(0.03,0.03)}, anchor=south west, legend cell align=left, align=left, draw=white!15!black}
]
\addplot [color=mycolor1, line width=2.0pt, mark size=3.5pt, mark=square, mark options={solid, mycolor1}]
  table[row sep=crcr]{%
52	0.521252513420217\\
126	0.211702837884971\\
370	0.0350934711795041\\
1242	0.00240953844320657\\
4522	0.000147809199116722\\
17226	9.21679747101223e-06\\
67210	5.77764598830299e-07\\
};
\addlegendentry{p=2, k=1, $\mu$=0.225}

\addplot [color=mycolor2, dotted, line width=2.0pt, mark size=3.3pt, mark=triangle, mark options={solid, mycolor2}]
  table[row sep=crcr]{%
52	0.0243882659227701\\
126	0.00209221582068153\\
370	0.000902009621066568\\
1242	0.000340433680447561\\
4522	0.00014996219013419\\
17226	7.20064830819211e-05\\
67210	3.56021605636414e-05\\
};
\addlegendentry{p=2, k=1, $\mu$=1}

\addplot [color=mycolor3, line width=2.0pt, mark size=3.5pt, mark=square, mark options={solid, mycolor3}]
  table[row sep=crcr]{%
85	0.0334821995337471\\
175	0.00856336904388222\\
451	0.000873357462451096\\
1387	2.12223049480542e-05\\
4795	6.41444536597646e-07\\
17755	1.00015107307172e-08\\
68251	1.70780722896779e-10\\
};
\addlegendentry{p=3, k=2, $\mu$=0.15}

\addplot [color=mycolor4, dotted, line width=2.0pt, mark size=3.3pt, mark=triangle, mark options={solid, mycolor4}]
  table[row sep=crcr]{%
85	0.0460596642296718\\
175	0.0201821906271356\\
451	0.00999247448419815\\
1387	0.00498274528922416\\
4795	0.00248983123661262\\
17755	0.00124477923402821\\
68251	0.000622387209901376\\
};
\addlegendentry{p=3, k=2, $\mu$=1}

\addplot [color=mycolor5, line width=2.0pt, mark size=3.5pt, mark=square, mark options={solid, mycolor5}]
  table[row sep=crcr]{%
126	0.0441237780188093\\
232	0.00749798125813328\\
540	0.000566736310995353\\
1540	1.57322599765308e-05\\
5076	9.18891061019167e-08\\
18292	6.54280185585776e-10\\
69300	3.5473846082823e-12\\
};
\addlegendentry{p=4, k=3, $\mu$=0.1125}

\addplot [color=mycolor6, dotted, line width=2.0pt, mark size=3.3pt, mark=triangle, mark options={solid, mycolor6}]
  table[row sep=crcr]{%
126	0.0525242399140193\\
232	0.0258477264506212\\
540	0.0128919758569044\\
1540	0.00644543758420646\\
5076	0.00322305266021772\\
18292	0.00161166684911151\\
69300	0.000805875668858036\\
};
\addlegendentry{p=4, k=3, $\mu$=1}

\logLogSlopeTriangle{0.825}{0.15}{0.71}{0.5}{gray, line width = 2pt}{1};
\logLogSlopeTriangle{0.85}{0.1}{0.472}{2}{gray, line width = 2pt}{4};
\logLogSlopeTriangleUpsideDown{0.95}{0.1}{0.3}{3}{gray, line width = 2pt}{6};
\logLogSlopeTriangle{0.825}{0.1}{0.055}{4}{gray, line width = 2pt}{8};

\end{axis}
\end{tikzpicture}%}
		
		\caption{Approximation errors for the eigenfunction $u=u_{\nu_1,1}$ and its corresponding eigenvalue $\lambda=\lambda_{\nu_1,1}$ using gradually and uniformly refined NURBS of degree $\bs p =(p,p)$, $p\in \{2,3,4\}$, and regularity $\bs k = (p-1,p-1)$. We set $\mu = 0.9 \frac{\nu_1}{p} =  \frac{0.9}{2p}$ for graded meshes and $\mu = 1$ corresponds to uniform refinement.}
		\label{fig: approximation errors for second eigenfunction variational crime}
	\end{figure}
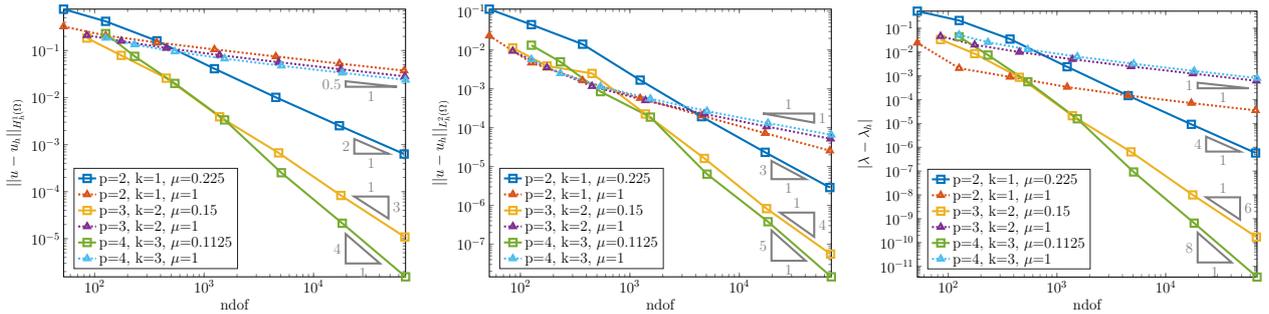
\end{example}

\begin{example}[Eigenfunction of Type (B)]
	\label{ex: approximation of smooth eigenfunction}
	Next, we approximate the eigenvalue $\lambda_{\nu_2,1}$ of the unit disk with crack and its corresponding eigenfunction $u=u_{\nu_2,1}$, where $\nu_2=1$. As shown in Lemma \ref{lemma: smooth eigenfunctions}, this is a smooth function. Figure \ref{fig: approximation errors for third eigenfunction variational crime} depicts the $H^1_h(\Omega)$- and $L^2_h(\Omega)$-error of the eigenfunction and the absolute eigenvalue error using NURBS of polynomial degree $\bs p =(p,p)$, $p\in\{2,3,4\}$, and regularity $\bs k= (p-1,p-1)$ with different grading parameters $\mu$. Here, the convergence orders with uniform and graded refinement are equal, but the approximation constant of the uniform meshes is superior. This result justifies the recommended grading parameter \eqref{eq: choice of grading parameter for known eigenfunction} if a specific eigenfunction is approximated.
	
	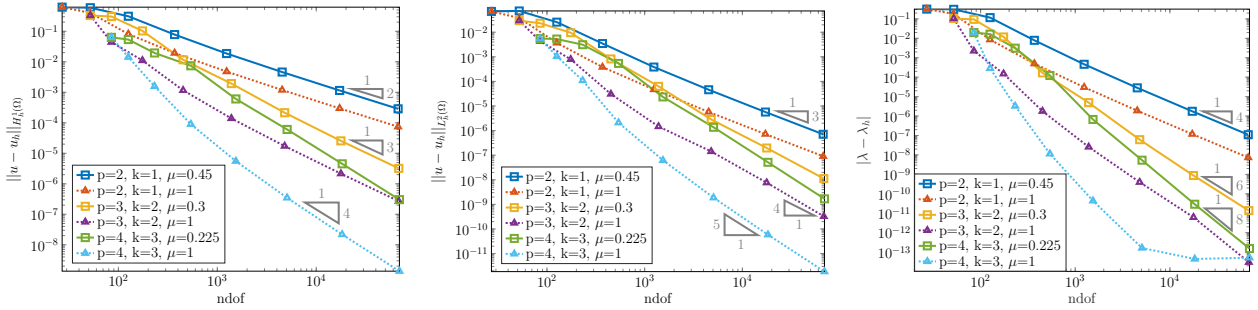
\begin{figure}[t]
		\resizebox{0.32\linewidth}{!}{% This file was created by matlab2tikz.
%
%The latest updates can be retrieved from
%  http://www.mathworks.com/matlabcentral/fileexchange/22022-matlab2tikz-matlab2tikz
%where you can also make suggestions and rate matlab2tikz.
%
\definecolor{mycolor1}{rgb}{0.00000,0.44700,0.74100}%
\definecolor{mycolor2}{rgb}{0.85000,0.32500,0.09800}%
\definecolor{mycolor3}{rgb}{0.92900,0.69400,0.12500}%
\definecolor{mycolor4}{rgb}{0.49400,0.18400,0.55600}%
\definecolor{mycolor5}{rgb}{0.46600,0.67400,0.18800}%
\definecolor{mycolor6}{rgb}{0.30100,0.74500,0.93300}%
\begin{tikzpicture}
\tikzstyle{every node}=[font=\Large]
\begin{axis}[%
width=4.568in,
height=3.603in,
at={(0.766in,0.486in)},
scale only axis,
xmode=log,
xmin=27,
xmax=69300,
xminorticks=true,
xlabel={ndof},
ymode=log,
ymin=1.36869504869903e-09,
ymax=0.619579240357889,
yminorticks=true,
ylabel={$||u-u_h||_{H^1_h(\Omega)}$},
axis background/.style={fill=white},
legend style={at={(0.03,0.03)}, anchor=south west, legend cell align=left, align=left, draw=white!15!black}
]
\addplot [color=mycolor1, line width=2.0pt, mark size=3.5pt, mark=square, mark options={solid, mycolor1}]
  table[row sep=crcr]{%
27	0.619579240357889\\
52	0.593406361632431\\
126	0.3118918250066\\
370	0.0782246002993857\\
1242	0.0186983766561087\\
4522	0.00461779625711514\\
17226	0.00115091932858985\\
67210	0.000287509556384843\\
};
\addlegendentry{p=2, k=1, $\mu$=0.45}

\addplot [color=mycolor2, dotted, line width=2.0pt, mark size=3.3pt, mark=triangle, mark options={solid, mycolor2}]
  table[row sep=crcr]{%
27	0.619579240357889\\
52	0.40208674426521\\
126	0.081684743053873\\
370	0.0193931151896345\\
1242	0.00478744481409925\\
4522	0.00119292415186803\\
17226	0.000297978429753435\\
67210	7.44783456613554e-05\\
};
\addlegendentry{p=2, k=1, $\mu$=1}

\addplot [color=mycolor3, line width=2.0pt, mark size=3.5pt, mark=square, mark options={solid, mycolor3}]
  table[row sep=crcr]{%
52	0.329928839875558\\
85	0.3059823755074\\
175	0.103059544152511\\
451	0.0116687574287159\\
1387	0.00193719072680267\\
4795	0.000214735681820283\\
17755	2.56445109784671e-05\\
68251	3.21034920436612e-06\\
};
\addlegendentry{p=3, k=2, $\mu$=0.3}

\addplot [color=mycolor4, dotted, line width=2.0pt, mark size=3.3pt, mark=triangle, mark options={solid, mycolor4}]
  table[row sep=crcr]{%
52	0.329928839875558\\
85	0.0441810804322156\\
175	0.0110742611036587\\
451	0.00116459103715946\\
1387	0.000139392361532326\\
4795	1.72734997115378e-05\\
17755	2.1558773051958e-06\\
68251	2.69526365024239e-07\\
};
\addlegendentry{p=3, k=2, $\mu$=1}

\addplot [color=mycolor5, line width=2.0pt, mark size=3.5pt, mark=square, mark options={solid, mycolor5}]
  table[row sep=crcr]{%
85	0.0633823339081802\\
126	0.0537437639049071\\
232	0.0195278652947162\\
540	0.00742491663452471\\
1540	0.000606562999893627\\
5076	6.05563669971165e-05\\
18292	4.52977325308729e-06\\
69300	3.02423322081481e-07\\
};
\addlegendentry{p=4, k=3, $\mu$=0.225}

\addplot [color=mycolor6, dotted, line width=2.0pt, mark size=3.3pt, mark=triangle, mark options={solid, mycolor6}]
  table[row sep=crcr]{%
85	0.0633823339081802\\
126	0.0142728705614051\\
232	0.00157750394781962\\
540	8.99425529648439e-05\\
1540	5.53625226893008e-06\\
5076	3.47308617810386e-07\\
18292	2.18216891122536e-08\\
69300	1.36869504869903e-09\\
};
\addlegendentry{p=4, k=3, $\mu$=1}

\logLogSlopeTriangleUpsideDown{0.95}{0.09}{0.69}{1}{gray, line width = 2pt}{2};
\logLogSlopeTriangleUpsideDown{0.95}{0.09}{0.495}{1.5}{gray, line width = 2pt}{3};
\logLogSlopeTriangleUpsideDown{0.82}{0.1}{0.26}{2}{gray, line width = 2pt}{4};

\end{axis}
\end{tikzpicture}%}
		\hspace*{1mm}
		\resizebox{0.32\linewidth}{!}{% This file was created by matlab2tikz.
%
%The latest updates can be retrieved from
%  http://www.mathworks.com/matlabcentral/fileexchange/22022-matlab2tikz-matlab2tikz
%where you can also make suggestions and rate matlab2tikz.
%
\definecolor{mycolor1}{rgb}{0.00000,0.44700,0.74100}%
\definecolor{mycolor2}{rgb}{0.85000,0.32500,0.09800}%
\definecolor{mycolor3}{rgb}{0.92900,0.69400,0.12500}%
\definecolor{mycolor4}{rgb}{0.49400,0.18400,0.55600}%
\definecolor{mycolor5}{rgb}{0.46600,0.67400,0.18800}%
\definecolor{mycolor6}{rgb}{0.30100,0.74500,0.93300}%
\begin{tikzpicture}
\tikzstyle{every node}=[font=\Large]
\begin{axis}[%
width=4.568in,
height=3.603in,
at={(0.766in,0.486in)},
scale only axis,
xmode=log,
xmin=27,
xmax=69300,
xminorticks=true,
xlabel={ndof},
ymode=log,
ymin=1.8497194333741e-12,
ymax=0.0734312066413871,
yminorticks=true,
ylabel={$||u-u_h||_{L^2_h(\Omega)}$},
axis background/.style={fill=white},
legend style={at={(0.03,0.03)}, anchor=south west, legend cell align=left, align=left, draw=white!15!black}
]
\addplot [color=mycolor1, line width=2.0pt, mark size=3.5pt, mark=square, mark options={solid, mycolor1}]
  table[row sep=crcr]{%
27	0.0702831084890611\\
52	0.0734312066413871\\
126	0.0257531145863176\\
370	0.00347568804236712\\
1242	0.000384799304398632\\
4522	4.58868908644786e-05\\
17226	5.6927666824735e-06\\
67210	7.10480351286862e-07\\
};
\addlegendentry{p=2, k=1, $\mu$=0.45}

\addplot [color=mycolor2, dotted, line width=2.0pt, mark size=3.3pt, mark=triangle, mark options={solid, mycolor2}]
  table[row sep=crcr]{%
27	0.0702831084890611\\
52	0.0378354367804528\\
126	0.00369607854073533\\
370	0.000382671873296083\\
1242	4.70635100532554e-05\\
4522	5.76127396117307e-06\\
17226	7.21994517503527e-07\\
67210	9.06237082132988e-08\\
};
\addlegendentry{p=2, k=1, $\mu$=1}

\addplot [color=mycolor3, line width=2.0pt, mark size=3.5pt, mark=square, mark options={solid, mycolor3}]
  table[row sep=crcr]{%
52	0.0296725439480925\\
85	0.0232915069243082\\
175	0.00956480310289609\\
451	0.000823776249175645\\
1387	6.31190842701838e-05\\
4795	2.84570988859389e-06\\
17755	1.96691067263248e-07\\
68251	1.12175036275752e-08\\
};
\addlegendentry{p=3, k=2, $\mu$=0.3}

\addplot [color=mycolor4, dotted, line width=2.0pt, mark size=3.3pt, mark=triangle, mark options={solid, mycolor4}]
  table[row sep=crcr]{%
52	0.0296725439480925\\
85	0.00434164323930073\\
175	0.000791397978819673\\
451	3.07348454232327e-05\\
1387	1.46763047369369e-06\\
4795	1.41960786062165e-07\\
17755	7.6336991882219e-09\\
68251	3.2684542204552e-10\\
};
\addlegendentry{p=3, k=2, $\mu$=1}

\addplot [color=mycolor5, line width=2.0pt, mark size=3.5pt, mark=square, mark options={solid, mycolor5}]
  table[row sep=crcr]{%
85	0.00555264680203518\\
126	0.00528779477460469\\
232	0.00311185819807807\\
540	0.000539547018733666\\
1540	2.33542884488691e-05\\
5076	1.36289672675293e-06\\
18292	5.14210759118573e-08\\
69300	1.68794180593417e-09\\
};
\addlegendentry{p=4, k=3, $\mu$=0.225}

\addplot [color=mycolor6, dotted, line width=2.0pt, mark size=3.3pt, mark=triangle, mark options={solid, mycolor6}]
  table[row sep=crcr]{%
85	0.00555264680203518\\
126	0.00105234918596993\\
232	0.000108533149651327\\
540	2.09919052100588e-06\\
1540	6.21250300758878e-08\\
5076	1.86633239841443e-09\\
18292	5.84655691974127e-11\\
69300	1.8497194333741e-12\\
};
\addlegendentry{p=4, k=3, $\mu$=1}

\logLogSlopeTriangleUpsideDown{0.95}{0.09}{0.615}{1.5}{gray, line width = 2pt}{3};
\logLogSlopeTriangle{0.88}{0.09}{0.215}{2}{gray, line width = 2pt}{4};
\logLogSlopeTriangle{0.7}{0.1}{0.14}{2.5}{gray, line width = 2pt}{5};

\end{axis}
\end{tikzpicture}%}
		\hspace*{1mm}
		\resizebox{0.32\linewidth}{!}{% This file was created by matlab2tikz.
%
%The latest updates can be retrieved from
%  http://www.mathworks.com/matlabcentral/fileexchange/22022-matlab2tikz-matlab2tikz
%where you can also make suggestions and rate matlab2tikz.
%
\definecolor{mycolor1}{rgb}{0.00000,0.44700,0.74100}%
\definecolor{mycolor2}{rgb}{0.85000,0.32500,0.09800}%
\definecolor{mycolor3}{rgb}{0.92900,0.69400,0.12500}%
\definecolor{mycolor4}{rgb}{0.49400,0.18400,0.55600}%
\definecolor{mycolor5}{rgb}{0.46600,0.67400,0.18800}%
\definecolor{mycolor6}{rgb}{0.30100,0.74500,0.93300}%
\begin{tikzpicture}
\tikzstyle{every node}=[font=\Large]
\begin{axis}[%
width=4.568in,
height=3.603in,
at={(0.766in,0.486in)},
scale only axis,
xmode=log,
xmin=20,
xmax=69300,
xminorticks=true,
xlabel={ndof},
ymode=log,
ymin=1.01980662698043e-14,
ymax=0.318027416616536,
yminorticks=true,
ylabel={$|\lambda-\lambda_h|$},
label style={font=\Large\color{white!15!black}},
tick label style={font=\Large},
axis background/.style={fill=white},
legend style={at={(0.0,0.0)}, anchor=south west, legend cell align=left, align=left, draw=white!15!black, font=\Large}
]
\addplot [color=mycolor1, line width=2.0pt, mark size=3.5pt, mark=square, mark options={solid, mycolor1}]
  table[row sep=crcr]{%
27	0.318027416616536\\
52	0.311141355079107\\
126	0.116254464979399\\
370	0.00789199050806566\\
1242	0.000461529458432253\\
4522	2.828868482041e-05\\
17226	1.75930133394786e-06\\
67210	1.09819682236889e-07\\
};
\addlegendentry{p=2, k=1, $\mu$=0.45}

\addplot [color=mycolor2, dotted, line width=2.0pt, mark size=3.3pt, mark=triangle, mark options={solid, mycolor2}]
  table[row sep=crcr]{%
27	0.318027416616536\\
52	0.179649463714172\\
126	0.00855306635173569\\
370	0.000496640251217428\\
1242	3.04070309287141e-05\\
4522	1.89009761442094e-06\\
17226	1.17963276480282e-07\\
67210	7.36990202199195e-09\\
};
\addlegendentry{p=2, k=1, $\mu$=1}

\addplot [color=mycolor3, line width=2.0pt, mark size=3.5pt, mark=square, mark options={solid, mycolor3}]
  table[row sep=crcr]{%
52	0.104125202171245\\
85	0.0942446288016363\\
175	0.0117771877540722\\
451	0.000166617516695666\\
1387	4.90901718563919e-06\\
4795	6.11242008119461e-08\\
17755	8.73033201287399e-10\\
68251	1.36459732402727e-11\\
};
\addlegendentry{p=3, k=2, $\mu$=0.3}

\addplot [color=mycolor4, dotted, line width=2.0pt, mark size=3.3pt, mark=triangle, mark options={solid, mycolor4}]
  table[row sep=crcr]{%
52	0.104125202171245\\
85	0.00225281297417723\\
175	0.000150011158181584\\
451	1.78202697220797e-06\\
1387	2.57698804517759e-08\\
4795	3.95999677493819e-10\\
17755	6.17994544427347e-12\\
68251	3.01980662698043e-14\\
};
\addlegendentry{p=3, k=2, $\mu$=1}

\addplot [color=mycolor5, line width=2.0pt, mark size=3.5pt, mark=square, mark options={solid, mycolor5}]
  table[row sep=crcr]{%
85	0.0197036887358539\\
126	0.0164096667762159\\
232	0.00311682058013218\\
540	0.000122334096065657\\
1540	6.6684811983464e-07\\
5076	5.37607469652812e-09\\
18292	2.91517920913975e-11\\
69300	1.52766688188422e-13\\
};
\addlegendentry{p=4, k=3, $\mu$=0.225}

\addplot [color=mycolor6, dotted, line width=2.0pt, mark size=3.3pt, mark=triangle, mark options={solid, mycolor6}]
  table[row sep=crcr]{%
85	0.0197036887358539\\
126	0.000287161251179668\\
232	3.27961671686694e-06\\
540	1.14351266233825e-08\\
1540	4.3648640257743e-11\\
5076	1.63424829224823e-13\\
18292	4.44089209850063e-14\\
69300	5.15143483426073e-14\\
};
\addlegendentry{p=4, k=3, $\mu$=1}

\logLogSlopeTriangleUpsideDown{0.947}{0.085}{0.61}{2}{gray, line width = 2pt}{4};
\logLogSlopeTriangleUpsideDown{0.947}{0.085}{0.36}{3}{gray, line width = 2pt}{6};
\logLogSlopeTriangleUpsideDown{0.947}{0.08}{0.24}{4}{gray, line width = 2pt}{8};

\end{axis}
\end{tikzpicture}%}
		
		\caption{Approximation errors for the eigenfunction $u=u_{\nu_2,1}$ and its corresponding eigenvalue $\lambda=\lambda_{\nu_2,1}$ using gradually and uniformly refined NURBS of degree $\bs p =(p,p)$, $p\in \{2,3,4\}$, and regularity $\bs k = (p-1,p-1)$. We set $\mu = 0.9 \frac{\nu_2}{p}  =\frac{0.9}{p}$ for graded meshes and $\mu = 1$ corresponds to uniform refinement.}
		\label{fig: approximation errors for third eigenfunction variational crime}
	\end{figure}
\end{example}

\subsection{Spectral approximation properties of smooth splines on circular sectors}
\label{subsec: spectral approximation properties}	
Naturally, the solution of an eigenvalue problem is not just given by single eigenvalues and eigenfunctions but by an infinite sequence of such. Therefore, we use our approach to compute multiple solutions of the model problem \eqref{eq: model problem} in the next numerical experiment. The different regularity properties of the exact Laplace eigenfunctions, outlined in Section \ref{section: regularity of the eigenfunctions}, will be crucial to understand the numerical results.

In this context, it is of interest whether some of the spectral approximation properties of smooth splines, which have been shown for rectangular domains in the literature \cite{CottrellHughesBazilevs2009,SandeManniSpeleers2019,HiemstraHughesRealiSchillinger2021,ManniSandeSpeleers2022} can be extended to circular sectors. In most of the related works, the full discrete spectrum is approximated, that is, one discrete eigenvalue is computed per computational degree of freedom. The power of smooth splines in this setting is twofold. First, finite elements produce so called spectral branches, i.e., the eigenvalue error jumps for higher frequencies, a phenomenon which does not occur with maximally smooth splines. Second, the overall approximation constant of smooth splines is better over the whole discrete spectrum. In the following, we conduct numerical studies to investigate comparable properties of the discrete spectrum of circular sectors.

At this point, smooth splines on circular sectors should be considered cautiously. In $\zeta_2$-direction, the geometry mapping $\bs F$ is based on the classical quadratic NURBS parameterizations of circles. Typically, circles are constructed from multiple arcs smaller than $180^\circ$, and the resulting bases are no more than $C^0$-continuous at the junctions of the arcs \cite[Chapter 2.4.1.1]{CottrellHughesBazilevs2009}. In particular, we use four arcs of $90^\circ$ for the discretization of the unit disk with crack. Therefore, the univariate NURBS basis functions in $\zeta_2$-direction are only $C^{0}$-continuous at the points $\left\{1/4,1/2,3/4\right\}$. We explicitly compute these functions in \ref{sec: appendix explicit representation}.  Consequently, the coarse tensor product basis functions \eqref{eq: basis functions regular case} are $C^0$-continuous along the lines $\{(r \cos \varphi, r \sin\varphi ) : r \in (0,1), \varphi \in \{\pi/2, \pi, 3\pi/2\}\}$. After refinement, these $C^0$-lines will still be present. However, this is considered a minor limitation for the smoothness of the NURBS basis functions since, in all the regions between the specified lines of $C^0$-continuity, the regularity can be increased as desired by using the $k$-refinement algorithm introduced in Section \ref{sec: refinement procedures}.
\begin{example}
	\label{ex: full discrete spectrum}
	We compute the full discrete spectrum of the unit disk with crack using different parameters, all of which result in systems with a total of $976$ degrees of freedom. We compare bicubic $C^2$-continuous NURBS and biquintic $C^4$-continuous NURBS on uniform and graded meshes with their $C^0$-continuous counterparts. As outlined in Section \ref{subsec: choice of grading parameter}, we choose the strong grading parameter $\mu=0.9 \cdot \nu_1 /p$ from \eqref{eq: grading parameter concrete} for the graded meshes since we want to compute all discrete eigenvalues at once and some of them have corresponding eigenfunctions with the strong singularity of type $r^{\nu_1}$. Figure \ref{fig: full discrete spectrum} presents the relative eigenvalue errors, ordered by the discrete eigenvalue size. For both types of mesh, it becomes evident that the approximation accuracy of the displayed spectrum increases with higher NURBS regularity, except for the very upper part of the spectrum. Hence, the positive findings concerning the approximation constant of smooth splines, which have been illustrated in the literature for rectangular domains, can be extended to the case of circular sectors, at least in the lower part of the spectrum. However, we cannot identify the appearance of spectral branches. Our results on uniform meshes optically resemble those in \cite{BartezzaghiDedeQuarteroni2015}, where the Laplace-Beltrami eigenvalue problem has been considered on the unit sphere. In general, higher eigenvalues are approximated better by uniform meshes, which is attributable to their superior approximation constant on coarse refinement levels, as it can also be observed in Example \ref{ex: approximation of singular eigenfunction} and \ref{ex: approximation of smooth eigenfunction}.
	\begin{figure}
		\centering
		\includegraphics[width=0.495\linewidth, trim=1.3cm 0.8cm 1.6cm 1.4cm, clip]{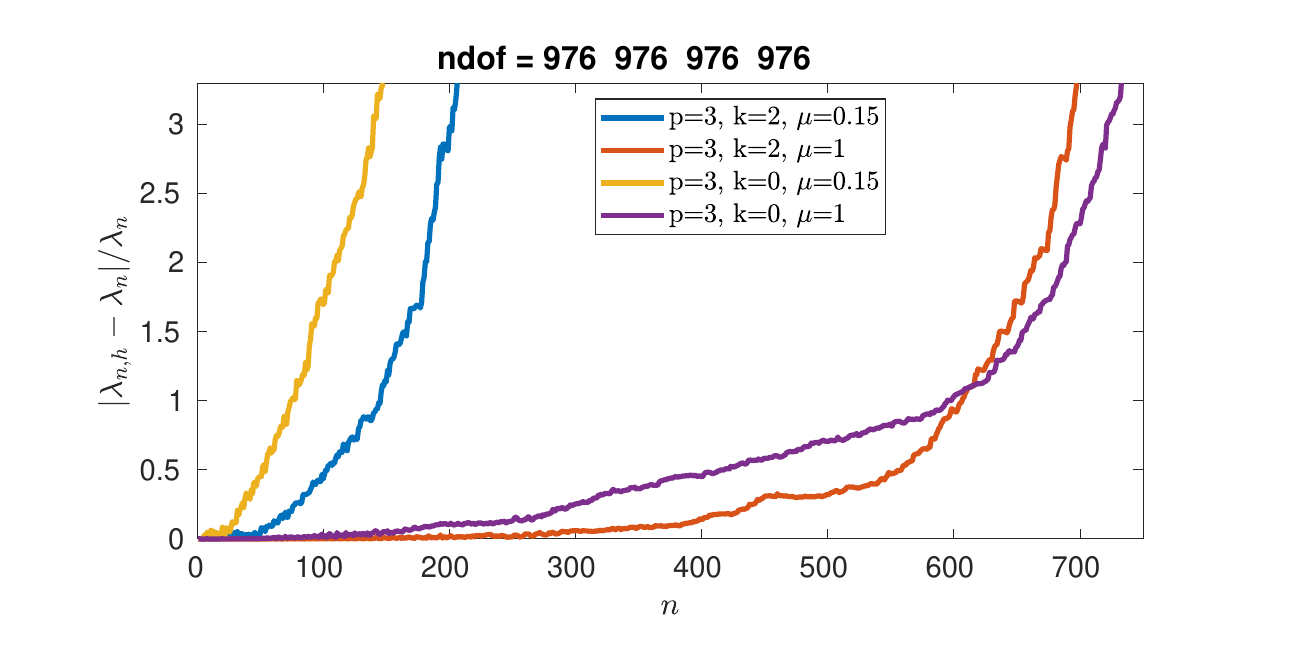}
		\hfill
		\includegraphics[width=0.495\linewidth, trim=1.3cm 0.8cm 1.6cm 1.4cm, clip ]{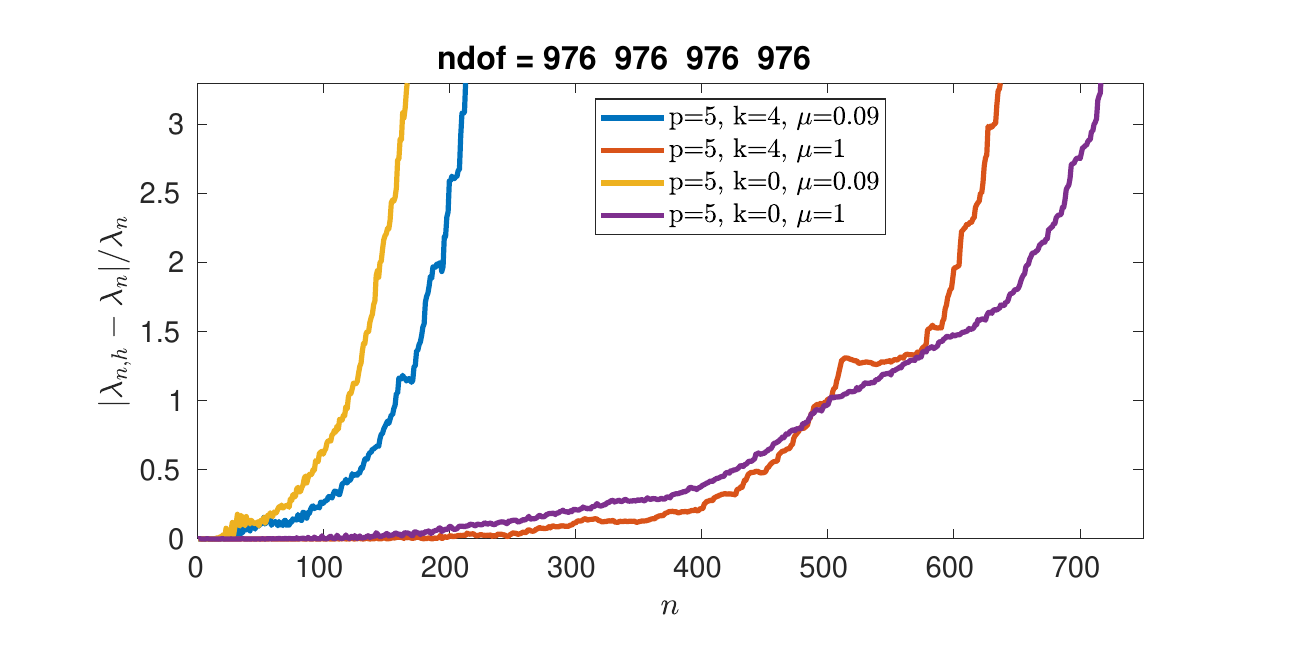}
		\caption{Relative eigenvalue errors, ordered by discrete eigenvalue size, using gradually and uniformly refined NURBS of degree $\bs p =(p,p)$ and regularity $\bs k= (k,k)$. A total of $976$ degrees of freedom is used in all the computations. We set $\mu = 0.9 \frac{\nu_1}{p} =\frac{0.9}{2p}$ for graded meshes and $\mu = 1$ corresponds to uniform refinement.}
		\label{fig: full discrete spectrum}
	\end{figure}
\end{example}

\begin{remark}
	In Example \ref{ex: full discrete spectrum} and Figure \ref{fig: full discrete spectrum}, each discrete eigenvalue is associated to an exact eigenvalue by ordering both sequences in ascending order. However, this matching is not always correct, that is, a discrete eigenvalue $\lambda_{n,h}$ might be an approximation of the exact eigenvalue $\lambda_{m}$ with $n \neq m$. Techniques for a proper association of discrete and exact eigenvalues have been proposed in the literature, but it has also been observed that the impact of mismatching is marginal \cite{HughesRealiSangalli2008,GianiGrubisicHakulaOvall2021,HiemstraHughesRealiSchillinger2021,ManniSandeSpeleers2022}.
\end{remark}

To demonstrate the strength of our proposed graded refinement approach, we need to adopt a slightly different point of view. The primary benefit of gradually refined NURBS is the optimal approximation of eigenfunctions with low regularity and their corresponding eigenvalues. This property is asymptotic which only comes into effect for an increasing number of computational degrees of freedom. Therefore, we decide to compute a fixed number of discrete eigenvalues and increase the number of degrees of freedom. Since our objective is to approximate the eigenvalues of circular sectors in the best possible way and the accuracy of Galerkin methods generally decreases for higher frequencies \cite[Chapter 6.3]{StrangFix2008}, it is natural to consider only the lower discrete eigenvalues and refine the mesh. The analysis of spectral branches up to the highest discrete frequencies is of specific interest for related time-dependent problems in explicit numerical schemes as the critical time-step size calculation is inversely proportional to the maximum discrete eigenfrequency \cite{Hughes2012}, but we do not consider this here. 

%First, it is much easier to check for double eigenvalues as we can simply compute the corresponding Bessel zeros \eqref{eq: formula eigenvalues}. Second, we can assure that the discrete eigenvalues match to their exact counterpart by visually comparing the eigenfunctions. 

\begin{example}
	We compare the relative errors for the $100$ smallest Laplace eigenvalues of the unit disk with crack using suitably graded and uniform meshes. Again, we choose the strong grading parameter $\mu=0.9 \cdot \nu_1 /p$ from \eqref{eq: grading parameter concrete} since we want to approximate multiple eigenvalues at once with a guaranteed optimal convergence rate.
	
	First, we employ biquadratic, $C^1$-continuous NURBS in a system with $68251$ degrees of freedom, which arises for $128$ subdivisions of the coarse mesh in each parametric direction as discussed in Section \ref{section: Isogeometric mesh grading}. The results are presented in the left hand plot of Figure \ref{fig: uniform vs graded mesh first 100 eigenvalues}. The uniform mesh produces some very poor approximations along with many accurate ones. More precisely, the approximates of the eigenvalues $\lambda_2, \lambda_8, \lambda_{20}, \dots$ are significantly less accurate compared to the rest. In Table \ref{tab:ZerosOfBesselfunctions}, we can see that these are the eigenvalues that belong to eigenfunctions of the lowest possible regularity. As outlined in Example \ref{ex: approximation of singular eigenfunction}, uniformly refined approximations do not converge optimally. Hence, these eigenvalues produce outliers in the spectral approximation since the remaining eigenvalues are associated with eigenfunctions of higher regularity which are also approached accurately on uniform meshes. In contrast, the eigenvalue errors achieved by the graded mesh are evenly distributed. All the eigenvalues are approximated with a comparable accuracy, which is naturally decreasing for higher eigenvalues. In particular, the outliers of the uniform version, $\lambda_2, \lambda_8, \lambda_{20}, \dots$, are approximated much better on the graded mesh. However, the graded approximation of the remaining eigenvalues with corresponding eigenfunctions of higher regularity is worse than the uniform variant. For this type of eigenvalues, the grading is unnecessary and has a negative effect on the approximation constant, as discussed in Section \ref{subsec: choice of grading parameter} and Example \ref{ex: approximation of smooth eigenfunction}.
	
	In a second test, we employ biquintic, $C^4$-continuous NURBS in a system with $68251$ degrees of freedom on a uniform and graded mesh. The relative eigenvalue errors are depicted in the middle plot of Figure \ref{fig: uniform vs graded mesh first 100 eigenvalues}. Here, in addition to the poor approximations of $\lambda_2, \lambda_8, \lambda_{20}, \dots$, the uniform mesh produces a second group of outliers, given by the eigenvalues $\lambda_4, \lambda_{13}, \lambda_{26}, \dots$. In Table \ref{tab:ZerosOfBesselfunctions}, we observe that these eigenvalues belong to eigenfunctions with the second strongest singularity. Due to the high degree of the NURBS, the difference between optimal and reduced convergence rates also becomes noticeable for this group of eigenvalues. Contrarily, the graded mesh produces an evenly distributed error point cloud, but less accurate results for eigenvalues that correspond to smoother eigenfunctions.
	
	Lastly, we compare biquintic $C^4$-continuous NURBS with their $C^0$-continuous counterparts on uniform and graded meshes. The results are presented in the right hand plot of Figure \ref{fig: uniform vs graded mesh first 100 eigenvalues}. For both types of mesh, the approximation is more accurate with smooth splines, although the same number of degrees of freedom is used. The superiority of smooth splines is even clearer on graded meshes as they do not produce outliers. Hence, we can again confirm the excellent spectral approximation constant of smooth splines on circular sectors. Since we consider the very lower part of the spectrum here which can not be analyzed well in the study conducted in Figure \ref{fig: full discrete spectrum}, the results in this framework complement the previous findings of Example \ref{ex: full discrete spectrum}.
	
	To summarize, this example shows that the combination of single-patch mesh grading with maximally smooth splines on circular sectors is a very powerful approach to approximate the Laplace eigenvalues of circular sectors. It is computationally cheap, accurate and produces an evenly distributed error throughout the considered eigenvalues.
	
	\begin{figure}[t]
		\includegraphics[width=0.33\linewidth, trim=0.65cm 0.8cm 1.6cm 1.3cm, clip]{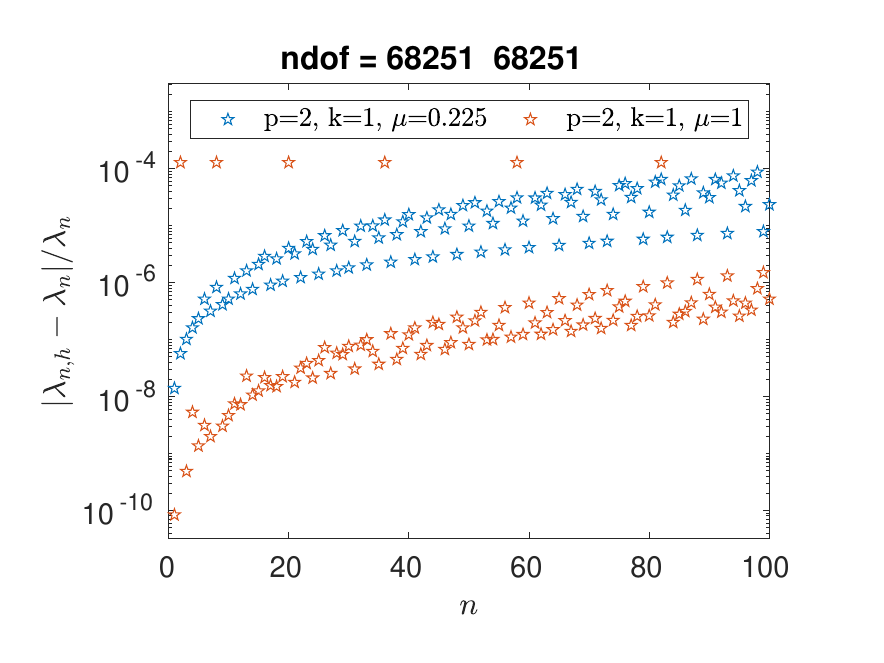}
		\includegraphics[width=0.33\linewidth, trim=0.65cm 0.8cm 1.6cm 1.3cm, clip]{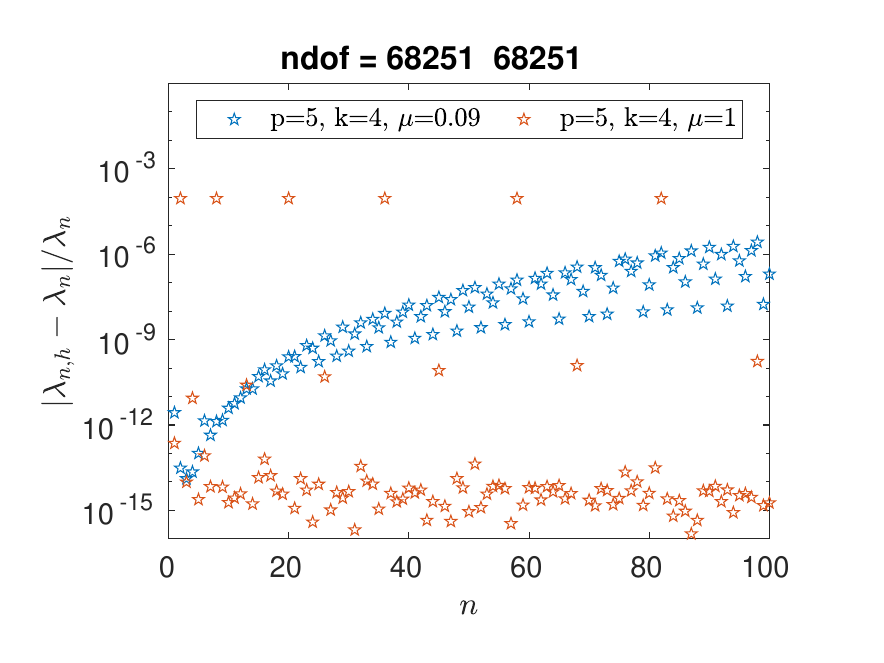}
		\includegraphics[width=0.33\linewidth, trim=0.65cm 0.8cm 1.6cm 1.3cm, clip]{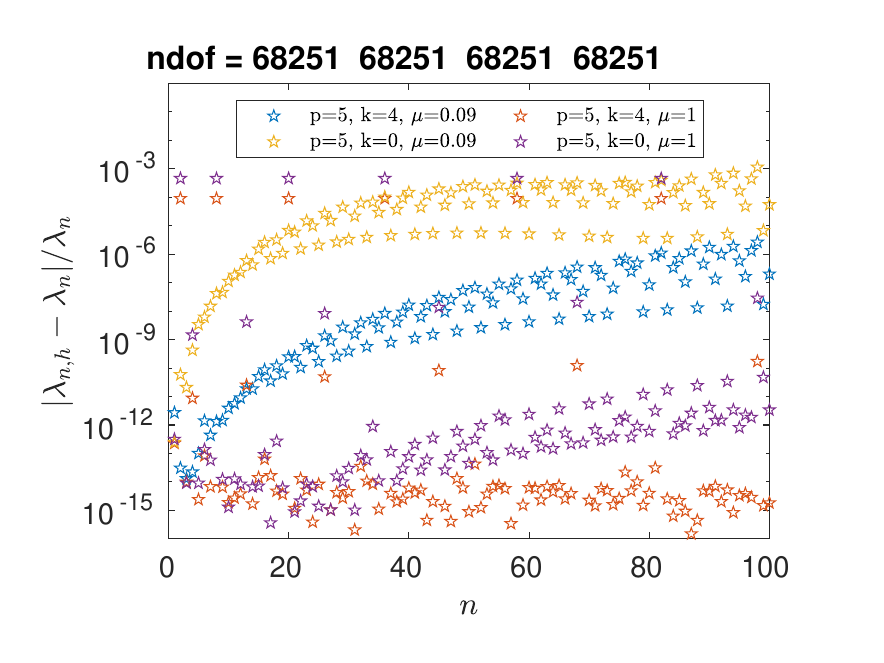}
		\caption{Relative errors for the first $100$ eigenvalues, ordered by discrete eigenvalue size, using gradually and uniformly refined NURBS of degree $\bs p =(p,p)$ and regularity $\bs k= (k,k)$. A total of $68251$ degrees of freedom is used in all the computations. We set $\mu = 0.9 \frac{\nu_1}{p} = \frac{0.9}{2p}$ for graded meshes and $\mu = 1$ corresponds to uniform refinement.}
		\label{fig: uniform vs graded mesh first 100 eigenvalues}
	\end{figure}
\end{example}

\subsection{Hierarchical meshes in combination with graded refinement}
\label{subsec: hierarchical refinement}
Finally, we present a variation of our method that aims at improving the constructed graded meshes. As discussed in Section \ref{subsec: anisotropic meshes}, the physical B\'ezier mesh is anisotropic next to the conical point of the circular sector due to the singular geometry mapping. To prevent these highly stretched elements, we can replace the tensor product meshes used so far by
hierarchical meshes.
%Loosely speaking, for small $j_1$, the vertical rectangles $[\zeta_{1,j_1}, \zeta_{1,j_1+1}] \times [0,1]$ of the parametric mesh can be subdivided in $\zeta_2$-direction in a hierarchical way. Hence, the rings $\{(r, \varphi) : r \in [\zeta_{1,j_1}, \zeta_{1,j_1+1}], \varphi \in [0,\omega] \}$ of the physical mesh are subdivided hierarchically, that is, the circular elements have smaller angles the closer to the conical point they lie. 
In Figure \ref{fig: Graded hierarchical mesh}, we illustrate the hierarchical mesh scheme exemplarily for the unit disk with crack in combination with both uniform and graded refinement. Again, this serves as a prototype for other circular sectors. Similar ideas have been mentioned in the context of singularly parameterized triangles \cite{Takacs2012}, subdivision based isogeometric analysis \cite{Takacs2023preprint}, or finite element methods on spherical domains \cite{ApelPester2005}. In GeoPDEs, the hierarchical refinement scheme can be implemented by using the extension of the package presented in \cite{GarauVazquez2018}. It leads to a purely isotropic mesh in the physical domain; the corresponding aspect ratio \eqref{eq: aspect ratio physical domain} simplifies to
\begin{align*}
\frac{h_1^{1/\mu}}{h_1^{1/\mu}} = 1 \, .
\end{align*}
Besides, the hierarchical meshes offer another advantage. As outlined in Section \ref{subsec: variational crime}, a variational crime is committed in the elements adjacent to the conical point since the corresponding basis functions do not belong to the solution space of the weak problem. By employing a hierarchical approach, the number of such elements is held constant over the refinement steps, which is not the case for standard tensor product meshes. We visualize this effect in Figure \ref{fig: elements close to center}. Hence, the number of problematic basis functions does not increase throughout hierarchical refinement. It can even be shown that the stiffness matrix entries \eqref{eq: approximate stiffness matrix entries} involving singular basis functions stay constant during refinement, but we do not go into detail here.

In the following numerical examples, we test the described hierarchical approach for approximating the Laplace eigenvalues and eigenfunctions of circular sectors.

%\begin{table}
%		\begin{center}
%		\begin{tabular}{ |c|c|c|c|c|c| } 
%			\hline
%			$J_1=J_2$ & $\tilde{a}(N_{\bs 1,\bs p},N_{\bs 1,\bs p})$ &  $\tilde{a}(N_{\bs 0,\bs p},N_{\bs 0,\bs p})$ \\ 
%			\hline
%			2 & 5.6542 & 1.885 \\
%			4 & 11.8237 & 1.885 \\
%			8 & 24.2659 & 1.885 \\
%			16 & 49.1976 & 1.885 \\
%			32 & 99.0838 & 1.885 \\
%			64 & 14.2684 & 1.885 \\
%			128 & 14.2684 & 1.885	\\		
%		\end{tabular}
%	\end{center}
%	\caption{matrix entries of the system \ref{eq: matrix eigenvalue system variational crime} and .}
%	\label{tab: stiffness amtrix entries singular vs modified}
%\end{table}

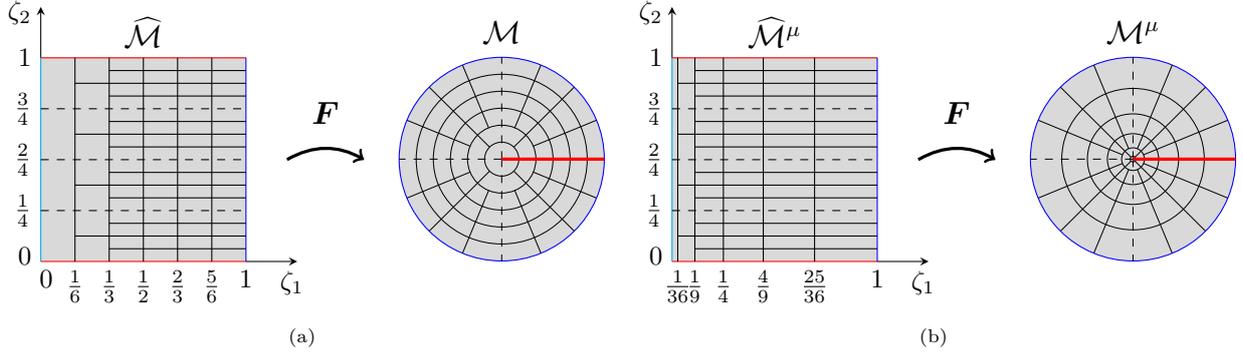
\begin{figure}
	\begin{center}
		\begin{subfigure}{0.495\linewidth}
			\begin{center}
				\begin{tikzpicture}[baseline, scale =0.675]			
				\def \N {5}			% Anzahl der Elemente in xi-Richtung
				\def \hxi{1/6}		% Elementgröße in xi-Richtung
				
				\def \M {3} 		% Anzahl der Elemente in eta-Richtung
				\def \heta{1/4}		% Elementgröße in eta-Richtung
				
				\node at (2,4.5) {\large $\widehat{\mathcal{M}}$};
				
				\fill[gray!30] (0,0) rectangle (4,4);
				
				\draw[-stealth] (4,0) -- (5, 0)  node[below, xshift = -2] {$\zeta_1$};
				\draw[-stealth] (0,4) -- (0, 5) node[left, yshift = -2] {$\zeta_2$};
				
				\draw[red] (0,0) -- (4, 0);
				\draw[red] (0,4) -- (4, 4);
				
				% Graded refinement
				\foreach \i in {1,...,\N}{
					\draw (\i*\hxi*4,0) -- (\i*\hxi*4,4);
				}
				
				\draw[cyan] (0,0) -- (0, 4);
				\draw[blue] (4,0) -- (4, 4);
				
				\foreach \i in {1,...,3}{
					\draw[dashed] (0, \i) -- (4, \i);
				}
				
				% Hierarchical refinement:
				\foreach \j in {0,...,3}{	
					\draw (\hxi*4, 2*\heta + \j) -- (4, 2*\heta + \j); 			
					\foreach \i in {1,3}{
						\draw (\hxi*2*4, \i*\heta + \j) -- (4, \i*\heta + \j);
					}
				}
				
				\node[left, yshift = 2] at (0,0) {$0$};
				\node[left] at (0,1) {$\frac14$};
				\node[left] at (0,2) {$\frac24$};
				\node[left] at (0,3) {$\frac34$};
				\node[left] at (0,4) {$1$};
				
				\node[below, xshift = 2] at (0,0) {$0$};
				\node[below] at (\hxi*1*4,0) {$\frac16$};
				\node[below] at (\hxi*2*4,0) {$\frac13$};
				\node[below] at (\hxi*3*4,0) {$\frac12$};
				\node[below] at (\hxi*4*4,0) {$\frac23$};
				\node[below] at (\hxi*5*4,0) {$\frac56$};
				\node[below] at (4,0) {$1$};
				
				\draw [->, very thick] (4.8,2) to [out=30,in=150] (6.3,2) node[above] at (5.55,2.5) {\large $\bs F$};
				\end{tikzpicture}
				\hspace*{2mm}
				\begin{tikzpicture}[scale = 1.35]		
				
				\def \N {5}			% Anzahl der Elemente in xi-Richtung
				\def \hxi{1/6}		% Elementgröße in xi-Richtung
				
				\node at (0,1.25) {\large $\mathcal{M}$};
				
				\fill[gray!30] (0,0) circle (1);
				
				\foreach \i in {0,...,\N}{
					\draw [line width=0.05mm] (0:0) circle (\i*\hxi);
				}
				\draw [blue] (0:0) circle (1);
				
				\foreach \x in {0.707}{
					\draw [line width=0.05mm] (\hxi*\x,{\hxi*sqrt(1-\x*\x)}) -- (\x,{sqrt(1-\x*\x)});
					\draw [line width=0.05mm] (-\hxi*\x,{\hxi*sqrt(1-\x*\x)}) -- (-\x,{sqrt(1-\x*\x)});
					\draw [line width=0.05mm] (-\hxi*\x,-{\hxi*sqrt(1-\x*\x)}) -- (-\x,-{sqrt(1-\x*\x)});
					\draw [line width=0.05mm] (\hxi*\x,-{\hxi*sqrt(1-\x*\x)}) -- (\x,-{sqrt(1-\x*\x)});
				}	
				
				\foreach \x in {0.382, 0.924}{
					\draw [line width=0.05mm] (2*\hxi*\x,{2*\hxi*sqrt(1-\x*\x)}) -- (\x,{sqrt(1-\x*\x)});
					\draw [line width=0.05mm] (-2*\hxi*\x,{2*\hxi*sqrt(1-\x*\x)}) -- (-\x,{sqrt(1-\x*\x)});
					\draw [line width=0.05mm] (-2*\hxi*\x,-{2*\hxi*sqrt(1-\x*\x)}) -- (-\x,-{sqrt(1-\x*\x)});
					\draw [line width=0.05mm] (2*\hxi*\x,-{2*\hxi*sqrt(1-\x*\x)}) -- (\x,-{sqrt(1-\x*\x)});
				}
				
				\draw [line width=0.05mm, dashed] (0,0) -- (0,1);
				\draw [line width=0.05mm, dashed] (0,0) -- (-1,0);
				\draw [line width=0.05mm, dashed] (0,0) -- (0,-1);		
				\draw [-,red, very thick] (0:0) -- (0:1);
				
				\end{tikzpicture}
				\subcaption{}
			\end{center}
		\end{subfigure}
		\hfill
		\begin{subfigure}{0.495\linewidth}
			\begin{center}
				\begin{tikzpicture}[baseline, scale = 0.675]
				
				\def \N {5}			% Anzahl der Elemente in xi-Richtung
				\def \hxi{1/6}		% Elementgröße in xi-Richtung
				
				\def \M {3} 		% Anzahl der Elemente in eta-Richtung
				\def \heta{1/4}		% Elementgröße in eta-Richtung
				
				\node at (2,4.5) {\large $\widehat{\mathcal{M}}^\mu$};
				
				\fill[gray!30] (0,0) rectangle (4,4);
				
				\draw[-stealth] (4,0) -- (5, 0)  node[below, xshift = -2] {$\zeta_1$};
				\draw[-stealth] (0,4) -- (0, 5) node[left, yshift = -2] {$\zeta_2$};

				\draw[red] (0,0) -- (4, 0);
				\draw[red] (0,4) -- (4, 4);
				
				% Graded refinement:
				\foreach \i in {1,...,\N}{
					\draw (\i*\i*\hxi*\hxi*4,0) -- (\i*\i*\hxi*\hxi*4,4);
				}
				
				\draw[cyan] (0,0) -- (0, 4);
				\draw[blue] (4,0) -- (4, 4);
				
				\foreach \i in {1,...,3}{
					\draw[dashed] (0, \i) -- (4, \i);
				}
				
				% Hierarchical refinement: 
				\foreach \j in {0,...,3}{	
					\draw (\hxi*\hxi*4, 2*\heta + \j) -- (4, 2*\heta + \j); 			
					\foreach \i in {1,3}{
						\draw (\hxi*\hxi*2*2*4, \i*\heta + \j) -- (4, \i*\heta + \j);
					}
				}
				
				\node[left, yshift = 2] at (0,0) {$0$};
				\node[left] at (0,1) {$\frac14$};
				\node[left] at (0,2) {$\frac24$};
				\node[left] at (0,3) {$\frac34$};
				\node[left] at (0,4) {$1$};
				
				\node[below] at (\hxi*\hxi*1*1*4,0) {$\frac{1}{36}$};
				\node[below] at (\hxi*\hxi*2*2*4,0) {$\frac{1}{9}$};
				\node[below] at (\hxi*\hxi*3*3*4,0) {$\frac{1}{4}$};
				\node[below] at (\hxi*\hxi*4*4*4,0) {$\frac{4}{9}$};
				\node[below] at (\hxi*\hxi*5*5*4,0) {$\frac{25}{36}$};
				\node[below] at (4,0) {$1$};
				
				\draw [->, very thick] (4.8,2) to [out=30,in=150] (6.3,2) node[above] at (5.55,2.5) {\large $\bs F$};
				\end{tikzpicture}	
				\hspace*{2mm}
				\begin{tikzpicture}[scale = 1.35]
				
				\def \N {5}			% Anzahl der Elemente in xi-Richtung
				\def \hxi{1/6}		% Elementgröße in xi-Richtung
				
				\node at (0,1.25) {\large $\mathcal{M}^\mu$};
				
				\fill[gray!30] (0,0) circle (1);
				
				\foreach \i in {0,...,\N}{
					\draw [line width=0.05mm] (0:0) circle (\i*\i*\hxi*\hxi);
				}
				\draw [blue] (0:0) circle (1);
				
				\foreach \x in {0.707}{
					\draw [line width=0.05mm] (\hxi*\hxi*\x,{\hxi*\hxi*sqrt(1-\x*\x)}) -- (\x,{sqrt(1-\x*\x)});
					\draw [line width=0.05mm] (-\hxi*\hxi*\x,{\hxi*\hxi*sqrt(1-\x*\x)}) -- (-\x,{sqrt(1-\x*\x)});
					\draw [line width=0.05mm] (-\hxi*\hxi*\x,-{\hxi*\hxi*sqrt(1-\x*\x)}) -- (-\x,-{sqrt(1-\x*\x)});
					\draw [line width=0.05mm] (\hxi*\hxi*\x,-{\hxi*\hxi*sqrt(1-\x*\x)}) -- (\x,-{sqrt(1-\x*\x)});
				}	
				
				\foreach \x in {0.382, 0.924}{
					\draw [line width=0.05mm] (2*\hxi*2*\hxi*\x,{2*\hxi*2*\hxi*sqrt(1-\x*\x)}) -- (\x,{sqrt(1-\x*\x)});
					\draw [line width=0.05mm] (-2*\hxi*2*\hxi*\x,{2*\hxi*2*\hxi*sqrt(1-\x*\x)}) -- (-\x,{sqrt(1-\x*\x)});
					\draw [line width=0.05mm] (-2*\hxi*2*\hxi*\x,-{2*\hxi*2*\hxi*sqrt(1-\x*\x)}) -- (-\x,-{sqrt(1-\x*\x)});
					\draw [line width=0.05mm] (2*\hxi*2*\hxi*\x,-{2*\hxi*2*\hxi*sqrt(1-\x*\x)}) -- (\x,-{sqrt(1-\x*\x)});
				}	
				
				\draw [line width=0.05mm, dashed] (0,0) -- (0,1);
				\draw [line width=0.05mm, dashed] (0,0) -- (-1,0);
				\draw [line width=0.05mm, dashed] (0,0) -- (0,-1);		
				\draw [-,red,very thick] (0:0) -- (0:1);
				
				\end{tikzpicture}
				\subcaption{}
			\end{center}
		\end{subfigure}
		\vspace{-2mm}
		\caption{Hierarchical parametric and physical B\'ezier meshes $\widehat{\mathcal{M}}$ and $\mathcal{M}$ after refining with $J_1=6$ and $J_2=16$. Solid black lines are used for new subdivisions and dashed black lines for the initial coarse meshes $\widehat{\mathcal{M}_0}$ and $\mathcal{M}_0$. Note the difference to the tensor product meshes in Figure \ref{fig: refined meshes}. (a) Uniform refinement. (b) Graded refinement with grading parameter $\mu =\frac12$.}
		\label{fig: Graded hierarchical mesh}		
	\end{center}
\end{figure}

\begin{figure}[t]
	\begin{center}
		\begin{subfigure}{0.495\linewidth}
			\begin{center}
				\begin{tikzpicture}[baseline, scale = 1.3]		
				\draw[fill = gray!30] (0:0) circle (1);
				
				\draw [line width=0.05mm, dashed] (0,0) -- (0,1);
				\draw [line width=0.05mm, dashed] (0,0) -- (-1,0);
				\draw [line width=0.05mm, dashed] (0,0) -- (0,-1);		
				\draw [-,red, very thick] (0:0) -- (0:1);
				\end{tikzpicture}
				\begin{tikzpicture}[baseline, scale = 1.1]		
				
				\draw[fill = gray!30] (0:0) circle (1);
				
				% formula: \x = sin(\heta*j*pi/2)
				\foreach \x in {0.707}{
					\draw [line width=0.05mm] (0,0) -- (\x,{sqrt(1-\x*\x)});
					\draw [line width=0.05mm] (0,0) -- (-\x,{sqrt(1-\x*\x)});
					\draw [line width=0.05mm] (0,0) -- (-\x,-{sqrt(1-\x*\x)});
					\draw [line width=0.05mm] (0,0) -- (\x,-{sqrt(1-\x*\x)});
				}	
				
				\draw [line width=0.05mm, dashed] (0,0) -- (0,1);
				\draw [line width=0.05mm, dashed] (0,0) -- (-1,0);
				\draw [line width=0.05mm, dashed] (0,0) -- (0,-1);		
				\draw [-,red, very thick] (0:0) -- (0:1);
				\end{tikzpicture}
				\begin{tikzpicture}[baseline, scale = 0.85]		
				\draw[fill = gray!30] (0:0) circle (1);
				
				% formula: \x = sin(\heta*j*pi/2)
				\foreach \x in {0.382,0.707, 0.924}{
					\draw [line width=0.05mm] (0,0) -- (\x,{sqrt(1-\x*\x)});
					\draw [line width=0.05mm] (0,0) -- (-\x,{sqrt(1-\x*\x)});
					\draw [line width=0.05mm] (0,0) -- (-\x,-{sqrt(1-\x*\x)});
					\draw [line width=0.05mm] (0,0) -- (\x,-{sqrt(1-\x*\x)});
				}	
				
				\draw [line width=0.05mm, dashed] (0,0) -- (0,1);
				\draw [line width=0.05mm, dashed] (0,0) -- (-1,0);
				\draw [line width=0.05mm, dashed] (0,0) -- (0,-1);		
				\draw [-,red, very thick] (0:0) -- (0:1);
				\end{tikzpicture}
				\begin{tikzpicture}[baseline, scale = 0.6]		
				\draw[fill = gray!30] (0:0) circle (1);
				
				% formula: \x = sin(\heta*j*pi/2)
				\foreach \x in {0.195,0.382, 0.555, 0.707, 0.831, 0.924, 0.981}{
					\draw [line width=0.05mm] (0,0) -- (\x,{sqrt(1-\x*\x)});
					\draw [line width=0.05mm] (0,0) -- (-\x,{sqrt(1-\x*\x)});
					\draw [line width=0.05mm] (0,0) -- (-\x,-{sqrt(1-\x*\x)});
					\draw [line width=0.05mm] (0,0) -- (\x,-{sqrt(1-\x*\x)});
				}	
				
				\draw [line width=0.05mm, dashed] (0,0) -- (0,1);
				\draw [line width=0.05mm, dashed] (0,0) -- (-1,0);
				\draw [line width=0.05mm, dashed] (0,0) -- (0,-1);		
				\draw [-,red, very thick] (0:0) -- (0:1);
				\end{tikzpicture}
				\subcaption{}
			\end{center}
		\end{subfigure}
		\hfill
		\begin{subfigure}{0.495\linewidth}
			\begin{center}
				\begin{tikzpicture}[baseline, scale = 1.3]		
				\draw[fill = gray!30] (0:0) circle (1);
				
				\draw [line width=0.05mm, dashed] (0,0) -- (0,1);
				\draw [line width=0.05mm, dashed] (0,0) -- (-1,0);
				\draw [line width=0.05mm, dashed] (0,0) -- (0,-1);		
				\draw [-,red, very thick] (0:0) -- (0:1);
				\end{tikzpicture}
				\begin{tikzpicture}[baseline, scale = 1.1]		
				
				\draw[fill = gray!30] (0:0) circle (1);
				
				\draw [line width=0.05mm, dashed] (0,0) -- (0,1);
				\draw [line width=0.05mm, dashed] (0,0) -- (-1,0);
				\draw [line width=0.05mm, dashed] (0,0) -- (0,-1);		
				\draw [-,red, very thick] (0:0) -- (0:1);
				\end{tikzpicture}
				\begin{tikzpicture}[baseline, scale = 0.85]		
				\draw[fill = gray!30] (0:0) circle (1);	
				
				\draw [line width=0.05mm, dashed] (0,0) -- (0,1);
				\draw [line width=0.05mm, dashed] (0,0) -- (-1,0);
				\draw [line width=0.05mm, dashed] (0,0) -- (0,-1);		
				\draw [-,red, very thick] (0:0) -- (0:1);
				\end{tikzpicture}
				\begin{tikzpicture}[baseline, scale = 0.6]		
				\draw[fill = gray!30] (0:0) circle (1);
				
				\draw [line width=0.05mm, dashed] (0,0) -- (0,1);
				\draw [line width=0.05mm, dashed] (0,0) -- (-1,0);
				\draw [line width=0.05mm, dashed] (0,0) -- (0,-1);		
				\draw [-,red, very thick] (0:0) -- (0:1);
				\end{tikzpicture}
				\subcaption{}
				\label{fig: elements close to center hierarchical}
			\end{center}
		\end{subfigure}
		\vspace{-2mm}
		\caption{Sketch of the physical mesh elements near the conical point during refinement. (a) tensor product refinement. (b) hierarchical refinement. The scaling between the refinement steps is manipulated for better visibility.}
		\label{fig: elements close to center}
	\end{center}	
\end{figure}
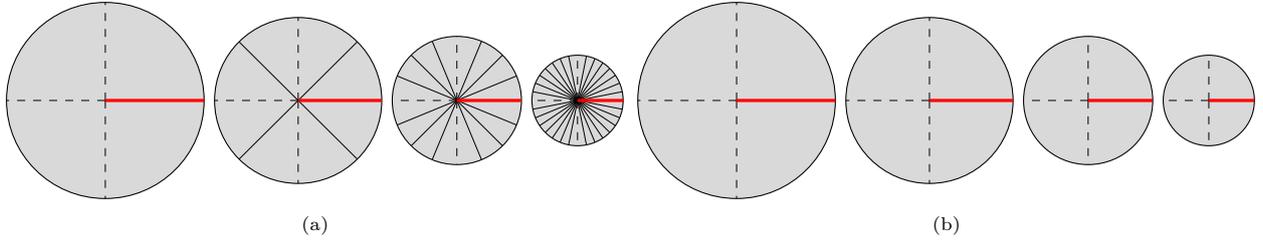

\begin{example}[Eigenfunction of Type (A)]
	We perform the same experiment as in Example \ref{ex: approximation of singular eigenfunction}, this time employing a hierarchical refinement approach. In more detail, we compute the eigenvalue $\lambda_{\nu_1,1}$ of the unit disk with crack and its corresponding eigenfunction $u_{\nu_1,1}$, which is characterized by its low regularity, on both uniform and graded hierarchical meshes. The approximation errors for NURBS of polynomial degree $\bs p =(p,p)$, where $p\in\{2,3,4\}$, and regularity $\bs k= (p-1,p-1)$ are presented in Figure \ref{fig: approximation errors for second eigenfunction hierarchical} and closely resemble those obtained with tensor product meshes in Figure \ref{fig: approximation errors for second eigenfunction variational crime}. Optimal convergence rates for the $H^1_h(\Omega)$- and $L^2_h(\Omega)$-eigenfunction error and the absolute eigenvalue error are recovered by using an appropriate grading parameter. An explicit comparison of the approximation errors generated by hierarchical and tensor product meshes is depicted in Figure \ref{fig: tensor vs hierachical mesh second eigenfunction}, where NURBS of degree $\bs p = (4,4)$ and regularity $\bs k = (3,3)$ are refined gradually and uniformly with grading parameters $\mu = 0.9 \cdot \frac{\nu_1}{4} = 0.1125$ and $\mu =1$, respectively. The results indicate that the approximation constant of hierarchical meshes is slightly better than the one of tensor product meshes. This effect can be explained by the omission of redundant degrees of freedom during the hierarchical refinement procedure. As illustrated in Figure \ref{fig: elements close to center}, the circular elements close to the singularity are subdivided fewer times while the same approximation accuracy is achieved. 
	
	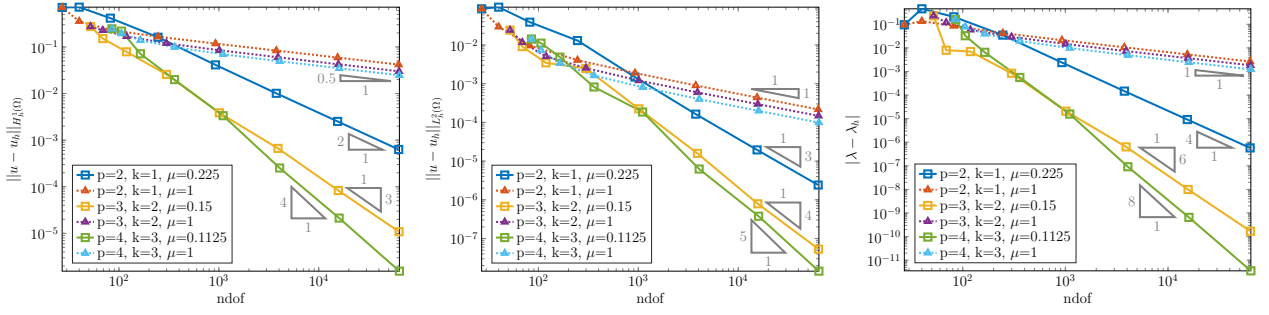
\begin{figure}[t]
		\resizebox{0.32\linewidth}{!}{% This file was created by matlab2tikz.
%
%The latest updates can be retrieved from
%  http://www.mathworks.com/matlabcentral/fileexchange/22022-matlab2tikz-matlab2tikz
%where you can also make suggestions and rate matlab2tikz.
%
\definecolor{mycolor1}{rgb}{0.00000,0.44700,0.74100}%
\definecolor{mycolor2}{rgb}{0.85000,0.32500,0.09800}%
\definecolor{mycolor3}{rgb}{0.92900,0.69400,0.12500}%
\definecolor{mycolor4}{rgb}{0.49400,0.18400,0.55600}%
\definecolor{mycolor5}{rgb}{0.46600,0.67400,0.18800}%
\definecolor{mycolor6}{rgb}{0.30100,0.74500,0.93300}%
\begin{tikzpicture}
\tikzstyle{every node}=[font=\Large]
\begin{axis}[%
width=4.568in,
height=3.603in,
at={(0.766in,0.486in)},
scale only axis,
xmode=log,
xmin=27,
xmax=64192,
xminorticks=true,
xlabel={ndof},
ymode=log,
ymin=1.54532894553275e-06,
ymax=0.714027634338522,
yminorticks=true,
ylabel={$||u-u_h||_{H^1_h(\Omega)}$},
axis background/.style={fill=white},
legend style={at={(0.03,0.03)}, anchor=south west, legend cell align=left, align=left, draw=white!15!black}
]
\addplot [color=mycolor1, line width=2.0pt, mark size=3.5pt, mark=square, mark options={solid, mycolor1}]
  table[row sep=crcr]{%
27	0.704063044506597\\
40	0.714027634338522\\
82	0.41496082570124\\
246	0.161804539766382\\
926	0.0412091998471589\\
3758	0.0101242294875468\\
15438	0.00252340986672391\\
63118	0.000631502542642988\\
};
\addlegendentry{p=2, k=1, $\mu$=0.225}

\addplot [color=mycolor2, dotted, line width=2.0pt, mark size=3.3pt, mark=triangle, mark options={solid, mycolor2}]
  table[row sep=crcr]{%
27	0.704063044506597\\
40	0.355963806077109\\
82	0.232051691599364\\
246	0.165998371619807\\
926	0.118052452256192\\
3758	0.0836140919729532\\
15438	0.0591488564892584\\
63118	0.0418283025727952\\
};
\addlegendentry{p=2, k=1, $\mu$=1}

\addplot [color=mycolor3, line width=2.0pt, mark size=3.5pt, mark=square, mark options={solid, mycolor3}]
  table[row sep=crcr]{%
52	0.274663630306856\\
69	0.151268473544978\\
119	0.0786734605253124\\
299	0.0257689035488057\\
1011	0.00386155081203868\\
3907	0.000665616393264005\\
15715	8.30087531915839e-05\\
63651	1.08494909805248e-05\\
};
\addlegendentry{p=3, k=2, $\mu$=0.15}

\addplot [color=mycolor4, dotted, line width=2.0pt, mark size=3.3pt, mark=triangle, mark options={solid, mycolor4}]
  table[row sep=crcr]{%
52	0.274663630306856\\
69	0.227343471106563\\
119	0.168994444449163\\
299	0.120019401665133\\
1011	0.084919042383957\\
3907	0.0600521334949714\\
15715	0.0424613421858944\\
63651	0.0300233447407702\\
};
\addlegendentry{p=3, k=2, $\mu$=1}

\addplot [color=mycolor5, line width=2.0pt, mark size=3.5pt, mark=square, mark options={solid, mycolor5}]
  table[row sep=crcr]{%
85	0.245261895704637\\
106	0.218692013038747\\
164	0.0713589159201717\\
360	0.0198924119675585\\
1104	0.00335118724551938\\
4064	0.000252660956848008\\
16000	2.12837883820422e-05\\
64192	1.54532894553275e-06\\
};
\addlegendentry{p=4, k=3, $\mu$=0.1125}

\addplot [color=mycolor6, dotted, line width=2.0pt, mark size=3.3pt, mark=triangle, mark options={solid, mycolor6}]
  table[row sep=crcr]{%
85	0.245261895704637\\
106	0.193871062936146\\
164	0.139167253373216\\
360	0.098641196706933\\
1104	0.0697693805278115\\
4064	0.0493315582468908\\
16000	0.0348807049560481\\
64192	0.0246634803276778\\
};
\addlegendentry{p=4, k=3, $\mu$=1}

\logLogSlopeTriangle{0.825}{0.15}{0.72}{0.25}{gray, line width = 2pt}{0.5};
\logLogSlopeTriangle{0.85}{0.1}{0.46}{01}{gray, line width = 2pt}{2};
\logLogSlopeTriangleUpsideDown{0.945}{0.1}{0.315}{1.5}{gray, line width = 2pt}{3};
\logLogSlopeTriangle{0.68}{0.1}{0.2}{2}{gray, line width = 2pt}{4};

\end{axis}
\end{tikzpicture}%}
		\hspace*{1mm}
		\resizebox{0.32\linewidth}{!}{% This file was created by matlab2tikz.
%
%The latest updates can be retrieved from
%  http://www.mathworks.com/matlabcentral/fileexchange/22022-matlab2tikz-matlab2tikz
%where you can also make suggestions and rate matlab2tikz.
%
\definecolor{mycolor1}{rgb}{0.00000,0.44700,0.74100}%
\definecolor{mycolor2}{rgb}{0.85000,0.32500,0.09800}%
\definecolor{mycolor3}{rgb}{0.92900,0.69400,0.12500}%
\definecolor{mycolor4}{rgb}{0.49400,0.18400,0.55600}%
\definecolor{mycolor5}{rgb}{0.46600,0.67400,0.18800}%
\definecolor{mycolor6}{rgb}{0.30100,0.74500,0.93300}%
\begin{tikzpicture}
\tikzstyle{every node}=[font=\Large]
\begin{axis}[%
width=4.568in,
height=3.603in,
at={(0.766in,0.486in)},
scale only axis,
xmode=log,
xmin=27,
xmax=64192,
xminorticks=true,
xlabel={ndof},
ymode=log,
ymin=1.435996037364e-08,
ymax=0.0949391013233217,
yminorticks=true,
ylabel={$||u-u_h||_{L^2_h(\Omega)}$},
axis background/.style={fill=white},
legend style={at={(0.03,0.03)}, anchor=south west, legend cell align=left, align=left, draw=white!15!black}
]
\addplot [color=mycolor1, line width=2.0pt, mark size=3.5pt, mark=square, mark options={solid, mycolor1}]
  table[row sep=crcr]{%
27	0.0860868587643081\\
40	0.0949391013233217\\
82	0.0388838776215731\\
246	0.0130220841758417\\
926	0.00148137934139673\\
3758	0.000163519087629206\\
15438	1.94971577019341e-05\\
63118	2.40968621664826e-06\\
};
\addlegendentry{p=2, k=1, $\mu$=0.225}

\addplot [color=mycolor2, dotted, line width=2.0pt, mark size=3.3pt, mark=triangle, mark options={solid, mycolor2}]
  table[row sep=crcr]{%
27	0.0860868587643081\\
40	0.0295102810755566\\
82	0.00961925000992503\\
246	0.00405855980898774\\
926	0.00189352785225156\\
3758	0.00090061430239201\\
15438	0.000438799311218948\\
63118	0.000216471306340585\\
};
\addlegendentry{p=2, k=1, $\mu$=1}

\addplot [color=mycolor3, line width=2.0pt, mark size=3.5pt, mark=square, mark options={solid, mycolor3}]
  table[row sep=crcr]{%
52	0.0242102770274886\\
69	0.00907800455244273\\
119	0.00346246367328862\\
299	0.00247393460500645\\
1011	0.000224489336012727\\
3907	1.58198703258883e-05\\
15715	7.88609023795975e-07\\
63651	5.32936056508302e-08\\
};
\addlegendentry{p=3, k=2, $\mu$=0.15}

\addplot [color=mycolor4, dotted, line width=2.0pt, mark size=3.3pt, mark=triangle, mark options={solid, mycolor4}]
  table[row sep=crcr]{%
52	0.0242102770274886\\
69	0.0117095961019742\\
119	0.00507890085199116\\
299	0.00254133519201163\\
1011	0.00120782620194031\\
3907	0.000597227907851035\\
15715	0.000296833568083216\\
63651	0.000147971195235997\\
};
\addlegendentry{p=3, k=2, $\mu$=1}

\addplot [color=mycolor5, line width=2.0pt, mark size=3.5pt, mark=square, mark options={solid, mycolor5}]
  table[row sep=crcr]{%
85	0.0142911219244662\\
106	0.0111783727259739\\
164	0.00486684992232974\\
360	0.00082230239212875\\
1104	0.000185349669865249\\
4064	6.2507151147787e-06\\
16000	3.76600260068177e-07\\
64192	1.435996037364e-08\\
};
\addlegendentry{p=4, k=3, $\mu$=0.1125}

\addplot [color=mycolor6, dotted, line width=2.0pt, mark size=3.3pt, mark=triangle, mark options={solid, mycolor6}]
  table[row sep=crcr]{%
85	0.0142911219244662\\
106	0.00734986680720939\\
164	0.0033832802509867\\
360	0.00162446726623942\\
1104	0.000811767766895346\\
4064	0.000400061392250434\\
16000	0.00019900694052975\\
64192	9.92567713314827e-05\\
};
\addlegendentry{p=4, k=3, $\mu$=1}

\logLogSlopeTriangleUpsideDown{0.94}{0.14}{0.69}{0.5}{gray, line width = 2pt}{1};
\logLogSlopeTriangleUpsideDown{0.945}{0.1}{0.47}{1.5}{gray, line width = 2pt}{3};
\logLogSlopeTriangleUpsideDown{0.945}{0.1}{0.26}{2}{gray, line width = 2pt}{4};
\logLogSlopeTriangle{0.8}{0.1}{0.07}{2.5}{gray, line width = 2pt}{5};

\end{axis}
\end{tikzpicture}%}
		\hspace*{1mm}
		\resizebox{0.33\linewidth}{!}{% This file was created by matlab2tikz.
%
%The latest updates can be retrieved from
%  http://www.mathworks.com/matlabcentral/fileexchange/22022-matlab2tikz-matlab2tikz
%where you can also make suggestions and rate matlab2tikz.
%
\definecolor{mycolor1}{rgb}{0.00000,0.44700,0.74100}%
\definecolor{mycolor2}{rgb}{0.85000,0.32500,0.09800}%
\definecolor{mycolor3}{rgb}{0.92900,0.69400,0.12500}%
\definecolor{mycolor4}{rgb}{0.49400,0.18400,0.55600}%
\definecolor{mycolor5}{rgb}{0.46600,0.67400,0.18800}%
\definecolor{mycolor6}{rgb}{0.30100,0.74500,0.93300}%
\begin{tikzpicture}
\tikzstyle{every node}=[font=\Large]
\begin{axis}[%
width=4.65in,
height=3.543in,
at={(0.78in,0.478in)},
scale only axis,
xmode=log,
xmin=27,
xmax=64192,
xminorticks=true,
xlabel={ndof},
ymode=log,
ymin=3.5473846082823e-12,
ymax=0.45492084123245,
yminorticks=true,
ylabel={$|\lambda-\lambda_h|$},
axis background/.style={fill=white},
legend style={at={(0.03,0.03)}, anchor=south west, legend cell align=left, align=left, draw=white!15!black}
]
\addplot [color=mycolor1, line width=2.0pt, mark size=3.5pt, mark=square, mark options={solid, mycolor1}]
  table[row sep=crcr]{%
27	0.0964562259659676\\
40	0.45492084123245\\
82	0.208080623397649\\
246	0.0349090472651721\\
926	0.00240087756493423\\
3758	0.000147410017326877\\
15438	9.1984556451763e-06\\
63118	5.76922218442633e-07\\
};
\addlegendentry{p=2, k=1, $\mu$=0.225}

\addplot [color=mycolor2, dotted, line width=2.0pt, mark size=3.3pt, mark=triangle, mark options={solid, mycolor2}]
  table[row sep=crcr]{%
27	0.0964562259659676\\
40	0.135616705744871\\
82	0.0850708604580124\\
246	0.0426700051611526\\
926	0.0212560601158813\\
3758	0.0106152047323995\\
15438	0.00530659757872165\\
63118	0.00265341029508015\\
};
\addlegendentry{p=2, k=1, $\mu$=1}

\addplot [color=mycolor3, line width=2.0pt, mark size=3.5pt, mark=square, mark options={solid, mycolor3}]
  table[row sep=crcr]{%
52	0.230709987377175\\
69	0.00798908860394043\\
119	0.00700935975297234\\
299	0.000844130143395461\\
1011	2.08571681703518e-05\\
3907	6.37515867651928e-07\\
15715	9.96152138554862e-09\\
63651	1.7036505539636e-10\\
};
\addlegendentry{p=3, k=2, $\mu$=0.15}

\addplot [color=mycolor4, dotted, line width=2.0pt, mark size=3.3pt, mark=triangle, mark options={solid, mycolor4}]
  table[row sep=crcr]{%
52	0.230709987377175\\
69	0.118928848922451\\
119	0.0586197026996498\\
299	0.0293046440695797\\
1011	0.0146561173010173\\
3907	0.00733044659143545\\
15715	0.00366602869781119\\
63651	0.00183324262107121\\
};
\addlegendentry{p=3, k=2, $\mu$=1}

\addplot [color=mycolor5, line width=2.0pt, mark size=3.5pt, mark=square, mark options={solid, mycolor5}]
  table[row sep=crcr]{%
85	0.161030701651573\\
106	0.0328862106645502\\
164	0.00652394480608187\\
360	0.000558446724063799\\
1104	1.57037113179825e-05\\
4064	9.18157621043747e-08\\
16000	6.54097220831318e-10\\
64192	3.5473846082823e-12\\
};
\addlegendentry{p=4, k=3, $\mu$=0.1125}

\addplot [color=mycolor6, dotted, line width=2.0pt, mark size=3.3pt, mark=triangle, mark options={solid, mycolor6}]
  table[row sep=crcr]{%
85	0.161030701651573\\
106	0.0788466596309032\\
164	0.039329538106486\\
360	0.0196660965616751\\
1104	0.00983770795966166\\
4064	0.00492035565685001\\
16000	0.00246059550390854\\
64192	0.00123040748667513\\
};
\addlegendentry{p=4, k=3, $\mu$=1}

\logLogSlopeTriangle{0.84}{0.14}{0.745}{0.5}{gray, line width = 2pt}{1};
\logLogSlopeTriangle{0.845}{0.1}{0.47}{2}{gray, line width = 2pt}{4};
\logLogSlopeTriangleUpsideDown{0.78}{0.1}{0.47}{3}{gray, line width = 2pt}{6};
\logLogSlopeTriangle{0.68}{0.1}{0.205}{4}{gray, line width = 2pt}{8};

\end{axis}
\end{tikzpicture}%}
		
		\caption{Approximation errors for the eigenfunction $u=u_{\nu_1,1}$ and its corresponding eigenvalue $\lambda=\lambda_{\nu_1,1}$ using gradually and uniformly refined hierarchical NURBS of degree $\bs p =(p,p)$, $p\in \{2,3,4\}$, and regularity $\bs k = (p-1,p-1)$. We set $\mu = 0.9 \frac{\nu_1}{p} = \frac{0.9}{2p}$ for graded meshes and $\mu = 1$ corresponds to uniform refinement.}
		\label{fig: approximation errors for second eigenfunction hierarchical}
	\end{figure}
	
	\begin{figure}[t]
		\resizebox{0.325\linewidth}{!}{% This file was created by matlab2tikz.
%
%The latest updates can be retrieved from
%  http://www.mathworks.com/matlabcentral/fileexchange/22022-matlab2tikz-matlab2tikz
%where you can also make suggestions and rate matlab2tikz.
%
\definecolor{mycolor1}{rgb}{0.00000,0.44700,0.74100}%
\definecolor{mycolor2}{rgb}{0.85000,0.32500,0.09800}%
\definecolor{mycolor3}{rgb}{0.92900,0.69400,0.12500}%
\definecolor{mycolor4}{rgb}{0.49400,0.18400,0.55600}%
\begin{tikzpicture}

\begin{axis}[%
width=4.568in,
height=3.603in,
at={(0.766in,0.486in)},
scale only axis,
xmode=log,
xmin=85,
xmax=69300,
xminorticks=true,
xlabel={ndof},
ymode=log,
ymin=1.49881653044161e-06,
ymax=0.285130980933794,
yminorticks=true,
ylabel={$||u-u_h||_{H^1_h(\Omega)}$},
label style={font=\Large\color{white!15!black}},
tick label style={font=\Large},
axis background/.style={fill=white},
legend style={at={(0.03,0.03)}, anchor=south west, legend cell align=left, align=left, draw=white!15!black, font=\large}
]
\addplot [color=mycolor1, line width=2.0pt, mark size=3.5pt, mark=square, mark options={solid, mycolor1}]
  table[row sep=crcr]{%
85	0.245261895704637\\
106	0.218692013038747\\
164	0.0713589159201717\\
360	0.0198924119675585\\
1104	0.00335118724551938\\
4064	0.000252660956848008\\
16000	2.12837883820422e-05\\
64192	1.54532894553275e-06\\
};
\addlegendentry{p=4, k=3, $\mu$=0.1125, hierarchical}

\addplot [color=mycolor2, dotted, line width=2.0pt, mark size=3.3pt, mark=triangle, mark options={solid, mycolor2}]
  table[row sep=crcr]{%
85	0.245261895704637\\
106	0.193871062936146\\
164	0.139167253373216\\
360	0.098641196706933\\
1104	0.0697693805278115\\
4064	0.0493315582468908\\
16000	0.0348807049560481\\
64192	0.0246634803276778\\
};
\addlegendentry{p=4, k=3, $\mu$=1, hierarchical}

\addplot [color=mycolor3, line width=2.0pt, mark size=3.5pt, mark=square, mark options={solid, mycolor3}]
  table[row sep=crcr]{%
85	0.285130980933794\\
126	0.197192902111528\\
232	0.0684174963842545\\
540	0.0184593947159994\\
1540	0.00309010425777908\\
5076	0.000233681290113897\\
18292	2.04180864991597e-05\\
69300	1.49881653044161e-06\\
};
\addlegendentry{p=4, k=3, $\mu$=0.1125, tensor}

\addplot [color=mycolor4, dotted, line width=2.0pt, mark size=3.3pt, mark=triangle, mark options={solid, mycolor4}]
  table[row sep=crcr]{%
85	0.285130980933794\\
126	0.215215149918408\\
232	0.152336446494169\\
540	0.107703365407248\\
1540	0.0761198911791054\\
5076	0.0538043220381938\\
18292	0.0380373770185348\\
69300	0.0268934998324739\\
};
\addlegendentry{p=4, k=3, $\mu$=1, tensor}

\logLogSlopeTriangle{0.58}{0.2}{0.8}{0.25}{gray, line width = 2pt}{0.5};
\logLogSlopeTriangle{0.4}{0.14}{0.42}{2}{gray, line width = 2pt}{4};

\end{axis}
\end{tikzpicture}%}
		\hspace*{1mm}
		\resizebox{0.325\linewidth}{!}{% This file was created by matlab2tikz.
%
%The latest updates can be retrieved from
%  http://www.mathworks.com/matlabcentral/fileexchange/22022-matlab2tikz-matlab2tikz
%where you can also make suggestions and rate matlab2tikz.
%
\definecolor{mycolor1}{rgb}{0.00000,0.44700,0.74100}%
\definecolor{mycolor2}{rgb}{0.85000,0.32500,0.09800}%
\definecolor{mycolor3}{rgb}{0.92900,0.69400,0.12500}%
\definecolor{mycolor4}{rgb}{0.49400,0.18400,0.55600}%
\begin{tikzpicture}

\begin{axis}[%
width=4.568in,
height=3.603in,
at={(0.766in,0.486in)},
scale only axis,
xmode=log,
xmin=85,
xmax=69300,
xminorticks=true,
xlabel={ndof},
ymode=log,
ymin=1.49210312905765e-08,
ymax=0.0218245255776148,
yminorticks=true,
ylabel={$||u-u_h||_{L^2_h(\Omega)}$},
label style={font=\Large\color{white!15!black}},
tick label style={font=\Large},
axis background/.style={fill=white},
legend style={at={(0.03,0.03)}, anchor=south west, legend cell align=left, align=left, draw=white!15!black, font =\large}
]
\addplot [color=mycolor1, line width=2.0pt, mark size=3.5pt, mark=square, mark options={solid, mycolor1}]
  table[row sep=crcr]{%
85	0.0142911219244662\\
106	0.0111783727259739\\
164	0.00486684992232974\\
360	0.00082230239212875\\
1104	0.000185349669865249\\
4064	6.2507151147787e-06\\
16000	3.76600260068177e-07\\
64192	1.435996037364e-08\\
};
\addlegendentry{p=4, k=3, $\mu$=0.1125, hierarchical}

\addplot [color=mycolor2, dotted, line width=2.0pt, mark size=3.3pt, mark=triangle, mark options={solid, mycolor2}]
  table[row sep=crcr]{%
85	0.0142911219244662\\
106	0.00734986680720939\\
164	0.0033832802509867\\
360	0.00162446726623942\\
1104	0.000811767766895346\\
4064	0.000400061392250434\\
16000	0.00019900694052975\\
64192	9.92567713314827e-05\\
};
\addlegendentry{p=4, k=3, $\mu$=1, hierarchical}

\addplot [color=mycolor3, line width=2.0pt, mark size=3.5pt, mark=square, mark options={solid, mycolor3}]
  table[row sep=crcr]{%
85	0.0218245255776148\\
126	0.0166805972139888\\
232	0.00636391724458617\\
540	0.00086718477449919\\
1540	0.000190757296784033\\
5076	6.47598826435504e-06\\
18292	3.90997195279284e-07\\
69300	1.49210312905765e-08\\
};
\addlegendentry{p=4, k=3, $\mu$=0.1125, tensor}

\addplot [color=mycolor4, dotted, line width=2.0pt, mark size=3.3pt, mark=triangle, mark options={solid, mycolor4}]
  table[row sep=crcr]{%
85	0.0218245255776148\\
126	0.0106005765107467\\
232	0.00503350639672426\\
540	0.00246910262315302\\
1540	0.00123304497087492\\
5076	0.000613714461013339\\
18292	0.000306280721830317\\
69300	0.00015299766670742\\
};
\addlegendentry{p=4, k=3, $\mu$=1, tensor}

\logLogSlopeTriangle{0.75}{0.2}{0.6}{0.5}{gray, line width = 2pt}{1};
\logLogSlopeTriangle{0.4}{0.14}{0.43}{2.5}{gray, line width = 2pt}{5};

\end{axis}
\end{tikzpicture}%}
		\hspace*{1mm}
		\resizebox{0.325\linewidth}{!}{% This file was created by matlab2tikz.
%
%The latest updates can be retrieved from
%  http://www.mathworks.com/matlabcentral/fileexchange/22022-matlab2tikz-matlab2tikz
%where you can also make suggestions and rate matlab2tikz.
%
\definecolor{mycolor1}{rgb}{0.00000,0.44700,0.74100}%
\definecolor{mycolor2}{rgb}{0.85000,0.32500,0.09800}%
\definecolor{mycolor3}{rgb}{0.92900,0.69400,0.12500}%
\definecolor{mycolor4}{rgb}{0.49400,0.18400,0.55600}%
\begin{tikzpicture}

\begin{axis}[%
width=4.568in,
height=3.603in,
at={(0.766in,0.486in)},
scale only axis,
xmode=log,
xmin=85,
xmax=69300,
xminorticks=true,
xlabel={ndof},
ymode=log,
ymin=3.36264349698467e-12,
ymax=0.233719550518147,
yminorticks=true,
ylabel={$|\lambda-\lambda_h|$},
label style={font=\Large\color{white!15!black}},
tick label style={font=\Large},
axis background/.style={fill=white},
legend style={at={(0.03,0.03)}, anchor=south west, legend cell align=left, align=left, draw=white!15!black}, font = \large]
\addplot [color=mycolor1, line width=2.0pt, mark size=3.5pt, mark=square, mark options={solid, mycolor1}]
  table[row sep=crcr]{%
85	0.161030701651573\\
106	0.0328862106645502\\
164	0.00652394480608187\\
360	0.000558446724063799\\
1104	1.57037113179825e-05\\
4064	9.18157621043747e-08\\
16000	6.54097220831318e-10\\
64192	3.5473846082823e-12\\
};
\addlegendentry{p=4, k=3, $\mu$=0.1125, hierarchical}

\addplot [color=mycolor2, dotted, line width=2.0pt, mark size=3.3pt, mark=triangle, mark options={solid, mycolor2}]
  table[row sep=crcr]{%
85	0.161030701651573\\
106	0.0788466596309032\\
164	0.039329538106486\\
360	0.0196660965616751\\
1104	0.00983770795966166\\
4064	0.00492035565685001\\
16000	0.00246059550390854\\
64192	0.00123040748667513\\
};
\addlegendentry{p=4, k=3, $\mu$=1, hierarchical}

\addplot [color=mycolor3, line width=2.0pt, mark size=3.5pt, mark=square, mark options={solid, mycolor3}]
  table[row sep=crcr]{%
85	0.233719550518147\\
126	0.00964208886796136\\
232	0.0034025182553421\\
540	0.000666690004381465\\
1540	1.67618577862072e-05\\
5076	8.63119886673758e-08\\
18292	6.10988593052753e-10\\
69300	3.36264349698467e-12\\
};
\addlegendentry{p=4, k=3, $\mu$=0.1125, tensor}

\addplot [color=mycolor4, dotted, line width=2.0pt, mark size=3.3pt, mark=triangle, mark options={solid, mycolor4}]
  table[row sep=crcr]{%
85	0.233719550518147\\
126	0.120388686835186\\
232	0.0605152510429487\\
540	0.0303257672081472\\
1540	0.0151803114069047\\
5076	0.00759452995609067\\
18292	0.00379836031525116\\
69300	0.00189945420095938\\
};
\addlegendentry{p=4, k=3, $\mu$=1, tensor}

\logLogSlopeTriangle{0.59}{0.2}{0.79}{0.5}{gray, line width = 2pt}{1};
\logLogSlopeTriangle{0.4}{0.14}{0.41}{4}{gray, line width = 2pt}{8};

\end{axis}
\end{tikzpicture}%}
		\caption{Approximation errors for the eigenfunction $u=u_{\nu_1,1}$ and its corresponding eigenvalue $\lambda=\lambda_{\nu_1,1}$ using NURBS of degree $\bs p=(4,4)$ and regularity $\bs k= (3,3)$ on tensor product and hierarchical meshes. We set $\mu = 0.9 \frac{\nu_1}{4} = 0.1125$ for graded meshes and $\mu = 1$ corresponds to uniform refinement.}
		\label{fig: tensor vs hierachical mesh second eigenfunction}
	\end{figure}
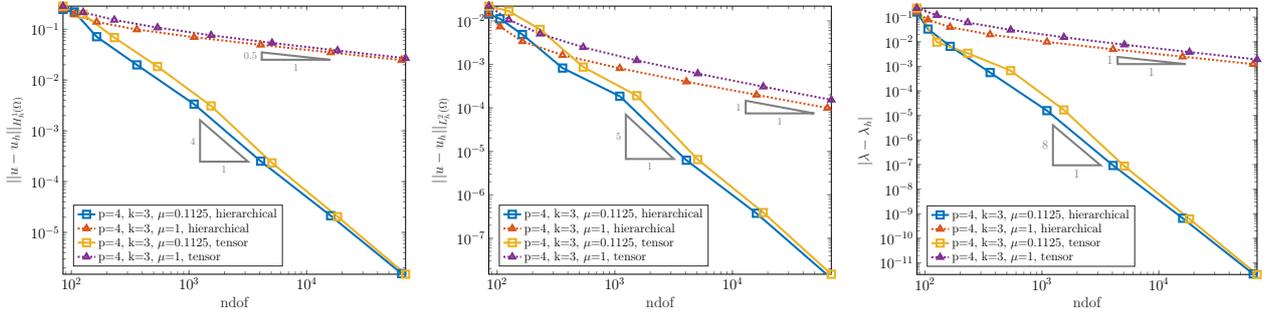
\end{example}	

\begin{example}[Eigenfunction of Type (B)]
	Finally, we repeat the experiment from Example \ref{ex: approximation of smooth eigenfunction}, but now using the hierarchical refinement scheme. We approximate the smooth eigenfunction $u_{\nu_2,1}$ and its corresponding eigenvalue $\lambda_{\nu_2,1}$ of the unit disk with crack on both uniform and graded hierarchical meshes. The approximation errors for NURBS of polynomial degree $\bs p =(p,p)$, where $p\in\{2,3,4\}$, and regularity $\bs k= (p-1,p-1)$ are illustrated in Figure \ref{fig: approximation errors for third eigenfunction hierarchical} and clearly differ from the ones obtained with tensor product refinement in Figure \ref{fig: approximation errors for third eigenfunction variational crime}. Independent from the NURBS degree, the convergence rates for the $H^1_h(\Omega)$- and $L^2_h(\Omega)$-eigenfunction error and the absolute eigenvalue error generated by uniform hierarchical refinement are $1$, $2$ and $2$, respectively, and thus not optimal. In contrast, graded hierarchical meshes do produce optimal convergence rates of $p$, $p+1$ and $2p$ for the $H^1_h(\Omega)$- and $L^2_h(\Omega)$-eigenfunction error and the absolute eigenvalue error, respectively. The different precisions obtained with hierarchical and tensor product meshes are demonstrated explicitly in Figure \ref{fig: tensor vs hierachical mesh third eigenfunction}, where NURBS of degree $\bs p = (4,4)$ and regularity $\bs k = (3,3)$ are refined both gradually and uniformly with grading parameters $\mu = 0.9 \cdot \frac{\nu_1}{4} = 0.1125$ and $\mu =1$, respectively. These results indicate that mesh grading restores optimal convergence of the hierarchically refined solutions. However, in this case, the grading is not required by the regularity of the approximated eigenfunction but by the hierarchical structure of the mesh, which is consistent with the theoretical findings in \cite{Takacs2023preprint}. This is because the size of the hierarchical mesh elements close to the singularity, as illustrated in Figure \ref{fig: elements close to center hierarchical}, does not decrease fast enough during uniform refinement. Yet, with sufficiently strong mesh grading towards the singularity, this issue is addressed.
	
	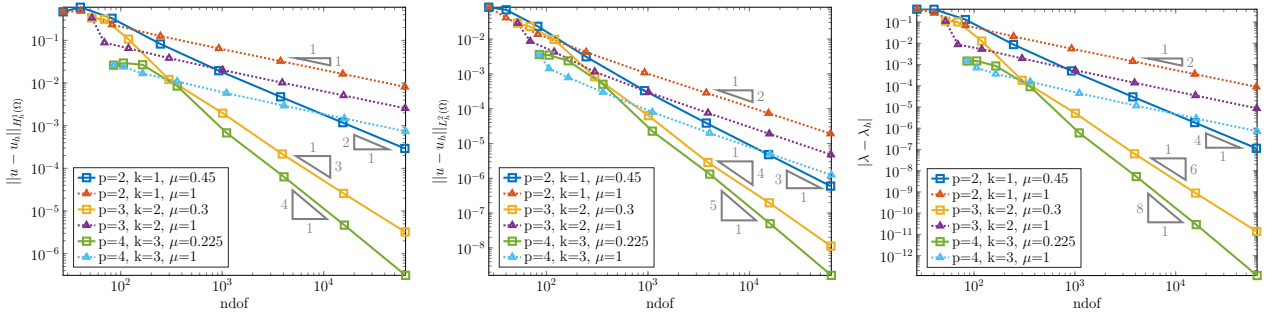
\begin{figure}[t]
		\resizebox{0.325\linewidth}{!}{% This file was created by matlab2tikz.
%
%The latest updates can be retrieved from
%  http://www.mathworks.com/matlabcentral/fileexchange/22022-matlab2tikz-matlab2tikz
%where you can also make suggestions and rate matlab2tikz.
%
\definecolor{mycolor1}{rgb}{0.00000,0.44700,0.74100}%
\definecolor{mycolor2}{rgb}{0.85000,0.32500,0.09800}%
\definecolor{mycolor3}{rgb}{0.92900,0.69400,0.12500}%
\definecolor{mycolor4}{rgb}{0.49400,0.18400,0.55600}%
\definecolor{mycolor5}{rgb}{0.46600,0.67400,0.18800}%
\definecolor{mycolor6}{rgb}{0.30100,0.74500,0.93300}%
\begin{tikzpicture}
\tikzstyle{every node}=[font=\Large]
\begin{axis}[%
width=4.568in,
height=3.603in,
at={(0.766in,0.486in)},
scale only axis,
xmode=log,
xmin=27,
xmax=64192,
xminorticks=true,
xlabel={ndof},
ymode=log,
ymin=3.10821127562824e-07,
ymax=0.599860404174659,
yminorticks=true,
ylabel={$||u-u_h||_{H^1_h(\Omega)}$},
axis background/.style={fill=white},
legend style={at={(0.03,0.03)}, anchor=south west, legend cell align=left, align=left, draw=white!15!black}
]
\addplot [color=mycolor1, line width=2.0pt, mark size=3.5pt, mark=square, mark options={solid, mycolor1}]
  table[row sep=crcr]{%
27	0.468273323255172\\
40	0.599860404174659\\
82	0.330380278409346\\
246	0.082362847532566\\
926	0.0195015654562329\\
3758	0.00476903899545639\\
15438	0.00117937632192133\\
63118	0.000292892375950811\\
};
\addlegendentry{p=2, k=1, $\mu$=0.45}

\addplot [color=mycolor2, dotted, line width=2.0pt, mark size=3.3pt, mark=triangle, mark options={solid, mycolor2}]
  table[row sep=crcr]{%
27	0.468273323255172\\
40	0.487644706128008\\
82	0.234521813096592\\
246	0.126475937928948\\
926	0.0648560322661512\\
3758	0.0326411356242137\\
15438	0.0163474848327489\\
63118	0.00817711115199149\\
};
\addlegendentry{p=2, k=1, $\mu$=1}

\addplot [color=mycolor3, line width=2.0pt, mark size=3.5pt, mark=square, mark options={solid, mycolor3}]
  table[row sep=crcr]{%
52	0.336248523585391\\
69	0.313079840292247\\
119	0.106993083278129\\
299	0.0121602074821732\\
1011	0.00196675237154749\\
3907	0.000217381000118804\\
15715	2.58651004809897e-05\\
63651	3.22799065856916e-06\\
};
\addlegendentry{p=3, k=2, $\mu$=0.3}

\addplot [color=mycolor4, dotted, line width=2.0pt, mark size=3.3pt, mark=triangle, mark options={solid, mycolor4}]
  table[row sep=crcr]{%
52	0.336248523585391\\
69	0.0886024869509845\\
119	0.0653558488730191\\
299	0.0385287523994931\\
1011	0.0201712208416168\\
3907	0.0102048136821523\\
15715	0.00511747484576287\\
63651	0.00256062625329559\\
};
\addlegendentry{p=3, k=2, $\mu$=1}

\addplot [color=mycolor5, line width=2.0pt, mark size=3.5pt, mark=square, mark options={solid, mycolor5}]
  table[row sep=crcr]{%
85	0.0263123255558246\\
106	0.0294778800424176\\
164	0.0269110160766774\\
360	0.00851094398645413\\
1104	0.00067993014446556\\
4064	6.33541231299188e-05\\
16000	4.6738199864072e-06\\
64192	3.10821127562824e-07\\
};
\addlegendentry{p=4, k=3, $\mu$=0.225}

\addplot [color=mycolor6, dotted, line width=2.0pt, mark size=3.3pt, mark=triangle, mark options={solid, mycolor6}]
  table[row sep=crcr]{%
85	0.0263123255558246\\
106	0.0239853457493315\\
164	0.0168095828025716\\
360	0.010896409371414\\
1104	0.00580471829247579\\
4064	0.00294887121713901\\
16000	0.00148033739720268\\
64192	0.000740908722456078\\
};
\addlegendentry{p=4, k=3, $\mu$=1}

\logLogSlopeTriangleUpsideDown{0.78}{0.1}{0.81}{0.5}{gray, line width = 2pt}{1};
\logLogSlopeTriangle{0.85}{0.1}{0.47}{1}{gray, line width = 2pt}{2};
\logLogSlopeTriangleUpsideDown{0.78}{0.1}{0.445}{1.5}{gray, line width = 2pt}{3};
\logLogSlopeTriangle{0.67}{0.1}{0.21}{2}{gray, line width = 2pt}{4};

\end{axis}
\end{tikzpicture}%}
		\hspace*{1mm}
		\resizebox{0.325\linewidth}{!}{% This file was created by matlab2tikz.
%
%The latest updates can be retrieved from
%  http://www.mathworks.com/matlabcentral/fileexchange/22022-matlab2tikz-matlab2tikz
%where you can also make suggestions and rate matlab2tikz.
%
\definecolor{mycolor1}{rgb}{0.00000,0.44700,0.74100}%
\definecolor{mycolor2}{rgb}{0.85000,0.32500,0.09800}%
\definecolor{mycolor3}{rgb}{0.92900,0.69400,0.12500}%
\definecolor{mycolor4}{rgb}{0.49400,0.18400,0.55600}%
\definecolor{mycolor5}{rgb}{0.46600,0.67400,0.18800}%
\definecolor{mycolor6}{rgb}{0.30100,0.74500,0.93300}%
\begin{tikzpicture}
\tikzstyle{every node}=[font=\Large]
\begin{axis}[%
width=4.568in,
height=3.603in,
at={(0.766in,0.486in)},
scale only axis,
xmode=log,
xmin=27,
xmax=64192,
xminorticks=true,
xlabel={ndof},
ymode=log,
ymin=1.62178360891727e-09,
ymax=0.0823165156232398,
yminorticks=true,
ylabel={$||u-u_h||_{L^2_h(\Omega)}$},
axis background/.style={fill=white},
legend style={at={(0.03,0.03)}, anchor=south west, legend cell align=left, align=left, draw=white!15!black}
]
\addplot [color=mycolor1, line width=2.0pt, mark size=3.5pt, mark=square, mark options={solid, mycolor1}]
  table[row sep=crcr]{%
27	0.0823165156232398\\
40	0.0694580355622535\\
82	0.0232924089568051\\
246	0.00309995799815063\\
926	0.000329063495469562\\
3758	3.85579023263391e-05\\
15438	4.77024528311006e-06\\
63118	5.95011619385012e-07\\
};
\addlegendentry{p=2, k=1, $\mu$=0.45}

\addplot [color=mycolor2, dotted, line width=2.0pt, mark size=3.3pt, mark=triangle, mark options={solid, mycolor2}]
  table[row sep=crcr]{%
27	0.0823165156232398\\
40	0.041100145638054\\
82	0.013762385049116\\
246	0.00420854274843953\\
926	0.00108497827701383\\
3758	0.000285172006724673\\
15438	7.40841651386948e-05\\
63118	1.91497102781948e-05\\
};
\addlegendentry{p=2, k=1, $\mu$=1}

\addplot [color=mycolor3, line width=2.0pt, mark size=3.5pt, mark=square, mark options={solid, mycolor3}]
  table[row sep=crcr]{%
52	0.0288077296478047\\
69	0.0227121902862282\\
119	0.00975262560517137\\
299	0.000826006311078931\\
1011	6.31195363133068e-05\\
3907	2.84574729153551e-06\\
15715	1.96693128474105e-07\\
63651	1.12176520164936e-08\\
};
\addlegendentry{p=3, k=2, $\mu$=0.3}

\addplot [color=mycolor4, dotted, line width=2.0pt, mark size=3.3pt, mark=triangle, mark options={solid, mycolor4}]
  table[row sep=crcr]{%
52	0.0288077296478047\\
69	0.00872618973610759\\
119	0.00428609031335559\\
299	0.0011490595589441\\
1011	0.000295934857322451\\
3907	7.53116828048415e-05\\
15715	1.89856274451204e-05\\
63651	4.77362135176686e-06\\
};
\addlegendentry{p=3, k=2, $\mu$=1}

\addplot [color=mycolor5, line width=2.0pt, mark size=3.5pt, mark=square, mark options={solid, mycolor5}]
  table[row sep=crcr]{%
85	0.00359414515353777\\
106	0.0034532443376467\\
164	0.00237218927695881\\
360	0.000502189708721946\\
1104	2.26214135997688e-05\\
4064	1.31266774626689e-06\\
16000	4.94982858739754e-08\\
64192	1.62178360891727e-09\\
};
\addlegendentry{p=4, k=3, $\mu$=0.225}

\addplot [color=mycolor6, dotted, line width=2.0pt, mark size=3.3pt, mark=triangle, mark options={solid, mycolor6}]
  table[row sep=crcr]{%
85	0.00359414515353777\\
106	0.00141856640143475\\
164	0.000783631291251699\\
360	0.000298244917939538\\
1104	7.95690619102752e-05\\
4064	1.98797486240661e-05\\
16000	4.99590520129593e-06\\
64192	1.2510729846586e-06\\
};
\addlegendentry{p=4, k=3, $\mu$=1}

\logLogSlopeTriangleUpsideDown{0.77}{0.1}{0.69}{1}{gray, line width = 2pt}{2};
\logLogSlopeTriangle{0.87}{0.1}{0.325}{1.5}{gray, line width = 2pt}{3};
\logLogSlopeTriangleUpsideDown{0.77}{0.1}{0.425}{2}{gray, line width = 2pt}{4};
\logLogSlopeTriangle{0.68}{0.1}{0.205}{2.5}{gray, line width = 2pt}{5};

\end{axis}
\end{tikzpicture}%}
		\hspace*{1mm}
		\resizebox{0.325\linewidth}{!}{% This file was created by matlab2tikz.
%
%The latest updates can be retrieved from
%  http://www.mathworks.com/matlabcentral/fileexchange/22022-matlab2tikz-matlab2tikz
%where you can also make suggestions and rate matlab2tikz.
%
\definecolor{mycolor1}{rgb}{0.00000,0.44700,0.74100}%
\definecolor{mycolor2}{rgb}{0.85000,0.32500,0.09800}%
\definecolor{mycolor3}{rgb}{0.92900,0.69400,0.12500}%
\definecolor{mycolor4}{rgb}{0.49400,0.18400,0.55600}%
\definecolor{mycolor5}{rgb}{0.46600,0.67400,0.18800}%
\definecolor{mycolor6}{rgb}{0.30100,0.74500,0.93300}%
\begin{tikzpicture}
\tikzstyle{every node}=[font=\Large]

\begin{axis}[%
width=4.568in,
height=3.603in,
at={(0.766in,0.486in)},
scale only axis,
xmode=log,
xmin=27,
xmax=64192,
xminorticks=true,
xlabel={ndof},
ymode=log,
ymin=1.15463194561016e-13,
ymax=0.404275209235804,
yminorticks=true,
ylabel={$|\lambda-\lambda_h|$},
axis background/.style={fill=white},
legend style={at={(0.03,0.03)}, anchor=south west, legend cell align=left, align=left, draw=white!15!black}
]
\addplot [color=mycolor1, line width=2.0pt, mark size=3.5pt, mark=square, mark options={solid, mycolor1}]
  table[row sep=crcr]{%
27	0.404275209235804\\
40	0.400981538212474\\
82	0.131652041879288\\
246	0.00874214272949203\\
926	0.000501752116099041\\
3758	3.0168010090037e-05\\
15438	1.84738112807281e-06\\
63118	1.13973927184929e-07\\
};
\addlegendentry{p=2, k=1, $\mu$=0.45}

\addplot [color=mycolor2, dotted, line width=2.0pt, mark size=3.3pt, mark=triangle, mark options={solid, mycolor2}]
  table[row sep=crcr]{%
27	0.404275209235804\\
40	0.266264303243268\\
82	0.0692477261508877\\
246	0.0210138065775851\\
926	0.00559056292718729\\
3758	0.00142101447062259\\
15438	0.000356766288705401\\
63118	8.92878929690255e-05\\
};
\addlegendentry{p=2, k=1, $\mu$=1}

\addplot [color=mycolor3, line width=2.0pt, mark size=3.5pt, mark=square, mark options={solid, mycolor3}]
  table[row sep=crcr]{%
52	0.110879788798819\\
69	0.101136434305769\\
119	0.0128208767300872\\
299	0.000182077341795761\\
1011	5.06236390407366e-06\\
3907	6.26429130790029e-08\\
15715	8.88110918140228e-10\\
63651	1.38449252062856e-11\\
};
\addlegendentry{p=3, k=2, $\mu$=0.3}

\addplot [color=mycolor4, dotted, line width=2.0pt, mark size=3.3pt, mark=triangle, mark options={solid, mycolor4}]
  table[row sep=crcr]{%
52	0.110879788798819\\
69	0.00878770481477531\\
119	0.00531028879345463\\
299	0.00194858874272796\\
1011	0.000539830705514888\\
3907	0.000138532225800958\\
15715	3.48612825380457e-05\\
63651	8.72967084930565e-06\\
};
\addlegendentry{p=3, k=2, $\mu$=1}

\addplot [color=mycolor5, line width=2.0pt, mark size=3.5pt, mark=square, mark options={solid, mycolor5}]
  table[row sep=crcr]{%
85	0.00146607272176524\\
106	0.00147802543548892\\
164	0.000858591735269698\\
360	9.07837522081678e-05\\
1104	6.02960509965556e-07\\
4064	5.29626120737703e-09\\
16000	2.88906676360057e-11\\
64192	1.15463194561016e-13\\
};
\addlegendentry{p=4, k=3, $\mu$=0.225}

\addplot [color=mycolor6, dotted, line width=2.0pt, mark size=3.3pt, mark=triangle, mark options={solid, mycolor6}]
  table[row sep=crcr]{%
85	0.00146607272176524\\
106	0.000709876535388787\\
164	0.000361928295005143\\
360	0.000156018920586476\\
1104	4.46803379166028e-05\\
4064	1.15572557461974e-05\\
16000	2.91409962471789e-06\\
64192	7.30083462130438e-07\\
};
\addlegendentry{p=4, k=3, $\mu$=1}

\logLogSlopeTriangleUpsideDown{0.78}{0.1}{0.815}{1}{gray, line width = 2pt}{2};
\logLogSlopeTriangle{0.85}{0.1}{0.48}{2}{gray, line width = 2pt}{4};
\logLogSlopeTriangleUpsideDown{0.79}{0.1}{0.44}{3}{gray, line width = 2pt}{6};
\logLogSlopeTriangle{0.68}{0.1}{0.2}{4}{gray, line width = 2pt}{8};

\end{axis}
\end{tikzpicture}%}
		
		\caption{Approximation errors for the eigenfunction $u=u_{\nu_2,1}$ and its corresponding eigenvalue $\lambda=\lambda_{\nu_2,1}$ using gradually and uniformly refined hierarchical NURBS of degree $\bs p =(p,p)$, $p\in \{2,3,4\}$, and regularity $\bs k = (p-1,p-1)$. We set $\mu = 0.9 \frac{\nu_2}{p} = \frac{0.9}{p}$ for graded meshes and $\mu = 1$ corresponds to uniform refinement.}
		\label{fig: approximation errors for }
		\label{fig: approximation errors for third eigenfunction hierarchical}
	\end{figure}
	
	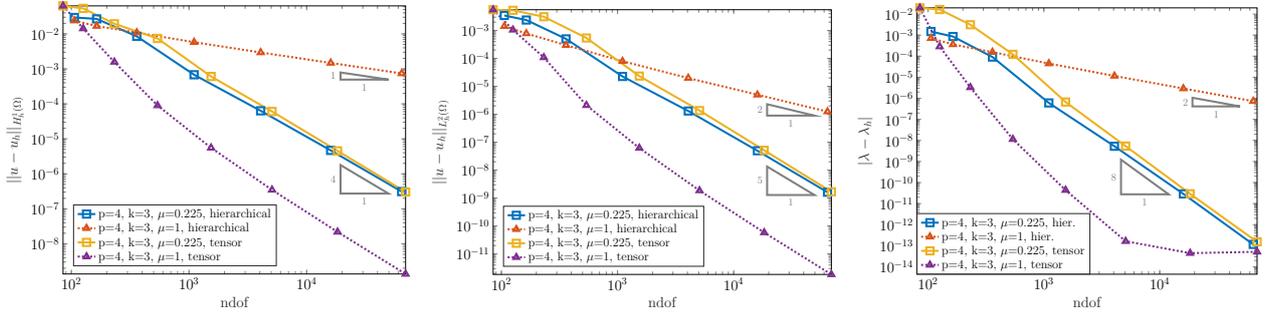
\begin{figure}[t]
		\resizebox{0.325\linewidth}{!}{% This file was created by matlab2tikz.
%
%The latest updates can be retrieved from
%  http://www.mathworks.com/matlabcentral/fileexchange/22022-matlab2tikz-matlab2tikz
%where you can also make suggestions and rate matlab2tikz.
%
\definecolor{mycolor1}{rgb}{0.00000,0.44700,0.74100}%
\definecolor{mycolor2}{rgb}{0.85000,0.32500,0.09800}%
\definecolor{mycolor3}{rgb}{0.92900,0.69400,0.12500}%
\definecolor{mycolor4}{rgb}{0.49400,0.18400,0.55600}%
\begin{tikzpicture}
\begin{axis}[%
width=4.568in,
height=3.603in,
at={(0.766in,0.486in)},
scale only axis,
xmode=log,
xmin=85,
xmax=69300,
xminorticks=true,
xlabel={ndof},
ymode=log,
ymin=1.36869504869903e-09,
ymax=0.0633823339081802,
yminorticks=true,
ylabel={$||u-u_h||_{H^1_h(\Omega)}$},
label style={font=\Large\color{white!15!black}},
tick label style={font=\Large},
axis background/.style={fill=white},
legend style={at={(0.03,0.03)}, anchor=south west, legend cell align=left, align=left, draw=white!15!black, font=\large}
]
\addplot [color=mycolor1, line width=2.0pt, mark size=3.5pt, mark=square, mark options={solid, mycolor1}]
  table[row sep=crcr]{%
106	0.0294778800424176\\
164	0.0269110160766774\\
360	0.00851094398645413\\
1104	0.00067993014446556\\
4064	6.33541231299188e-05\\
16000	4.6738199864072e-06\\
64192	3.10821127562824e-07\\
};
\addlegendentry{p=4, k=3, $\mu$=0.225, hierarchical}

\addplot [color=mycolor2, dotted, line width=2.0pt, mark size=3.3pt, mark=triangle, mark options={solid, mycolor2}]
  table[row sep=crcr]{%
106	0.0239853457493315\\
164	0.0168095828025716\\
360	0.010896409371414\\
1104	0.00580471829247579\\
4064	0.00294887121713901\\
16000	0.00148033739720268\\
64192	0.000740908722456078\\
};
\addlegendentry{p=4, k=3, $\mu$=1, hierarchical}

\addplot [color=mycolor3, line width=2.0pt, mark size=3.5pt, mark=square, mark options={solid, mycolor3}]
  table[row sep=crcr]{%
85	0.0633823339081802\\
126	0.0537437639049071\\
232	0.0195278652947162\\
540	0.00742491663452471\\
1540	0.000606562999893627\\
5076	6.05563669971165e-05\\
18292	4.52977325308729e-06\\
69300	3.02423322081481e-07\\
};
\addlegendentry{p=4, k=3, $\mu$=0.225, tensor}

\addplot [color=mycolor4, dotted, line width=2.0pt, mark size=3.3pt, mark=triangle, mark options={solid, mycolor4}]
  table[row sep=crcr]{%
85	0.0633823339081802\\
126	0.0142728705614051\\
232	0.00157750394781962\\
540	8.99425529648439e-05\\
1540	5.53625226893008e-06\\
5076	3.47308617810386e-07\\
18292	2.18216891122536e-08\\
69300	1.36869504869903e-09\\
};
\addlegendentry{p=4, k=3, $\mu$=1, tensor}

\logLogSlopeTriangle{0.81}{0.14}{0.725}{0.5}{gray, line width = 2pt}{1};
\logLogSlopeTriangle{0.81}{0.14}{0.3}{2}{gray, line width = 2pt}{4};

\end{axis}
\end{tikzpicture}%}
		\hspace*{1mm}
		\resizebox{0.325\linewidth}{!}{% This file was created by matlab2tikz.
%
%The latest updates can be retrieved from
%  http://www.mathworks.com/matlabcentral/fileexchange/22022-matlab2tikz-matlab2tikz
%where you can also make suggestions and rate matlab2tikz.
%
\definecolor{mycolor1}{rgb}{0.00000,0.44700,0.74100}%
\definecolor{mycolor2}{rgb}{0.85000,0.32500,0.09800}%
\definecolor{mycolor3}{rgb}{0.92900,0.69400,0.12500}%
\definecolor{mycolor4}{rgb}{0.49400,0.18400,0.55600}%
\begin{tikzpicture}
\begin{axis}[%
width=4.568in,
height=3.603in,
at={(0.766in,0.486in)},
scale only axis,
xmode=log,
xmin=85,
xmax=69300,
xminorticks=true,
xlabel={ndof},
ymode=log,
ymin=1.8497194333741e-12,
ymax=0.00555264680203518,
yminorticks=true,
label style={font=\Large\color{white!15!black}},
tick label style={font=\Large},
ylabel={$||u-u_h||_{L^2_h(\Omega)}$},
axis background/.style={fill=white},
legend style={at={(0.03,0.03)}, anchor=south west, legend cell align=left, align=left, draw=white!15!black,font=\large}
]
\addplot [color=mycolor1, line width=2.0pt, mark size=3.5pt, mark=square, mark options={solid, mycolor1}]
  table[row sep=crcr]{%
106	0.0034532443376467\\
164	0.00237218927695881\\
360	0.000502189708721946\\
1104	2.26214135997688e-05\\
4064	1.31266774626689e-06\\
16000	4.94982858739754e-08\\
64192	1.62178360891727e-09\\
};
\addlegendentry{p=4, k=3, $\mu$=0.225, hierarchical}

\addplot [color=mycolor2, dotted, line width=2.0pt, mark size=3.3pt, mark=triangle, mark options={solid, mycolor2}]
  table[row sep=crcr]{%
106	0.00141856640143475\\
164	0.000783631291251699\\
360	0.000298244917939538\\
1104	7.95690619102752e-05\\
4064	1.98797486240661e-05\\
16000	4.99590520129593e-06\\
64192	1.2510729846586e-06\\
};
\addlegendentry{p=4, k=3, $\mu$=1, hierarchical}

\addplot [color=mycolor3, line width=2.0pt, mark size=3.5pt, mark=square, mark options={solid, mycolor3}]
  table[row sep=crcr]{%
85	0.00555264680203518\\
126	0.00528779477460469\\
232	0.00311185819807807\\
540	0.000539547018733666\\
1540	2.33542884488691e-05\\
5076	1.36289672675293e-06\\
18292	5.14210759118573e-08\\
69300	1.68794180593417e-09\\
};
\addlegendentry{p=4, k=3, $\mu$=0.225, tensor}

\addplot [color=mycolor4, dotted, line width=2.0pt, mark size=3.3pt, mark=triangle, mark options={solid, mycolor4}]
  table[row sep=crcr]{%
85	0.00555264680203518\\
126	0.00105234918596993\\
232	0.000108533149651327\\
540	2.09919052100588e-06\\
1540	6.21250300758878e-08\\
5076	1.86633239841443e-09\\
18292	5.84655691974127e-11\\
69300	1.8497194333741e-12\\
};
\addlegendentry{p=4, k=3, $\mu$=1, tensor}

\logLogSlopeTriangle{0.81}{0.14}{0.6}{1}{gray, line width = 2pt}{2};
\logLogSlopeTriangle{0.81}{0.14}{0.3}{2.5}{gray, line width = 2pt}{5};

\end{axis}
\end{tikzpicture}%}
		\hspace*{1mm}
		\resizebox{0.325\linewidth}{!}{% This file was created by matlab2tikz.
%
%The latest updates can be retrieved from
%  http://www.mathworks.com/matlabcentral/fileexchange/22022-matlab2tikz-matlab2tikz
%where you can also make suggestions and rate matlab2tikz.
%
\definecolor{mycolor1}{rgb}{0.00000,0.44700,0.74100}%
\definecolor{mycolor2}{rgb}{0.85000,0.32500,0.09800}%
\definecolor{mycolor3}{rgb}{0.92900,0.69400,0.12500}%
\definecolor{mycolor4}{rgb}{0.49400,0.18400,0.55600}%
\begin{tikzpicture}

\begin{axis}[%
width=4.568in,
height=3.603in,
at={(0.766in,0.486in)},
scale only axis,
xmode=log,
xmin=80,
xmax=69300,
xminorticks=true,
xlabel={ndof},
ymode=log,
ymin=4.44089209850063e-15,
ymax=0.0197036887358539,
yminorticks=true,
ylabel={$|\lambda-\lambda_h|$},
label style={font=\Large\color{white!15!black}},
tick label style={font=\Large},
axis background/.style={fill=white},
legend style={at={(0,0)}, anchor=south west, legend cell align=left, align=left, draw=white!15!black,font = \large}
]
\addplot [color=mycolor1, line width=2.0pt, mark size=3.5pt, mark=square, mark options={solid, mycolor1}]
  table[row sep=crcr]{%
106	0.00147802543548892\\
164	0.000858591735269698\\
360	9.07837522081678e-05\\
1104	6.02960509965556e-07\\
4064	5.29626120737703e-09\\
16000	2.88906676360057e-11\\
64192	1.15463194561016e-13\\
};
\addlegendentry{p=4, k=3, $\mu$=0.225, hier.}

\addplot [color=mycolor2, dotted, line width=2.0pt, mark size=3.3pt, mark=triangle, mark options={solid, mycolor2}]
  table[row sep=crcr]{%
106	0.000709876535388787\\
164	0.000361928295005143\\
360	0.000156018920586476\\
1104	4.46803379166028e-05\\
4064	1.15572557461974e-05\\
16000	2.91409962471789e-06\\
64192	7.30083462130438e-07\\
};
\addlegendentry{p=4, k=3, $\mu$=1, hier.}

\addplot [color=mycolor3, line width=2.0pt, mark size=3.5pt, mark=square, mark options={solid, mycolor3}]
  table[row sep=crcr]{%
85	0.0197036887358539\\
126	0.0164096667762159\\
232	0.00311682058013218\\
540	0.000122334096065657\\
1540	6.6684811983464e-07\\
5076	5.37607469652812e-09\\
18292	2.91517920913975e-11\\
69300	1.52766688188422e-13\\
};
\addlegendentry{p=4, k=3, $\mu$=0.225, tensor}

\addplot [color=mycolor4, dotted, line width=2.0pt, mark size=3.3pt, mark=triangle, mark options={solid, mycolor4}]
  table[row sep=crcr]{%
85	0.0197036887358539\\
126	0.000287161251179668\\
232	3.27961671686694e-06\\
540	1.14351266233825e-08\\
1540	4.3648640257743e-11\\
5076	1.63424829224823e-13\\
18292	4.44089209850063e-14\\
69300	5.15143483426073e-14\\
};
\addlegendentry{p=4, k=3, $\mu$=1, tensor}

\logLogSlopeTriangle{0.81}{0.14}{0.63}{1}{gray, line width = 2pt}{2};
\logLogSlopeTriangle{0.6}{0.14}{0.3}{4}{gray, line width = 2pt}{8};

\end{axis}
\end{tikzpicture}%}
		
		\caption{Approximation errors for the eigenfunction $u=u_{\nu_2,1}$ and its corresponding eigenvalue $\lambda=\lambda_{\nu_2,1}$ using NURBS of degree $\bs p=(4,4)$ and regularity $\bs k= (3,3)$ on tensor product and hierarchical meshes. We set $\mu = 0.9 \frac{\nu_2}{4} = 0.225$ for graded meshes and $\mu = 1$ corresponds to uniform refinement.}
		\label{fig: tensor vs hierachical mesh third eigenfunction}
	\end{figure}
	
\end{example}

In summary, the combination of hierarchical mesh structure and graded refinement is very promising for approximating the Laplace eigenvalues of circular sectors and their corresponding eigenfunctions of both high and low regularity. However, the implementation and mathematical analysis are more difficult. Our numerical results can be a starting point for further investigations, especially in context with the theoretical explanations in \cite{Takacs2023preprint}.

\section{Conclusion \& Outlook }
\label{section: Conclusion}
In this paper, we studied the Laplace eigenvalue problem on circular sectors both analytically and numerically. We verified crucial regularity properties of the exact eigenfunctions and developed an effective numerical method to address the occurring singularities. In particular, we introduced a single-patch graded mesh refinement algorithm for isogeometric analysis on circular sectors, enabling local refinement towards the conical point, where some of the eigenfunctions show singular behavior. Since the polar-like isogeometric parameterization of circular sectors is also singular at the conical point, the method comes with a variational crime, but the numerical results indicate its robustness. We demonstrated optimal convergence rates for the eigenvalues and eigenfunctions numerically. Furthermore, we illustrated that smooth splines have a better approximation constant than their $C^0$-continuous counterparts on both uniform and graded meshes, at least for the lower part of the Laplace spectrum. Hence, we were able to extend some of the excellent spectral approximation properties of smooth splines, which have been discovered in the literature mainly for domains of rectangular nature, to circular sectors. In addition, we demonstrated the power of smooth splines on graded meshes for an accurate approximation of the $100$ smallest eigenvalues. Lastly, we considered a hierarchical refinement approach to avoid anisotropic elements in the physical B\'ezier mesh. We showed the efficiency of graded hierarchical meshes to simulate eigenfunctions with and without singularities and compared them with the previously used tensor product meshes. Here, we observed two advantages of the hierarchical scheme: redundant degrees of freedom are omitted and the number of basis functions contributing to the variational crime is held constant.

This contribution serves as motivation for various directions of further research. In general, we are lacking theoretical error estimates for isogeometric analysis on singularly parameterized circular sectors, both for tensor product and hierarchical meshes. We aim to address this gap in future work and, specifically, prove optimal approximation of non-smooth functions using our graded refinement algorithm. In addition, the presented method is extendable to other eigenvalue and boundary value problems on arbitrary two-dimensional single-patch domains with corner singularities. Here, our results serve as a proof of concept since the local neighborhood of a singular point can always be described by a circular sector. More work remains to be done for an extension of the method to multi-patch IGA and three-dimensional bodies with corner and edge singularities. Besides, it is well-established in finite element analysis that the $hp$-method is effective for handling singularities. Therefore, another idea for future research is to explore the $p$- and $k$-versions of graded refinement or to consider graded multi-degree polar splines \cite{ToshniwalSpeleersHiemstraHughes2017}. Finally, our findings provide a basis for future work on spectral approximation properties of smooth splines on circular sectors and, more generally, of singularly parameterized domains. Particularly, the appearance of spectral branches and the approximation constant in the upper part of the discrete spectrum remain to be investigated.

%%% The Appendices part is started with the command \appendix;
%% appendix sections are then done as normal sections
 \appendix
 
\section{Explicit representation of the single-patch parameterization}
 \label{sec: appendix explicit representation}
 To gain a better understanding, we derive an explicit representation of the single-patch parameterization of the unit disk with crack that is used throughout this paper. A similar computation can be found in the literature \cite{JeongOhKangKim2013,OhKimJeong2014}. By evaluating the univariate knot vector $\Xi^0_{1} = \left\{0,0,1,1\right\}$, we obtain two linear B-spline basis functions,
 \begin{alignat*}{2}
 &\widehat{B}_{1,1}(\zeta_1) = (1- \zeta_1) \quad &&\text{ for all } \zeta_1\in[0,1] \, ,\\
 &\widehat{B}_{2,1}(\zeta_1) = \zeta_1 \quad &&\text{ for all } \zeta_1\in[0,1] \, .
 \end{alignat*}
 The knot vector $\Xi^0_2 = \left\{0,0, 0,\frac14,\frac14, \frac12, \frac12, \frac34, \frac34,1,1,1 \right\}$ defines nine piecewise quadratic B-spline functions,
 \allowdisplaybreaks
 \begin{align*}
 &\widehat{B}_{1,2}(\zeta_2) = \begin{cases}
 (1-4 \zeta_2)^2 \quad &\text{ if } \zeta_2 \in[0,\frac14], \\ 
 0 \quad &\text{ if } \zeta_2 \notin [0,\frac14],
 \end{cases}
 &&\widehat{B}_{2,2}(\zeta_2) = \begin{cases}
 8 \zeta_2 (1-4 \zeta_2) \quad &\text{ if } \zeta_2 \in[0,\frac14], \\ 
 0 \quad &\text{ if } \zeta_2 \notin [0,\frac14],
 \end{cases} \\
 &\widehat{B}_{3,2}(\zeta_2) = \begin{cases}
 (4 \zeta_2)^2 \quad &\text{ if } \zeta_2 \in[0,\frac14], \\ 
 (2-4\zeta_2)^2 \quad &\text{ if } \zeta_2 \in [\frac14, \frac12], \\
 0  &\text{ if } \zeta_2 \notin [0,\frac12],
 \end{cases}
 &&\widehat{B}_{4,2}(\zeta_2) = \begin{cases}
 2(4\zeta_2-1)(2-4\zeta_2) \quad &\text{ if } \zeta_2 \in[\frac14,\frac12], \\ 
 0 \quad &\text{ if } \zeta_2 \notin [\frac14, \frac12],\\
 \end{cases} \\
 &\widehat{B}_{5,2}(\zeta_2) = \begin{cases}
 (4 \zeta_2-1)^2 \quad &\text{ if } \zeta_2 \in[\frac14,\frac12], \\ 
 (3-4\zeta_2)^2 \quad &\text{ if } \zeta_2 \in [\frac12, \frac34], \\
 0  &\text{ if } \zeta_2 \notin [\frac14,\frac34],
 \end{cases}
 &&\widehat{B}_{6,2}(\zeta_2) = \begin{cases}
 2(4\zeta_2-2)(3-4\zeta_2) \quad &\text{ if } \zeta_2 \in[\frac12,\frac34], \\ 
 0 \quad &\text{ if } \zeta_2 \notin [\frac12,\frac34], \\
 \end{cases} \\
 &\widehat{B}_{7,2}(\zeta_2) = \begin{cases}
 (4 \zeta_2-2)^2 \quad &\text{ if } \zeta_2 \in[\frac12,\frac34], \\ 
 (4-4\zeta_2)^2 \quad &\text{ if } \zeta_2 \in [\frac34, 1], \\
 0  &\text{ if } \zeta_2 \notin [\frac12,1],
 \end{cases}
 &&\widehat{B}_{8,2}(\zeta_2) = \begin{cases}
 8(4\zeta_2-3)(1-\zeta_2) \quad &\text{ if } \zeta_2 \in[\frac34,1], \\ 
 0 \quad &\text{ if } \zeta_2 \notin [\frac34,1], \\
 \end{cases} \\
 &\widehat{B}_{9,2}(\zeta_2) = \begin{cases}
 (4\zeta_2-3)^2 \quad &\text{ if } \zeta_2 \in[\frac34,1], \\ 
 0 \quad &\text{ if } \zeta_2 \notin [\frac34,1].
 \end{cases}
 \end{align*}
 Note that the functions $\widehat{B}_{i_2,2}$, $i_2=1,2,\dots,9$, are smooth in $[0,1] \setminus \left \{\frac14, \frac24, \frac34 \right\}$. In the remaining three points, the functions are only $C^0$-continuous due to the repeated knot values, as discussed in Section \ref{subsec: univariate splines}. The tensor product B-spline basis functions are given by
 \begin{align*}
 \widehat{B}_{\bs i, (1,2)}(\bs \zeta) = \widehat{B}_{i_1,1}(\zeta_1) \, \widehat{B}_{i_2,2}(\zeta_2) , \quad  \bs i \in \bs I_0\, ,
 \end{align*}
 and the final NURBS basis functions are then defined by
 \begin{align*}
 \widehat{N}_{\bs i, (1,2)}(\bs \zeta) = \frac{w_{\bs i} \, \widehat{B}_{\bs i, (1,2)}(\bs \zeta)}{W(\bs \zeta)} , \quad  \bs i  \in \bs I_0 \, ,
 \end{align*}
 with $W$ from \eqref{eq: weight function} and $\bs I_0 = \{ \bs i = (i_1,i_2): 1 \leq i_1 \leq 2, 1\leq i_2 \leq 9\}$. By inserting the values $w_{\bs i}$ from Table \ref{tab: Control points and weights for unit disk with crack}, where $w_{1,i_2}=w_{2,i_2}$ for $i_2=1,\dots,9$, and exploiting the partition of unity property of B-spline basis functions, the weight function can be determined in more detail, 
 \begin{align*}
 W(\bs \zeta) &= \sum_{i_1=1}^{2} \widehat{B}_{i_1,1}(\zeta_1) \sum_{i_2=2}^9 \widehat{B}_{i_2,2}(\zeta_2) \, w_{(2,i_2)} \\
 & =   \left(\widehat{B}_{1,2}(\zeta_2) + \widehat{B}_{3,2}(\zeta_2) + \widehat{B}_{5,2}(\zeta_2) + \widehat{B}_{7,2}(\zeta_2) + \widehat{B}_{9,2}(\zeta_2)\right) \, + \\ 
 & \quad \  \left(\widehat{B}_{2,2}(\zeta_2) + \widehat{B}_{4,2}(\zeta_2) + \widehat{B}_{6,2}(\zeta_2) + \widehat{B}_{8,2}(\zeta_2)\right) \cdot \frac{1}{\sqrt{2}} \\
 & \equiv w(\zeta_2) \, ,
 \end{align*}
 which is a function depending only on $\zeta_2$. Moreover, we note from Table \ref{tab: Control points and weights for unit disk with crack} that $\bs c_{(1,i_2)} = (0,0)$ for $i_2=1, \dots, 9$ and use the definition of $\widehat{B}_{2,1}$ to express the resulting NURBS parameterization $\bs F \colon \widehat{\Omega} \to \Omega$ in a compact way,	
 \begin{align}
 \bs F(\bs \zeta) &= \sum_{\bs i \in \bs I} \widehat{N}_{\bs i, (1,2)}(\bs \zeta) \, \bs c_{\bs i} \nonumber \\ 
 &= \sum_{i_1=1}^2 \sum_{i_2=1}^9 \bs c_{(i_1,i_2)} \, \frac{w_{(i_1,i_2)} \, \widehat{B}_{i_1,1}(\zeta_1) \, \widehat{B}_{i_2,2}(\zeta_2)}{w(\zeta_2)} \nonumber \\
 &= \sum_{i_2=1}^9 \bs c_{(2,i_2)} \, \frac{w_{(2,i_2)} \, \widehat{B}_{2,1}(\zeta_1) \, \widehat{B}_{i_2,2}(\zeta_2)}{w(\zeta_2)}  \nonumber \\
 &= \zeta_1 \sum_{i_2=1}^9 \bs c_{(2,i_2)} \, \frac{w_{(2,i_2)} \, \widehat{B}_{i_2,2}(\zeta_2)}{w(\zeta_2)} \nonumber\\
 &=: \zeta_1 \left(g_1(\zeta_2), g_2(\zeta_2) \right) \, ,
 \label{eq: Tensor-product form of F}
 \end{align}
 with functions $g_1, g_2 \colon [0,1] \to \R$ depending only on $\zeta_2$. The latter can be further simplified as some of the control points $\bs c_{(2,i_2)}$, $i_2=1,2, \dots, 9$, have zero entries,
 \begin{align*}
 g_1(\zeta_2) &= \frac{1}{w(\zeta_2)} \left(\widehat{B}_{1,2} + \frac{\widehat{B}_{2,2}}{\sqrt{2}} - \frac{\widehat{B}_{4,2}}{\sqrt{2}} - \widehat{B}_{5,2} - \frac{\widehat{B}_{6,2}}{\sqrt{2}} + \frac{\widehat{B}_{8,2}}{\sqrt{2}} + \widehat{B}_{9,2}\right) (\zeta_2)\, , \\
 g_2(\zeta_2) &= \frac{1}{w(\zeta_2)} \left(\frac{\widehat{B}_{2,2}}{\sqrt{2}} + \widehat{B}_{3,2} + \frac{\widehat{B}_{4,2}}{\sqrt{2}} - \frac{\widehat{B}_{6,2}}{\sqrt{2}} - \widehat{B}_{7,2} - \frac{\widehat{B}_{8,2}}{\sqrt{2}} \right) (\zeta_2) \, .
 \end{align*}
 Moreover, it can be shown that the weight function is bounded away from zero \cite{JeongOhKangKim2013,OhKimJeong2014},
 \begin{align*}
 \frac{2+\sqrt{2}}{4} \leq W(\zeta_1, \zeta_2) \equiv w(\zeta_2) \leq 1 \, .
 \end{align*}

\section{Comparison with polar coordinates}
 \label{sec: appendix comparison}
 
 \begin{figure}
 	\begin{subfigure}{0.495 \linewidth}
 		\includegraphics[width=\linewidth, trim=2.2cm 0.7cm 2.2cm 1.4cm, clip]{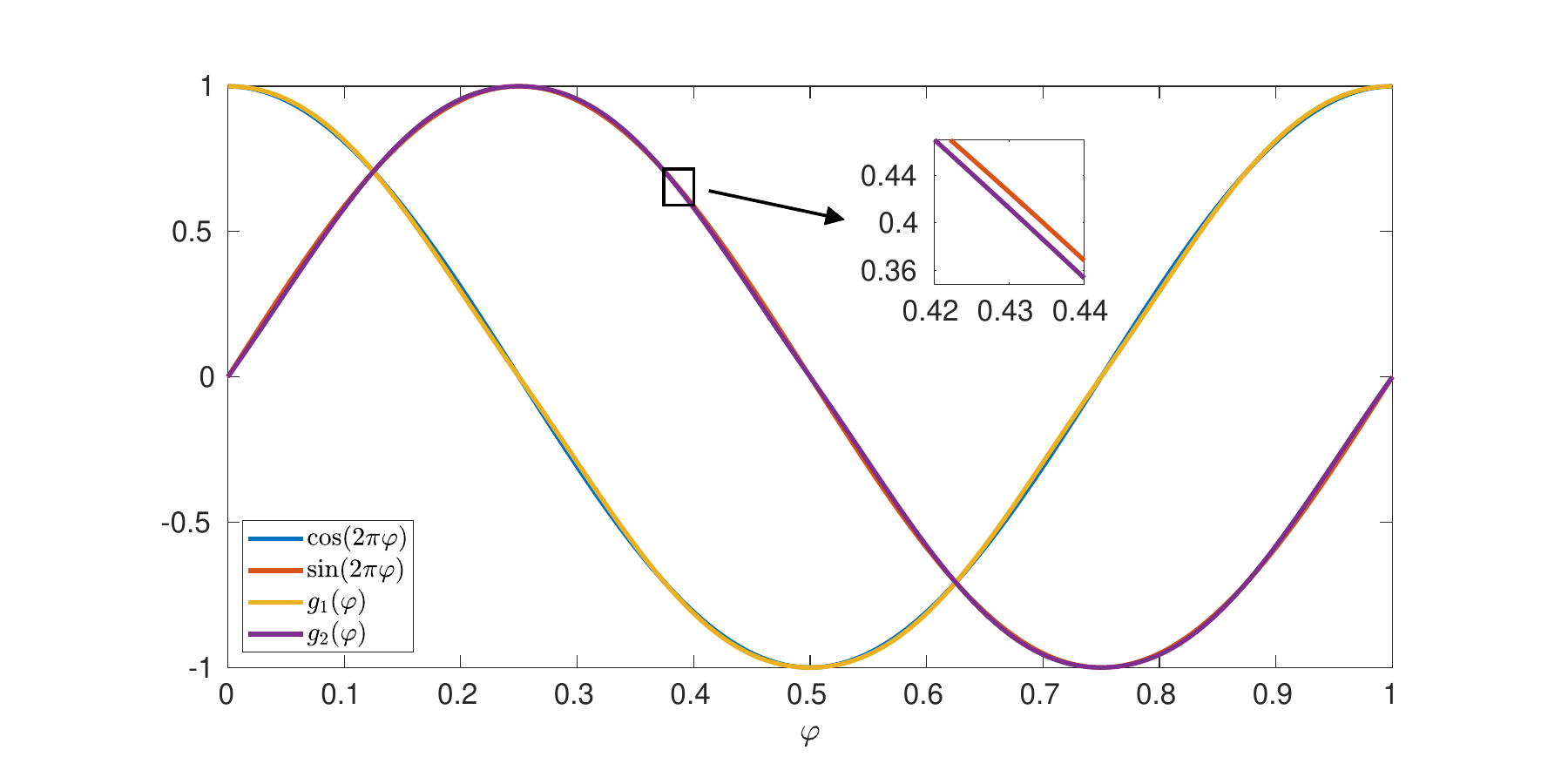}
 		\caption{}
 		\label{fig:parmeterizationpolar a}
 	\end{subfigure}
 	\hfill
 	\begin{subfigure}{0.495 \linewidth}
 		\includegraphics[width=\linewidth, trim=2.2cm 0.7cm 2.2cm 1.4cm, clip]{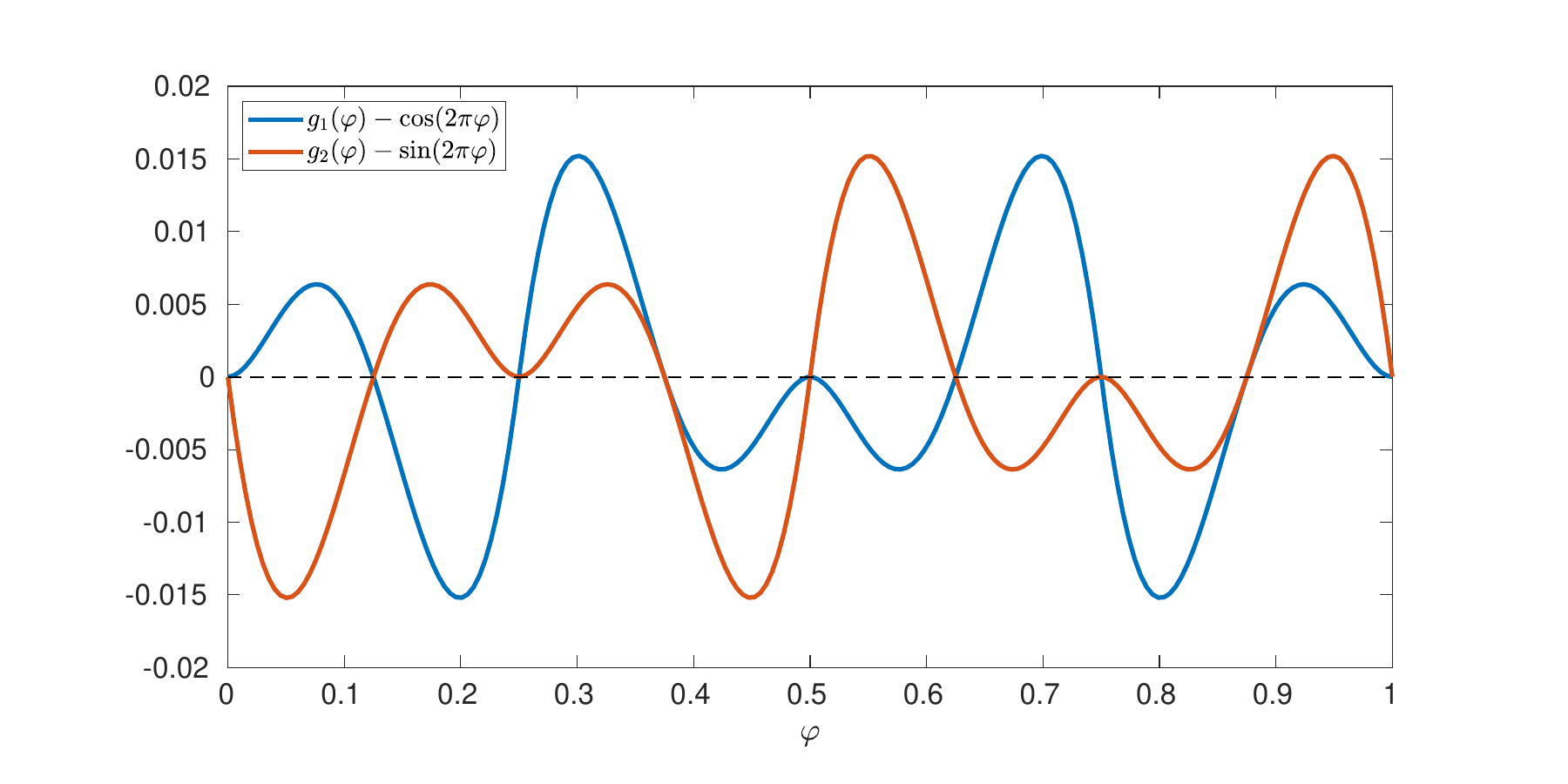}
 		\caption{}
 		\label{fig:parmeterizationpolar b}
 	\end{subfigure}
 	\vspace{-2mm}
 	\caption{(a): Isogeometric mappings $g_1$ and $g_2$ versus scaled polar angular mappings. (b): Difference between the corresponding functions.}
 	\label{fig:parmeterizationpolar}
 \end{figure}	 
 
 As the geometry mapping $\bs F$ resembles the classic transformation from polar to Cartesian coordinates, 
 \begin{align*}
 \bs{\bs{\mathring{F}}} \colon (0,1) \times (0,2 \pi) \to \Omega , \, (r, \varphi) \mapsto (r \cos \varphi, r \sin \varphi) \,,
 \end{align*}
 we point out some properties to compare the two parameterizations. First, let $(r, \varphi) \in (0,1) \times (0,2 \pi)$ and $(\zeta_1, \zeta_2) \in (0,1) \times (0,1)$ such that $\bs{\bs{\mathring{F}}}(r, \varphi) = \bs F(\zeta_1, \zeta_2)$. Using the representation \eqref{eq: Tensor-product form of F} of $ \bs F$ and the fact that $\left(g_1(\zeta_2)\right)^2 + \left(g_2(\zeta_2)\right)^2 =  1$ for all $\zeta_2 \in [0,1]$, see \cite{OhKimJeong2014}, it follows that
 \begin{align*}
 r = |\bs{\bs{\mathring{F}}}(r, \varphi)|= \abs{\bs F(\zeta_1, \zeta_2)} = \zeta_1 \left( \left(g_1(\zeta_2)\right)^{(2)} + \left(g_2(\zeta_2)\right)^{(2)}  \right) = \zeta_1 \, .
 \end{align*}
 Hence, the radial component $r$ in polar coordinates corresponds one-to-one with the isogeometric $\zeta_1$-component. However, the same does not apply for the angular components $\varphi$ and $\zeta_2$. Figure \ref{fig:parmeterizationpolar a} presents the isogeometric mappings $g_1$ and $g_2$ from representation \eqref{eq: Tensor-product form of F} in comparison with the scaled polar angular mappings $[0,1] \to \R, \ \varphi \mapsto \cos(2\pi \varphi)$ and $[0,1] \to \R, \ \varphi \mapsto \sin(2\pi \varphi)$. Figure \ref{fig:parmeterizationpolar b} depicts the difference of the corresponding functions. We observe that
 \begin{align*}
 \abs{g_1(\varphi) - \cos(2\pi\varphi)} \leq C \quad \text{ and } \quad \abs{g_2(\varphi) - \sin(2\pi\varphi)} \leq C \quad \text{ for all } \varphi \in [0,1] \, ,
 \end{align*}
 that is, the parameterizations coincide up to a maximal difference of $C \approx 0.015$. The mappings are even identical in a few points,
 \begin{align}
 \label{eq: equality polar and NURBS coordinates}
 g_1(\varphi) = \cos(2\pi\varphi) \quad \text{ and } \quad g_2(\varphi) = \sin(2\pi\varphi) \quad \text{for} \quad \varphi = \frac{i}{8}, i=0,\dots, 8 \, .
 \end{align}
 Furthermore, we can show another useful correlation. Let $K=\bs F(Q) \in \mathcal{M}$ be an element of the B\'ezier mesh. Then, $K$ can be represented in polar coordinates, that is, we can find $\mathring{Q} \subset (0,1) \times (0,2 \pi)$ such that 
 \begin{align*}
 K =  \bs F \left(Q \right) = \bs{\mathring{F}}\left(\mathring{Q}\right) \, .
 \end{align*}
 In more detail, if $Q = Q_{\bs j} = \left(\zeta_{1,j_1}, \zeta_{1,j_1+1}\right) \times \left(\zeta_{2,j_2}, \zeta_{2,j_2+1}\right)$ for some $\bs j \in \bs J$, there are polar angles $\varphi_{j_2}, \varphi_{j_2+1} \in [0,2\pi]$ such that $\mathring{Q} =  \left(\zeta_{1,j_1}, \zeta_{1,j_1+1}\right) \times \left(\varphi_{j_2}, \varphi_{j_2+1}\right)$. This can be proven using the bijectivity of the parameterizations away from the singular edge $\{(0,\zeta_2) : \zeta_2 \in [0,1]\}$.

\section*{Acknowledgments}
The authors thank Thomas Takacs, Alessandro Reali and Volker Kempf for motivating and insightful scientific discussions which helped to place the results of this paper within the existing literature context.

%% \section{}
%% \label{}

%% If you have bibdatabase file and want bibtex to generate the
%% bibitems, please use
%%
%%  \bibliographystyle{elsarticle-num} 
%%  \bibliography{<your bibdatabase>}

\biboptions{sort&compress}
\bibliographystyle{elsarticle-num-names} 
\bibliography{Bibliography.bib}

%% else use the following coding to input the bibitems directly in the
%% TeX file.

%\begin{thebibliography}{00}
%	
%
%	
%
%%\bibliographystyle{elsarticle-num} 
%%\bibliography{}
%
%%
%\end{thebibliography}
\end{document}